\newtheorem{theorem}{Theorem}
\newtheorem{definition}{Definition}
\newtheorem{proposition}{Proposition}
\newtheorem{lemma}{Lemma}
\newtheorem{corollary}{Corollary}
\newtheorem{exam}{Example}
\newtheorem{exams}{Examples}
\newtheorem{rmk}{Remark}
\newenvironment{remark}{\begin{rmk}\rm}{\end{rmk}}
\newtheorem{notat}{Notation}
\renewcommand{\ne}{\not =}
\newcommand{\diff}[2]{\mbox{{\rm Diff}{${\,}_{#1}({\mathbb C}^{#2},0)$}}}
\newcommand{\diffh}[2]{\mbox{$\widehat{\rm Diff}{{\,}_{#1}({\mathbb C}^{#2},0)}$}}
\newcommand{\cn}[1]{\mbox{\rm{(}${\mathbb C}^{#1},{\bf 0}$\rm{)}}}
\newcommand{\Xn}[2]{\mbox{${\mathfrak X}_{#1}\cn{#2}$}}
\newcommand{\Xf}[2]{\mbox{$\hat{\mathfrak X}_{#1}\cn{#2}$}}
\title[Completely Integrable Foliations]{Completely Integrable Foliations: Singular Locus, Invariant Curves and Topological Counterparts}
\author{Javier Ribón}
\address{Instituto de Matem\'{a}tica e Estat\'\i stica \\
Universidade Federal Fluminense\\
Campus do Gragoat\'a\\
Rua Marcos Valdemar de Freitas Reis s/n, 24210\,-\,201 Niter\'{o}i, Rio de Janeiro - Brasil }
\thanks{e-mail address: jribon@id.uff.br}
\thanks{MSC class. Primary: 32S65, 34A05; Secondary: 34C45,  32M25, 34M65, 34M35}
\thanks{Keywords: holomorphic foliations, first integrals, holonomy groups, invariant analytic varieties}
\begin{document}

\bibliographystyle{plain}

\maketitle
\tableofcontents  
\section*{Abstract} 
We study codimension $q \geq 2$ holomorphic foliations 
defined in a neighborhood of a point $P$ of a complex manifold that are  completely integrable, 
i.e.  with $q$ independent meromorphic first integrals.  
We show that either $P$ is a regular point, a non-isolated singularity or 
there are infinitely many 
invariant analytic varieties through $P$ of the same dimension as the foliation, the so called separatrices. 
Moreover, we see that this phenomenon is of topological nature. 

Indeed, we introduce topological counterparts of completely integrable local holomorphic foliations
and tools, specially the concept of total holonomy group, to build 
holomorphic first integrals if they have isolated separatrices. 
As a result, we provide a topological characterization 
of completely integrable non-degenerated elementary isolated singularities of vector fields 
with an isolated separatrix.
\section{Introduction}
We study completely integrable holomorphic foliations or, more precisely, foliations with 
the highest possible number of independent holomorphic (or meromorphic) first integrals.
Such number is the codimension of the foliation. In order to fix the setting of the paper, 
let us introduce the definition of completely integrable vector field.  
\begin{definition}
\label{def:complete}
A holomorphic foliation $\mathcal{F}$ of codimension $q$ is
completely integrable (CI) if there exist meromorphic first integrals $f_1, \ldots, f_q$  
of $\mathcal{F}$  such that $df_1\wedge \ldots \wedge df_q \not \equiv 0$.
We say that $(f_1, \ldots, f_q)$ is a complete first integral of $\mathcal{F}$. 
We say that $\mathcal{F}$ is $(CI)_{\mathcal{O}}$ if 
the definition holds for holomorphic $f_1, \ldots, f_q$.
\end{definition}

For example if $\mathcal{F}$ is given by a germ of vector field $X$, we are requiring
$X(f_1) = \ldots = X(f_{n-1}) =0$.  
The property $df_1\wedge \ldots \wedge df_q \not \equiv 0$ is the condition of functional independence of the first integrals.

This paper is devoted to the study of complete integrability for germs of holomorphic foliations and its relation with their topological properties.  
This program was initiated by Mattei and Moussu, who studied holomorphic complete integrability  $(\mathrm{CI})_{\mathcal{O}}$ 
in their seminal paper \cite{MaMo:Aen} for singularities of foliations of codimension $q=1$.
They proved that the existence of a non-trivial formal first integral is equivalent to the existence of an analytic one
and that the $(\mathrm{CI})_{\mathcal{O}}$ property is a topological invariant.
In spite of this, the CI property is not a topological invariant even for $q=1$ \cite[Suzuki]{Suzuki(int.pre)}. 
This paper treats the case $q \geq 2$; this is 
a current topic of research in which new phenomena appear. 
For instance $(\mathrm{CI})_{\mathcal{O}}$ is not a topological invariant for $q=2$ 
\cite[Pinheiro-Reis]{Pinheiro-Reis:topological}.

Let us remark that singularities of one dimensional complete integrable foliations have been studied from the point of view of 
normal forms \cite[Zhang]{Zhang:ana_norm1, Zhang:ana_norm2} \cite[Llibre-Pantazi-Walcher]{Llibre-Pantazi-Walcher:first_integrals}.

A priori, we could expect completely integrable foliations to be simple since 
the leaves are generically connected components of level sets of a complete first integral.
Moreover, a complete first integral determines the foliation. 
Nevertheless, the properties of CI foliations are subtle and it is not straightforward 
to obtain the geometrical properties of the foliation from a complete first integral.
 
As an example, consider $n=3$, $f_1 (x,y,z) = xz$, $f_{2} (x,y, z) = y z$. 
The $1$-form   
\[ \omega := df_{1} \wedge df_{2}= z (z dx \wedge dy + y dx \wedge dz - x dy \wedge dz) \]
determines the $1$-dimensional foliation ${\mathcal F}$ induced by the germ of vector field
\begin{equation}
\label{equ:ex1}
 X = x \frac{\partial}{\partial x} + y \frac{\partial}{\partial y} - z \frac{\partial}{\partial z}. 
 \end{equation}
that has isolated singularity whereas $\mathrm{cod} (\mathrm{Sing}(\omega))=1$.
This discrepancy between singular sets is independent of the choice of first integrals. 
Indeed, if we have two germs $\omega_1, \omega_2$ of holomorphic $1$-forms such that 
$\omega_{1} (X) = \omega_{2} (X) = 0$ and $\omega_{1} \wedge \omega_2 \neq 0$, 
we obtain that $\omega_1 \wedge \omega_2$ is a multiple of $z^{-1} \omega$ and thus singular at ${\bf 0}$. 
It is a well-known result that in such a case ${\bf 0}$ is a non-isolated singular point of
$\omega_1 \wedge \omega_2$  
\cite[Lemma 3.1.2]{Medeiros:singular}  \cite{Mal:Frob2} and hence we obtain 
$\mathrm{cod} ({\mathrm Sing}(\omega_{1} \wedge \omega_{2}))=1$.

We are interested in  the geometrical properties of germs of CI foliations ${\mathcal F}$.
First, we study two geometrically relevant sets associated to ${\mathcal F}$, namely 
the singular set $\mathrm{Sing}({\mathcal F})$ and the set of separatrices 
(the integral varieties of ${\mathcal F}$ whose closure contains ${\bf 0}$).
Next theorem reveals their transcendence, 
showing that at least one of the sets is ``big".
\begin{theorem}
\label{teo:dicritical}
Let $\mathcal{F}$ be a completely integrable germ of holomorphic foliation of 
codimension $q \geq 2$ 
in $\cn{n}$ with an isolated singularity at the origin.  
Then, there are infinitely many invariant analytic varieties of dimension
$\dim (\mathcal{F})$ through the origin.
\end{theorem}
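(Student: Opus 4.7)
The plan is to construct an $\mathcal{F}$-invariant hypersurface $H$ through $\mathbf{0}$ from the excess of $\mathrm{Sing}(df_1\wedge\cdots\wedge df_q)$ over $\mathrm{Sing}(\mathcal{F})=\{\mathbf{0}\}$, and then to induct on the codimension~$q$.

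I would fix a complete meromorphic first integral $(f_1,\ldots,f_q)$ and, after multiplying by a holomorphic denominator, view $\omega:=df_1\wedge\cdots\wedge df_q$ as a holomorphic $q$-form. Factor $\omega=h\,\omega'$ with $\omega'$ saturated, so that $\omega'$ defines $\mathcal{F}$ and $\mathrm{Sing}(\omega')=\{\mathbf{0}\}$ by hypothesis. The Medeiros--Malgrange-type result recalled in the introduction, applied to the integrable form $\omega$ with $q\geq 2$, forces $\mathrm{Sing}(\omega)$ to be non-isolated at~$\mathbf{0}$; since $\mathrm{Sing}(\omega)=\{h=0\}\cup\{\mathbf{0}\}$, this gives $h(\mathbf{0})=0$, and $H:=\{h=0\}$ is a germ of hypersurface through $\mathbf{0}$.

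The geometric heart of the argument is to show that $H$ is $\mathcal{F}$-invariant. Since $\omega$ is closed, differentiating $\omega=h\,\omega'$ yields $dh\wedge\omega'=-h\,d\omega'$, so $dh\wedge\omega'$ vanishes on $H$. At a smooth point $p\in H$ that is also a regular point of $\mathcal{F}$, $\omega'(p)$ is decomposable as a wedge of one-forms whose common kernel equals $T_p\mathcal{F}$; the identity $dh(p)\wedge\omega'(p)=0$ then places $dh(p)$ in the span of these one-forms, forcing $T_p\mathcal{F}\subseteq T_pH$. Thus every leaf of $\mathcal{F}$ meeting $H$ stays inside $H$.

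Once invariance is in hand, I would restrict $\mathcal{F}$ to an irreducible component $H_j$ of $H$ to obtain a CI foliation of codimension $q-1$ on $(H_j,\mathbf{0})$, still with $\mathbf{0}$ as an isolated singularity. The descended complete first integral comes from ratios of the $f_i$, which restrict nontrivially to $H_j$ (as in the introductory example where $f_1/f_2=x/y$ restricts to $H=\{z=0\}$ with indeterminacy at~$\mathbf{0}$). Proceed by induction on $q$: for $q-1\geq 2$ the theorem applies to $\mathcal{F}|_{H_j}$; at the base case $q-1=1$, the same Medeiros--Malgrange-type mechanism together with the isolated-singularity hypothesis forces the first integral $\phi$ of $\mathcal{F}|_{H_j}$ to be indeterminate at $\mathbf{0}$, so the level sets $\{\phi=c\}_{c\in\mathbb{P}^1}$ furnish infinitely many $(n-q)$-dimensional separatrices. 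The hard part is orchestrating this induction: verifying that $\mathcal{F}|_{H_j}$ genuinely has $\mathbf{0}$ as an isolated singularity (which may require resolving the singularities of $H_j$ when $H_j$ is not smooth), and that the ratios of the $f_i$ produce a complete first integral of $\mathcal{F}|_{H_j}$ with the essential indeterminacy needed to close the base case rather than a holomorphic extension yielding only one separatrix.
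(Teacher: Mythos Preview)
Your approach is direct and algebraic, in contrast to the paper's, which is topological: the paper shows that complete integrability forces closed leaves with finite holonomy (Propositions~\ref{prop:closedleaves} and~\ref{pro:finite}), then argues by contradiction, assuming finitely many separatrices and using holonomy along them to construct a \emph{holomorphic} complete first integral (Proposition~\ref{pro:submersiveintegrals}), and only at that point invokes Medeiros--Malgrange for a contradiction. Your outline has two genuine gaps. The first is that both the Medeiros--Malgrange lower bound on $\dim\{df_1\wedge\cdots\wedge df_q=0\}$ and the identity $dh\wedge\omega'=-h\,d\omega'$ underpinning the invariance of $H$ require $\omega$ to be a wedge of \emph{closed} $1$-forms; clearing denominators of meromorphic $f_i$ yields a holomorphic integrable $q$-form, but not one of the shape $d\tilde f_1\wedge\cdots\wedge d\tilde f_q$, so neither step is justified in the meromorphic setting. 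The paper sidesteps this precisely by manufacturing holomorphic first integrals through the holonomy construction before touching Medeiros.

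The second and more serious gap is that your induction does not close. After descending to codimension one on $H_j$, you need the restricted first integral $\phi$ to be indeterminate at~$\mathbf 0$, but for $q-1=1$ the Medeiros bound is vacuous and nothing rules out a holomorphic extension of $\phi$, which would give only finitely many separatrices on $H_j$. You correctly flag this as ``the hard part'' but do not resolve it; in fact it is essentially the $q=2$ case of the theorem itself, so the induction has not reduced the difficulty. (The paper does establish, as a \emph{corollary} of the main result in subsection~\ref{subsection:infinite_ci}, that in the $q=2$ holomorphic case there is a dicritical hypersurface carrying a purely meromorphic first integral --- but that uses the theorem rather than proving it.) The subsidiary issues you raise, about $H_j$ possibly being singular at $\mathbf 0$ and about producing $q-1$ independent first integrals on $H_j$, are also nontrivial; for the latter the relevant tools are Proposition~\ref{pro_basica} and Lemma~\ref{lem:pure_meromorphic}, which the paper uses for a related but different purpose (closedness of leaves) rather than for an inductive descent.
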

An alternative statement is that if
if ${\mathcal F}$ is a CI germ that is singular at ${\bf 0}$ and $q \geq 2$, 
we obtain that either $\dim (\mathrm{Sing}({\mathcal F})) \geq 1$ or there are infinitely
many separatrices.  
For instance, in example \eqref{equ:ex1} the origin is an isolated singularity  and there are infinitely many separatrices, 
indeed any line through the origin in the plane $z=0$ is a separatrix. 
Of course, there are also examples of CI foliations with finitely many separatrices and non-isolated singularities 
(cf. Remark \ref{rem:tci_non_iso}).

Theorem \ref{teo:dicritical} is a result intrinsic of codimension $q \geq 2$. 
Indeed, for $q=1$, there are germs of function $f$, for instance $f(x,y)=xy$, such that 
$f=0$ has an isolated singularity. In such a case $df =0$ has finitely many separatrices, namely 
the irreducible components of $f=0$.

Theorem \ref{teo:dicritical} extends a theorem of 
Pinheiro and Reis \cite{Pinheiro-Reis:topological} 
where they suppose that $n=3$, $\mathcal{F}$ is $(\mathrm{CI})_{\mathcal{O}}$ and 
made additional hypothesis on the existence of reduction of singularities 
after a single blow-up. Let us remark that their techniques rely on desingularization theory and index arguments whereas
ours are of topological nature. 

Next result is an immediate corollary of Theorem \ref{teo:dicritical}.
\begin{corollary}
Consider germs $f_1, \hdots, f_q$ of holomorphic function defined in a neighborhood of ${\bf 0}$ in ${\mathbb C}^{n}$
such that $(f_1, \hdots, f_q)^{-1} ({\bf 0})$ has pure codimension $q$. 
Then, either ${\mathcal F}$ is regular at ${\bf 0}$ or $\dim (\mathrm{Sing}({\mathcal F})) \geq 1$, where 
${\mathcal F}$ is the foliation with complete first integral $(f_1, \hdots, f_q)$.
\end{corollary}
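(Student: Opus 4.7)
The plan is to deduce the corollary from Theorem \ref{teo:dicritical} by contradiction. Note that the statement implicitly requires $q \geq 2$: for $q=1$, the germ $f_1(x,y) = xy$ has a codimension-one fibre and an isolated critical point, which would contradict the conclusion. With $q \geq 2$ in place, I would first normalise by replacing each $f_i$ with $f_i - f_i({\bf 0})$, which does not alter $\mathcal{F}$, so that $f_i({\bf 0}) = 0$. Then assume for contradiction that $\mathcal{F}$ is singular at ${\bf 0}$ yet $\dim(\mathrm{Sing}(\mathcal{F})) = 0$. In particular, ${\bf 0}$ is an isolated singularity, so Theorem \ref{teo:dicritical} yields infinitely many separatrices of $\mathcal{F}$ through ${\bf 0}$.

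Next I would confine those separatrices inside the zero fibre. Set $V := (f_1,\hdots,f_q)^{-1}({\bf 0})$, viewed as a germ at ${\bf 0}$, which by hypothesis is analytic of pure dimension $n-q$. Any separatrix $S$ is an irreducible analytic germ of dimension $\dim(\mathcal{F}) = n-q$ that is invariant by $\mathcal{F}$. Because each $f_i$ is a first integral, it is constant on the leaves that foliate the regular part of $S$, and hence constant on $S$ itself by continuity and irreducibility; this constant must be $f_i({\bf 0}) = 0$. Therefore every separatrix through ${\bf 0}$ is contained in $V$.

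Finally a dimension count closes the argument: a separatrix $S$ is an irreducible analytic subset of $V$ of the same dimension $n-q$ as every component of $V$, so $S$ must coincide with one of the irreducible components of the germ $V$ at ${\bf 0}$. Since analytic germs have only finitely many irreducible components, only finitely many separatrices can occur, contradicting Theorem \ref{teo:dicritical}. The only delicate point, and hence the \emph{main obstacle} if any, is translating the geometric hypothesis that $V$ has pure codimension $q$ into the algebraic statement that a maximal-dimensional irreducible analytic subgerm of $V$ must be one of its components; everything else is purely formal.
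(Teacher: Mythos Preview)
Your argument is correct and is precisely the paper's one-sentence proof (``the hypothesis on $(f_1,\hdots,f_q)^{-1}(\mathbf{0})$ implies that $\mathcal{F}$ has finitely many separatrices'') with the details filled in. One small caveat: the normalisation $f_i \mapsto f_i - f_i(\mathbf{0})$ shifts which fibre the pure-codimension hypothesis refers to, so rather than normalising you should simply note that the statement tacitly assumes $f_i(\mathbf{0}) = 0$ (otherwise the separatrices lie in the fibre through $\mathbf{0}$, not the zero fibre, and the argument would not go through).
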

Indeed, the hypothesis on $(f_1, \hdots, f_q)^{-1} ({\bf 0})$ implies that ${\mathcal F}$ has finitely many separatrices.

\strut

Our techniques are of topological kind and as a consequence of their flexibility 
we can give topological versions of our results. 
We introduce the topological counterpart of completely integrable foliations.
\begin{definition}
\label{def:tci}
Let $\mathcal{F}$ be a germ of holomorphic foliation 
in $\cn{n}$ with isolated singularity. 
We say that ${\mathcal F}$ is topologically completely integrable (TCI) in $\overline{\operatorname{B}}(\mathbf{0}; \epsilon_{0})$
if ${\mathcal F}$ has a holomorphic representative 
defined in a neighborhood of $\overline{\operatorname{B}}(\mathbf{0}; \epsilon_{0})$ and 
\begin{itemize}
\item $\mathrm{Sing}({\mathcal F}) \cap \overline{\operatorname{B}}(\mathbf{0}; \epsilon_{0}) = \{ {\bf 0 } \}$;
\item every leaf $\mathcal{L}$ of the foliation $\mathcal{F}^{\epsilon_{0}}$
induced by $\mathcal{F}$ in 
$\operatorname{B}(\mathbf{0}; \epsilon_{0})$ is closed in 
$\operatorname{B}(\mathbf{0}; \epsilon_{0}) \setminus \{ \bf{0} \}$; 
\item given any closed leaf $\mathcal{L}$ of 
$\mathcal{F}^{\epsilon_{0}}$, its holonomy group is finite;
\item $\mathcal{L}$ is transverse to 
$\partial\operatorname{B}(\mathbf{0}; \epsilon_{0})$ for
any leaf $\mathcal{L}$ of $\mathcal{F}^{\epsilon_{0}}$ 
with $\bf{0} \in \overline{\mathcal{L}}$;
\end{itemize}
We say that ${\mathcal F}$ is topologically completely integrable (TCI) if in some coordinate system centered at ${\bf 0}$, 
given any $\epsilon_{0} >0$ there exists 
$\epsilon \in (0, \epsilon_{0})$ such that ${\mathcal F}$ is topologically integrable
in $\overline{\operatorname{B}}(\mathbf{0}; \epsilon)$.
\end{definition}
Let us remark that the choice of balls centered at ${\bf 0}$ as domains of study is unimportant. 
We could consider more general domains with smooth boundaries.

All conditions in the definition of TCI are of topological nature except transversality. 
Anyway, TCI is inherently a topological concept.
Indeed, we can give an entirely topological definition of TCI by replacing 
transversality with topological transversality in the fourth bullet point of Definition \ref{def:tci}.
Our constructions can be carried out with any of the definitions of TCI. 
We consider Definition \ref{def:tci} for simplicity.

The next theorem is a topological version of Theorem \ref{teo:dicritical}.
\begin{theorem}
\label{teo:infinite}
Let $\mathcal{F}$ be a germ of holomorphic foliation of 
codimension $q \geq 2$ 
in $\cn{n}$ with an isolated singularity at the origin. Suppose that
there exists an open ball $\operatorname{B}(\mathbf{0}; \epsilon)$ such that 
\begin{itemize}
\item every leaf $\mathcal{L}$ of the foliation $\mathcal{F}^{\epsilon}$ 
induced by $\mathcal{F}$ in 
$\operatorname{B}(\mathbf{0}; \epsilon)$ is closed in 
$\operatorname{B}(\mathbf{0}; \epsilon) \setminus \{ \bf{0} \}$ and
\item given any closed leaf $\mathcal{L}$ of 
$\mathcal{F}^{\epsilon}$, its holonomy group is finite.
\end{itemize}
Then, there are infinitely many invariant analytic varieties of dimension
$\dim (\mathcal{F})$ through the origin.
\end{theorem}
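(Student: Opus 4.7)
The plan is to argue by contradiction. Suppose the set of separatrices through $\mathbf{0}$ is finite and let $S$ denote their union. Since each separatrix has dimension $\dim(\mathcal{F})=n-q$, the analytic set $S$ has complex codimension at least $q\geq 2$ in $\operatorname{B}(\mathbf{0};\epsilon)$. Set $U := \operatorname{B}(\mathbf{0};\epsilon)\setminus (S\cup\{\mathbf{0}\})$. Any leaf $\mathcal{L}$ of $\mathcal{F}^{\epsilon}$ meeting $U$ is not a separatrix, and being closed in $\operatorname{B}(\mathbf{0};\epsilon)\setminus\{\mathbf{0}\}$ it does not approach the origin; hence it is closed in $U$ itself and, by hypothesis, has finite holonomy.

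The first step is to construct local holomorphic first integrals of $\mathcal{F}$ around every leaf $L\subset U$. Closedness together with finiteness of the holonomy group $H_L$ should yield a Reeb--Thurston-type stability statement adapted to the holomorphic higher-codimension setting, analogous to the integrability theorem of Mattei and Moussu: a saturated open neighborhood of $L$ is biholomorphic to the quotient of $L\times D^{q}$ by a free action of $H_L$ on the transversal, and the projection onto $D^{q}/H_L$ is a local first integral whose fibers are the leaves of $\mathcal{F}$.

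The second step is to glue these local quotient models into a single-valued complete first integral on the whole ball. Since $S\cup\{\mathbf{0}\}$ has complex codimension at least $2$, the open set $U$ is simply connected and Hartogs' extension applies to any bounded holomorphic function on $U$. The idea is to exploit simple connectedness, together with the finiteness of the local orbifold structures, to trivialize the $H_L$-ambiguities and produce single-valued holomorphic components $f_{1},\ldots,f_{q}\in\mathcal{O}(U)$ satisfying $df_{1}\wedge\cdots\wedge df_{q}\not\equiv 0$ and with level sets equal to the leaves; Hartogs extension then carries them across $S\cup\{\mathbf{0}\}$ to holomorphic first integrals on $\operatorname{B}(\mathbf{0};\epsilon)$. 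By construction, $F:=(f_{1},\ldots,f_{q})$ satisfies that $F^{-1}(\mathbf{0})$ is an analytic subvariety of pure codimension $q$ at the origin, containing the (finitely many) separatrices as its top-dimensional components.

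The contradiction then follows the mechanism illustrated by example~\eqref{equ:ex1}. The $q$-form $\omega := df_{1}\wedge\cdots\wedge df_{q}$ is decomposable and defines $\mathcal{F}$ off its singular set; by the Medeiros--Malgrange result cited in the introduction, for $q\geq 2$ such a decomposable form cannot have an isolated zero at a singular point of $\mathcal{F}$, so we may write $\omega = g\,\omega'$ with $g$ a non-unit holomorphic factor and $\omega'$ a defining form of $\mathcal{F}$ with strictly smaller singular set. The hypersurface $\{g=0\}$ must then be $\mathcal{F}$-invariant and saturated by invariant subvarieties of dimension $\dim(\mathcal{F})$, producing infinitely many separatrices and contradicting the standing assumption. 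The hardest step of the argument is the globalization in the third paragraph: although $U$ is simply connected, passing from the local $H_L$-quotient charts to genuinely single-valued holomorphic first integrals on $U$ requires carefully matching the transverse structures along paths between leaves and coherently controlling the finite holonomy representations across the whole leaf space.
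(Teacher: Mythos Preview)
Your overall architecture matches the paper's: assume finitely many separatrices, build a holomorphic complete first integral on a ball, and derive a contradiction from the singular locus of $\omega=df_1\wedge\cdots\wedge df_q$ via the Medeiros--Malgrange bound. However, two steps are genuine gaps rather than details.

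First, the globalization you flag as ``the hardest step'' is not just hard, it is where the main idea lives, and your outline does not contain it. Trying to glue local quotient charts $D^q/H_L$ is problematic because the target varies with the leaf and there is no canonical identification between charts along paths in the leaf space. The paper sidesteps this completely: it first proves that at least one separatrix exists, fixes a \emph{single} transversal $T$ through a point $O$ on a separatrix, and shows (via the Reeb-type stability in Proposition~\ref{pro:stability} and Proposition~\ref{pro:saturated}) that every leaf in an invariant neighborhood of $\Sigma$ meets $T$. The candidate first integrals are then the holonomy maps $\theta_1,\ldots,\theta_k$ from leaves to $T$; these are finitely many, are submersions by construction, and become single-valued on a small ball because $\operatorname{B}_\epsilon\setminus\Sigma$ is simply connected. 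Extension across $\Sigma$ is by sending $\Sigma$ to $O$ and invoking Riemann extension, not Hartogs. The fixed target $T$ is exactly what removes the gluing problem you describe.

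Second, your contradiction paragraph is logically inverted. If your $F$ has level sets equal to leaves on $U$, then $F$ is a submersion on $U$, so $\mathrm{Sing}(\omega)\subset S\cup\{\mathbf 0\}$, which has dimension at most $n-2$. Since $\mathcal F$ is regular off $\mathbf 0$, locally there $\omega=h\cdot dx_1\wedge\cdots\wedge dx_q$, so $\mathrm{Sing}(\omega)\setminus\{\mathbf 0\}$ is either empty or a hypersurface; combined with the previous inclusion it must be empty, hence $\mathrm{Sing}(\omega)=\{\mathbf 0\}$ and $g$ is a \emph{unit}, not a non-unit as you claim. The contradiction is then immediate from Medeiros--Malgrange, which gives $\dim\mathrm{Sing}(\omega)\geq q-1\geq 1$. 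Your alternative route through an invariant hypersurface $\{g=0\}$ ``full of separatrices'' is both unreachable (because $g$ is a unit) and, even if $g$ were non-unit, would require a separate argument that generic leaves in $\{g=0\}$ accumulate at $\mathbf 0$.
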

This result implies that the conditions in the Mattei-Moussu's topological criterium for the existence of 
holomorphic first integrals for germs of holomorphic foliations of codimension $q=1$ \cite{MaMo:Aen}, namely 
closedness of leaves, finiteness of holonomy groups and the existence of finitely many separatrices never happen 
for codimension $q \geq 2$. 
Let us remark that even if finiteness of holonomy groups is not explicitly required in the criterium, closedness of leaves
implies such a property for $q=1$, something that is no longer valid for $q=2$ in general.

Indeed, we will see that both hypotheses in Theorem \ref{teo:infinite}
are satisfied by a CI germ of foliation and thus Theorem \ref{teo:dicritical}
becomes a corollary of Theorem \ref{teo:infinite}.
Moreover,  a vector field satisfying the hypothesis of 
Theorem \ref{teo:infinite} and with finitely many separatrices is topologically completely integrable and 
hence it suffices to understand such a case to show Theorem \ref{teo:infinite}.
 
 \strut
 
 We explore the connection between topological complete integrability and complete integrability. 
 Indeed, we show that TCI implies $(\mathrm{CI})_{\mathcal{O}}$ if there exists an isolated separatrix. 
 Moreover, the TCI property provides relevant geometrical properties, since, for instance,    it implies the existence of 
 a dicritical variety of  codimension $1$.
\begin{theorem}
\label{teo:exist_cod1}
Let $\mathcal{F}$ be a germ of holomorphic foliation 
of codimension $q \geq 2$  that is TCI in 
$\overline{\operatorname{B}}(\mathbf{0}; \epsilon_{0})$
and such that $\mathcal{F}^{\epsilon_{0}}$ has an isolated separatrix.
Then $\mathcal{F}$ is $(CI)_{\mathcal{O}}$ and there exists a $\mathcal{F}$-dicritical  variety
of codimension $1$, i.e. an invariant variety $S$ in which all leaves $\mathcal{L}$ of 
$\mathcal{F}^{\epsilon_{0}}$ through points in $S \setminus \{ \bf{0} \}$ are 
non-closed in $\operatorname{B}(\mathbf{0}; \epsilon_{0})$. 
\end{theorem}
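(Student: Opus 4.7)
The plan proceeds in three stages. First, construct a holomorphic complete first integral on a saturated neighborhood of the isolated separatrix $S_{0}$ using its finite holonomy. Second, extend this first integral to the whole ball by quotient-plus-removable-singularity arguments, giving $(CI)_{\mathcal{O}}$. Third, use the infinitude of separatrices provided by Theorem~\ref{teo:infinite} to locate a codimension $1$ invariant hypersurface inside the exceptional fiber of the first integral on which every leaf accumulates at $\mathbf{0}$.

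For the local construction, put $L_{0} := S_{0} \setminus \{\mathbf{0}\}$. By the TCI hypothesis $L_{0}$ is a closed leaf of $\mathcal{F}^{\epsilon_{0}}$ whose holonomy group $G$ is finite; represent it on a holomorphic transversal $T$ of dimension $q$ through a regular point $p \in L_{0}$. Since $G$ is finite, the quotient $T/G$ is a normal analytic germ of dimension $q$ and, by classical results on finite-group quotients, embeds holomorphically into some $\mathbb{C}^{N}$ via $G$-invariant coordinates $\Phi = (\Phi_{1}, \ldots, \Phi_{N})$. The isolated-separatrix hypothesis, combined with closedness and finiteness of holonomy of nearby leaves, allows a local Reeb-stability-type argument to identify $\mathcal{F}$ on a saturated neighborhood $V$ of $L_{0}$ with the suspension of the $G$-action on $T$. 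Consequently, the leaf projection descends to a holomorphic map $\pi_{\mathcal{F}}\colon V \to T/G$, and $F := \Phi \circ \pi_{\mathcal{F}}\colon V \to \mathbb{C}^{N}$ is a holomorphic first integral whose fibers are leaves and which admits $q$ functionally independent components.

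To extend $F$ to $\operatorname{B}(\mathbf{0}; \epsilon_{0})$, shrink the embedding of $T/G$ so that $F$ is bounded. Outside $V$, every leaf remains closed in the punctured ball with finite holonomy by TCI, so the same quotient-by-holonomy construction yields holomorphic maps on saturated neighborhoods of the other leaves; uniqueness of analytic continuation along leaves patches them into a single bounded holomorphic $F$ on $\operatorname{B}(\mathbf{0}; \epsilon_{0}) \setminus \{\mathbf{0}\}$. Because $n \geq q + 1 \geq 3$, Riemann's removable singularity theorem extends $F$ holomorphically across $\mathbf{0}$. Functional independence survives the extension, so any $q$ independent components form a holomorphic complete first integral $(f_{1}, \ldots, f_{q})$ of $\mathcal{F}$, establishing $(CI)_{\mathcal{O}}$.

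For the codimension $1$ dicritical variety, set $\Sigma := F^{-1}(F(\mathbf{0}))$, which is $\mathcal{F}$-invariant, analytic, and contains every separatrix (by continuity and constancy of $F$ along leaves). Theorem~\ref{teo:infinite}, whose hypotheses hold for any TCI germ with isolated singularity, yields infinitely many separatrices, but $\Sigma$ has finitely many irreducible components, so some irreducible component $W$ of $\Sigma$ satisfies $\dim W > n - q$ and contains infinitely many separatrices. A careful analysis using the TCI structure of $\mathcal{F}|_{W}$ shows that every leaf through $W \setminus \{\mathbf{0}\}$ accumulates at $\mathbf{0}$; if $\dim W = n - 1$ then $S := W$ is the desired dicritical variety, otherwise one takes $S$ to be a codimension $1$ irreducible component of $\{F_{i} = F_{i}(\mathbf{0})\}$ containing $W$ for a suitably chosen coordinate $F_{i}$, and verifies that the enlargement preserves the dicritical property. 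The main obstacle is the extension step: carefully patching the locally-defined quotient maps across the complement of $V$ requires using the isolated-separatrix hypothesis together with the TCI transversality to control the geometry of the non-saturated locus, and a secondary obstacle is the codimension-bumping step, which needs detailed information about the restriction $\mathcal{F}|_{S}$.
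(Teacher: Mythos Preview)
Your proposal has two genuine gaps, one in each of Stages~2 and~3.

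\textbf{Stage 2 (extension).} The patching argument does not work as stated. For a leaf $\mathcal{L}$ outside your saturated neighborhood $V$ of the isolated separatrix, the local quotient $T'/G'$ you build has no canonical identification with $T/G$; ``uniqueness of analytic continuation along leaves'' only propagates a function along a single leaf, it does not tell you how to match the \emph{values} assigned to distinct leaves. The paper avoids this entirely: rather than working near the single separatrix $S_{0}$ and then extending, it takes an $\mathfrak{F}^{\epsilon_{0}}$-invariant neighborhood $U$ of the \emph{whole} separatrix set $\Sigma$ and proves (as in Proposition~\ref{pro:saturated}) that the saturation of a single small transversal $T$ through $\Gamma$ already fills $U\setminus\Sigma$. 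The holonomy maps $\theta_{1},\ldots,\theta_{k}$ along leaves back to $T$ are then globally defined on $U\setminus\Sigma$, and their restrictions to $T$ form a finite group $\mathcal{H}$ (the \emph{total holonomy group}, which in general is larger than the holonomy of $\Gamma$ alone). Functions on $T$ invariant under $\mathcal{H}$ extend by leaf-constancy and Rado's theorem to holomorphic first integrals on $U\cap\operatorname{B}_{\epsilon_{0}}$, which already contains a full neighborhood of $\mathbf{0}$; no global extension to the ball is needed for $(\mathrm{CI})_{\mathcal{O}}$.

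\textbf{Stage 3 (codimension $1$).} Invoking Theorem~\ref{teo:infinite} gives you infinitely many separatrices, hence some irreducible component $W$ of $\Sigma$ with $\dim W\geq n-q+1$; but for $q\geq 3$ this falls short of $n-1$, and your ``codimension-bumping'' step (replacing $W$ by a level hypersurface of one $F_{i}$) has no reason to preserve the property that every leaf is a separatrix. The paper's argument is quite different and does not go through Theorem~\ref{teo:infinite}. It argues by contradiction: if $\mathrm{cod}(\Sigma)\geq 2$, then for small $\epsilon$ the set $\operatorname{B}_{\epsilon}\setminus\Sigma$ is simply connected, so the multivalued holonomy maps $\theta_{i}$ become single-valued holomorphic submersions $\operatorname{B}_{\epsilon}\to T$. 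Writing $\theta_{1}=(f_{1}',\ldots,f_{q}')$, the $q$-form $\omega=df_{1}'\wedge\cdots\wedge df_{q}'$ then has $\mathrm{Sing}(\omega)\subset\Sigma$, forcing $\mathrm{Sing}(\omega)=\{\mathbf{0}\}$; but Medeiros' lemma \cite[Lemma~3.1.2]{Medeiros:singular} (cf.\ \cite{Mal:Frob2}) gives $\dim\mathrm{Sing}(\omega)\geq q-1\geq 1$, a contradiction. This dimension bound is the key ingredient your approach is missing.
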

In order to produce holomorphic first integrals, we can not apply 
the Theorem of Frobenius with Singularities \cite{Mal:Frob2}. Indeed, even if 
 $\mathrm{cod} ({\mathrm Sing}({\mathcal F})) \geq 3$, the foliation ${\mathcal F}$ 
 does not necessarily satisfy its hypotheses. It can happen that 
 $\mathrm{cod} ({\mathrm Sing}(\omega_{1} \wedge \ldots \wedge \omega_{q}))=1$
 as in  \eqref{equ:ex1} for any $q$-uple of $1$-forms $\omega_1, \ldots, \omega_q$
 determining ${\mathcal F}$.
 Moreover, the hypotheses of the Theorem of Frobenius with Singularities never hold true for
 (singular at ${\bf 0}$) foliations of dimension $1$ since 
 $\mathrm{cod} ({\mathrm Sing}(\omega_{1} \wedge \ldots \wedge \omega_{n-1})) \leq 2$ in such a case
\cite[Lemma 3.1.2]{Medeiros:singular}.

We exploit the transversality of separatrices to obtain a complete first integral. 
Let us remark that ideas of this kind were used by Moussu to provide a topological proof of the topological criterium of existence of 
holomorphic first integrals for germs of codimension $1$ holomorphic foliations \cite{Moussu:exist_first_integrals}.

We introduce a relevant concept on its own: the  
{\it total holonomy group} (Definition \ref{def:total}). 
It is, essentially, a pseudo-group of holonomy that turns out to be a finite group of biholomorphisms with a common fixed point
for the setting of Theorem \ref{teo:exist_cod1}. Invariant functions by the action of this group induce
first integrals of the foliation. The total holonomy group can be defined for certain 
CI foliations with non-isolated singularities (cf. Remark \ref{rem:tci_non_iso}, where we consider 
a germ of codimension $2$ foliation in ${\mathbb C}^{4}$) and it will be fundamental 
for their study,  that will be treated in future work in collaboration with R. Rosas.
It is great news that the scope of our techniques is not limited to isolated singularities
since such a property is in some cases very restrictive, 
for example, it is not known if there is a germ of codimension two foliation in ${\mathbb C}^{4}$ with
isolated singularity (cf. \cite{Cerveau-Lins_Neto:cod2}).

Theorem \ref{teo:exist_cod1} can be greatly improved in the case of foliations of dimension $1$ with an isolated 
separatrix. Then we characterize all germs of foliations that are topologically 
completely integrable. 

\begin{theorem}
\label{teo:characterization}
Let $\mathcal{F}$ be a germ of one-dimensional  foliation in $\cn{n}$ ($n \geq 3$).
Then $\mathcal{F}$
is analytically conjugated to the foliation given by a germ of vector field of the form 
\begin{equation}
\label{equ:normal_form}
 \sum_{j=1}^{n} \lambda_{j} x_j \frac{\partial}{\partial x_j}, \ \  
\lambda_1, \ldots, \lambda_{n-1} \in {\mathbb Z}^{-}, \ \ \lambda_{n} \in {\mathbb Z}^{+},   
\end{equation} 
if and only ${\mathcal F}$ is topologically completely integrable and has an isolated separatrix.
\end{theorem}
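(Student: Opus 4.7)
The plan is to prove each implication separately; the converse is the substantive direction.

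For the ``$\Leftarrow$'' direction I would verify TCI and isolation of the separatrix directly on the model $X_{\mathrm{lin}} = \sum_{j=1}^n \lambda_j x_j \partial/\partial x_j$ with $\lambda_1,\dots,\lambda_{n-1} \in \mathbb{Z}^-$ and $\lambda_n \in \mathbb{Z}^+$. The holomorphic monomials $f_j = x_j^{\lambda_n} x_n^{-\lambda_j}$ ($j<n$) form a complete first integral, exhibiting every leaf as an algebraic curve. Homogeneity of $X_{\mathrm{lin}}$ yields transversality to every sphere centered at $\mathbf{0}$; the holonomy along $S = \{x_1=\cdots=x_{n-1}=0\}$ is cyclic of order $\lambda_n$, and the holonomies along the other separatrices (all lying in the invariant hyperplane $\{x_n=0\}$, where $X_{\mathrm{lin}}$ restricts to a linear vector field with rational eigenvalue ratios) are likewise finite. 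Isolation of $S$ follows because every separatrix distinct from $S$ is contained in $\{x_n=0\}$, which is disjoint from a thin tube around $S\setminus\{\mathbf{0}\}$.

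For the ``$\Rightarrow$'' direction the strategy is to recover a linear model from the holonomy data along $S$ and then use the first integrals supplied by Theorem~\ref{teo:exist_cod1} to promote it to a genuine analytic conjugation. I would first invoke Theorem~\ref{teo:exist_cod1} to conclude that $\mathcal{F}$ is $(\mathrm{CI})_{\mathcal{O}}$, obtaining holomorphic first integrals $f_1,\dots,f_{n-1}$ and a codimension one dicritical variety. Straightening $S$ to a coordinate axis and using $\pi_1(S\setminus\{\mathbf{0}\})\cong\mathbb{Z}$ together with the TCI hypothesis, the holonomy of $S$ is a finite cyclic subgroup of $\mathrm{Diff}(\mathbb{C}^{n-1},\mathbf{0})$; Cartan's linearization diagonalizes its generator as $\mathrm{diag}(\mu_1,\dots,\mu_{n-1})$ with $\mu_j = e^{2\pi i \lambda_j/\lambda_n}$, and after rescaling the field one may take $\lambda_1,\dots,\lambda_n\in\mathbb{Z}$ with $\gcd=1$. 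Combining the linearized holonomy with the holomorphic first integrals produces an analytic equivalence between $\mathcal{F}$ and the linear foliation of $\sum\lambda_j x_j\partial/\partial x_j$ in a punctured tubular neighborhood of $S$; the natural framework is the total holonomy group of Definition~\ref{def:total}, which is finite in this setting, and on the associated finite branched cover the foliation is given by true normal-form first integrals whose quotient gives the equivalence off $\mathbf{0}$. Finally, the sign pattern is forced: if some $\lambda_j$ had the same sign as $\lambda_n$, the invariant $(x_j,x_n)$-plane would carry a one-parameter algebraic family of separatrices accumulating on $S$, contradicting its isolation; up to replacing $X$ by $-X$ we get $\lambda_n>0>\lambda_1,\dots,\lambda_{n-1}$.

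The principal obstacle is to extend the equivalence constructed off $\mathbf{0}$ to a biholomorphism at the origin. This is where the full strength of the TCI hypothesis is used: closedness of every leaf in the punctured ball, finiteness of the holonomy of every closed leaf, and transversality to spheres together ensure that the leaf space is Hausdorff and that the total holonomy group closes up as a genuine finite group action. Combined with the holomorphic first integrals from Step~1, this rules out ramification phenomena at $\mathbf{0}$ and upgrades the straightening along $S$ to a holomorphic linearization of the vector field generating $\mathcal{F}$ on a whole ball around the origin.
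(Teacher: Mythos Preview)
Your sufficient direction is fine and matches the paper's Lemma~\ref{lem:suff}. The necessary direction, however, has two genuine gaps.

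\textbf{First gap: non-nilpotency of $D_{\mathbf 0}X$.} You write ``Cartan's linearization diagonalizes its generator as $\mathrm{diag}(\mu_1,\dots,\mu_{n-1})$ with $\mu_j=e^{2\pi i\lambda_j/\lambda_n}$, and after rescaling the field one may take $\lambda_1,\dots,\lambda_n\in\mathbb{Z}$.'' This presupposes that $X$ already has a diagonal linear part with a nonzero eigenvalue along the isolated separatrix $S$; only then does the holonomy of $S$ compute the exponentials of eigenvalue ratios. A~priori $D_{\mathbf 0}X$ could be nilpotent, $S$ could be singular, and the finite holonomy of $S$ (which you do get from TCI) would not by itself produce eigenvalues of $X$. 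The paper devotes Proposition~\ref{pro:non-nil} to exactly this point: it picks a non-isolated separatrix $\Gamma_\circ$ inside the dicritical hypersurface supplied by Theorem~\ref{teo:exist_cod1}, uses Mumford's description of the link of a normal surface singularity to control the homotopy class of a certain ${\mathbb S}^1$-valued map, and then applies a Poincar\'e--Hopf index computation on a Puiseux disc to force a nonzero eigenvalue. This step is not optional and your outline does not supply a substitute.

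\textbf{Second gap: from linearized holonomy to linearized foliation.} Your final paragraph correctly identifies the extension across $\mathbf 0$ as the principal obstacle, but the resolution you sketch (``closedness, finiteness of holonomy and transversality rule out ramification phenomena at $\mathbf 0$'') is the assertion to be proved, not an argument. Once non-nilpotency is known, the paper closes the proof by invoking Zhang's analytic normal form theorem for CI vector fields with a nonzero eigenvalue, which yields $X=(1+g)X_0$ analytically and also forces $\mathrm{rank}\,M_\lambda=n-1$, hence integer eigenvalues. The appendix gives an alternative route closer in spirit to yours (total holonomy group and invariant functions), but it still requires a substantial intermediate step, Proposition~\ref{pro:formal_trans}, establishing formal transversality of the conjugacy along $S^-\cup S^+$; without that control near the stable hypersurface, the first integrals built from $\langle\Theta\rangle$-invariants need not extend holomorphically. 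Your proposal contains no analogue of either Zhang's theorem or the formal-transversality computation, so the passage from holonomy data to an analytic linearization on a full ball remains unjustified.
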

There are several instances in the literature in which transversality 
of a foliation ${\mathcal F}$ of dimension or codimension $1$ with the boundary of a domain 
imposes strong constrains
on ${\mathcal F}$ \cite{Ito:Poincare-Bendixson}  \cite{Brunella-Sad:hol_in_convex} \cite{Brunella:transverse}
\cite{Ito-Scardua:fol_cod_1_transverse} \cite{Ito-Scardua:non-existence-cod1} 
\cite{Bracci-Scardua:vector_fields_transverse} \cite{Ito-Scardua:non-existence-Morse} 
as it happens in Theorem \ref{teo:characterization}.
In contrast with such results, we do not impose transversality in the whole boundary and just at the separatrices
making our approach closer to Moussu's in  \cite{Moussu:exist_first_integrals}.

We discuss CI foliations in section \ref{sec:ci}. Section \ref{sec:infinite} is devoted to the proof of Theorem \ref{teo:infinite}.
The proof of Theorem \ref{teo:exist_cod1} and in particular of the existence of a dicritical hypersurface is carried out in 
section \ref{sec:cod1}. It is introduced also the concept of {\it total holonomy group}. 
We provide a characterization of TCI germs of vector field, with an isolated separatrix,  in section  \ref{sec:tci}.

\section{Completely integrable foliations}
\label{sec:ci}
First, let us introduce the results that allow to reduce 
the proof of Theorem \ref{teo:dicritical} to the proof 
of Theorem \ref{teo:infinite}.
\subsection{Closedness of Leaves}
The purpose of this subsection is proving that all leaves of a germ of completely integrable foliation
are closed off $ \mathrm{Sing}(\mathcal{F})$. 
This is obvious for leaves $\mathcal{L}_P$ through generic points
since in such a case $\overline{\mathcal{L}}_P$ is an irreducible component of the level set 
of a complete first integral $(f_1, \ldots, f_q)$ containing $P$ such that 
$\dim ( \overline{\mathcal{L}}_P) = \dim (\mathcal{F})$ and 
$\overline{\mathcal{L}}_P \setminus \mathrm{Sing} (\mathcal{F}) = \mathcal{L}_P$.
Since it is not true, in general, that any level set of $(f_1, \ldots, f_q)$ has pure dimension $\dim (\mathcal{F})$, 
we need to deal with leaves contained in level sets of higher dimension.
The next result is the first step in this task.  
\begin{proposition}
\label{pro_basica}
Consider a normal closed analytic variety $S$ defined in a complex manifold $M$.
Let $f, g$ be meromorphic functions defined in some open subset $U$ of $S$
such that $df \wedge dg \not \equiv 0$.
Let $H$ be an irreducible closed analytic subvariety of $S \cap U$ of codimension $1$. 
Then, there exists $h \in \mathcal{K}$ that is non-constant on $H$, where $\mathcal{K}$ is the field
generated by $f$, $g$ and the constant functions.
\end{proposition}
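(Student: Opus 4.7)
The plan is to construct $h \in \mathcal{K} := \mathbb{C}(f,g)$ whose restriction to $H$ is nonconstant. If either $f|_H$ or $g|_H$ is nonconstant, we take $h = f$ or $h = g$ respectively, so the substantive case is when both restrictions are constant. After replacing $f$ by $1/f$ in $\mathcal{K}$ when $f|_H = \infty$ and then subtracting the resulting finite value (and similarly for $g$), we may assume $f|_H = g|_H = 0$; note these substitutions preserve the field $\mathcal{K}$.

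Because $S$ is normal, it is smooth at the generic point of $H$, and the order of vanishing along $H$ defines a discrete valuation $v_H : \mathcal{M}(S)^{*} \to \mathbb{Z}$ with residue field $\mathcal{M}(H)$. Let $v := v_H|_{\mathcal{K}}$. The reduction forces $v(f), v(g) \geq 1$, so $v$ is nontrivial; its value group is a nonzero subgroup of $\mathbb{Z}$, hence $\cong \mathbb{Z}$, so $v$ has rational rank $1$. Since $df \wedge dg \not\equiv 0$, the functions $f$ and $g$ are algebraically independent over $\mathbb{C}$, giving $\mathrm{tr.deg}_{\mathbb{C}}(\mathcal{K}) = 2$.

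By Abhyankar's inequality, the residue field $k_v := \mathcal{O}_v/\mathfrak{m}_v$ satisfies $\mathrm{tr.deg}_{\mathbb{C}}(k_v) \leq 2 - 1 = 1$. The crux is the matching lower bound: by the classical theory of valuations on two-dimensional function fields (Zariski), every rank-one discretely-valued valuation on $\mathbb{C}(f,g)$ is divisorial, and hence $\mathrm{tr.deg}_{\mathbb{C}}(k_v) = 1$. Granting this, $k_v \supsetneq \mathbb{C}$, and any preimage in $\mathcal{O}_v \subseteq \mathcal{K}$ of a nonconstant element of $k_v$ supplies the desired $h$, since the residue map $\mathcal{O}_v \to k_v \hookrightarrow \mathcal{M}(H)$ is precisely the restriction $h \mapsto h|_H$.

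The main obstacle is establishing this divisorial realization of $v$. A concrete geometric route: resolve the indeterminacy of $\phi := (f,g) : S \dashrightarrow \mathbb{P}^1 \times \mathbb{P}^1$ on a desingularization $\tilde S \to S$; if the strict transform of $H$ maps dominantly onto a curve in $\mathbb{P}^1 \times \mathbb{P}^1$, a coordinate function already works. Otherwise it contracts to a point $p$, which we blow up in the target before iterating with the lifted map. Zariski's theorem on sequences of quadratic transformations for surfaces guarantees termination, producing an exceptional $\mathbb{P}^1$ onto which the strict transform of $H$ maps dominantly; pulling back any nonconstant rational function on that $\mathbb{P}^1$ yields $h$.
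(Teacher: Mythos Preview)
Your valuation-theoretic setup is correct through Abhyankar's inequality, but the crux --- that every discrete rank-one valuation on $\mathbb{C}(f,g)$ is divisorial --- is false. Embed $\mathbb{C}(x,y)\hookrightarrow\mathbb{C}((t))$ by $x\mapsto t$, $y\mapsto e^{t}-1$ and pull back the $t$-adic valuation: the value group is $\mathbb{Z}$ (since $v(x)=1$) while the residue field is $\mathbb{C}$, so this valuation is \emph{not} divisorial. Zariski's theorem on quadratic transforms says the blow-up sequence terminates \emph{if and only if} the valuation is divisorial; it does not guarantee termination. Thus both your abstract claim and your geometric route assume precisely what has to be shown, namely that $v=v_{H}|_{\mathcal{K}}$ is not one of these ``infinitely-near-point'' valuations. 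Nothing you have used so far (discreteness, the fact that $v$ is a restriction of the divisorial $v_{H}$ on the larger field $\mathcal{M}(S)$) excludes this possibility.

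The paper fills this gap by an elementary direct argument that uses the hypothesis $df\wedge dg\not\equiv 0$ rather than abstract valuation theory. Assuming every element of $\mathcal{K}$ is constant on $H$, choose $\Psi\in\mathcal{K}$ of minimal positive order $r=\nu_{H}(\Psi)$. For any $\Upsilon\in\mathcal{K}$ with $\nu_{H}(\Upsilon)\geq 0$ one checks (by minimality of $r$) that $\nu_{H}(\Upsilon-c_{0})$ is a multiple $m_{0}r$, so $\Upsilon_{1}:=(\Upsilon-c_{0})\Psi^{-m_{0}}$ again has order $\geq 0$ and constant value $c_{1}$ on $H$; iterating gives $\Upsilon=c_{0}+c_{1}\Psi^{m_{0}}+\cdots+c_{k}\Psi^{m_{0}+\cdots+m_{k-1}}+\Upsilon_{k+1}\Psi^{m_{0}+\cdots+m_{k}}$. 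Computing $d\Upsilon\wedge d\Psi$ from this identity yields $\nu_{H}(d\Upsilon\wedge d\Psi)\geq(m_{0}+\cdots+m_{k})r$ for every $k$, hence $d\Upsilon\wedge d\Psi\equiv 0$ for all $\Upsilon\in\mathcal{K}$, and in particular $df\wedge dg\equiv 0$. This is exactly the step that rules out the transcendental-arc valuations, and it is where the hypothesis $df\wedge dg\not\equiv 0$ actually enters.
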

An analogous result for the case of holomorphic functions $f, g$ is proved in \cite{Pinheiro-Reis:topological}.
\begin{proof} 
  Let us reason by contradiction, 
assuming that every $h \in \mathcal{K}$ is constant on $H$.
Since $S$ is normal and thus $\mathrm{cod} (\mathrm{Sing}(S)) \geq 2$, 
we can define the annulation order $\nu_H(a)$ of a in $H$ for any 
weakly holomorphic function $a$.
  Given a meromorphic germ $a/b$ we define the annulation order 
$\nu_H(a/b)=\nu_H(a)-\nu_H(b)$. Let $r\geq 1$ be the minimum of the positive annulation orders of 
functions in $\mathcal{K}$. Choose a 
meromorphic first integral $\Psi \in \mathcal{K}$ with $\nu_H(\Psi)=r$. 
Let $\Upsilon \in \mathcal{K}$ be 
another meromorphic first integral  with $\nu_H(\Upsilon)\geq 0$. 
We are assuming that $\Upsilon$ is constant along $H$, having a fixed value  $c_0\in \mathbb{C}$. 
There is an integer number $m_0\geq 1$ such that
$$
\nu_H(\Upsilon-c_0)=m_0r.
$$
Indeed, otherwise, we have that $\nu_H(\Upsilon-c_0)=qr+s$, with $0<s<r$, and then the properties 
$(\Upsilon-c_0) \Psi^{-q} \in \mathcal{K}$ and
$$
\nu_H((\Upsilon-c_0) \Psi^{-q})=s,
$$
contradict the minimality of $r$. Denote $\Upsilon_1=(\Upsilon-c_0)\Psi^{-m_0}$. 
Then $\Upsilon_1 \in \mathcal{K}$ and $\Upsilon_1\vert _H$ is constant and equal to $c_1\ne 0$. 
Now, we repeat the procedure with $\Upsilon_1-c_1$, noting that $\nu_H(\Upsilon_1-c_1)>0$ and we write
$
\Upsilon_2=(\Upsilon_1-c_1)\Psi^{-m_1}
$.
We obtain
\begin{eqnarray*}
\Upsilon &=& c_0+\Upsilon_1\Psi^{m_0} \\
&=& c_0+c_1 \Psi^{m_0} + \Upsilon_2\Psi^{m_0+m_1}      \\
&=& c_0+c_1 \Psi^{m_0} + c_2 \Psi^{m_0+m_1}  + \Upsilon_3 \Psi^{m_0+m_1+m_2}    \\
&=& \cdots    \\
&=& c_0+\sum_{j=1}^k c_j \Psi^{m_0+m_1+\cdots+m_{j-1}}+  \Upsilon_{k+1}\Psi^{m_0+m_1+\cdots+m_{k}} ,
\end{eqnarray*}
where $\nu_H (\Upsilon_j) \geq 0$, for any $j\geq 1$.
If we compute $d\Upsilon\wedge d\Psi$ with the formulas above, we conclude that
$$
\nu_H(d\Upsilon\wedge d\Psi)\geq (m_0+m_1+\cdots+m_{k})\nu_H(\Psi)=(m_0+m_1+\cdots+m_{k})r,
$$
for any $k\geq 0$. Therefore $d\Upsilon\wedge d\Psi \equiv 0$. Moreover, such result holds for any 
$\Upsilon \in \mathcal{K}$ since $\nu_{H} (\Upsilon)<0$ implies $\nu_{H} (\Upsilon^{-1})>0$ and hence
\[ d\Upsilon\wedge d\Psi \equiv  - \Upsilon^{2} (d\Upsilon^{-1}\wedge d\Psi) \equiv 0. \]
We deduce $d f \wedge  d\Psi \equiv 0$ and $d g \wedge  d\Psi \equiv 0$.
This implies that $df\wedge dg \equiv 0$, contradiction.
\end{proof}
 The previous result has independent interest since it implies that, in our setting, 
 meromorphic first integrals generically separate points in a hypersurface.
  \begin{corollary}
 Let $\mathcal{F}$ be a germ of CI foliation of codimension $q \geq 2$ in $\cn{n}$.
 Consider an irreducible germ $H$ of hypersurface. 
 Then there is a meromorphic first integral $\Psi$ of $\mathcal{F}$ such that 
 $\Psi_{|H}$ is non-constant.
 \end{corollary}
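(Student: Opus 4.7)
The plan is to apply Proposition \ref{pro_basica} directly, taking for the normal variety $S$ a smooth open neighborhood of the origin in $\cn{n}$ and selecting for $f$ and $g$ two entries of a complete first integral of $\mathcal{F}$.

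Concretely, fix a complete first integral $(f_1,\ldots,f_q)$ with $df_1 \wedge \cdots \wedge df_q \not\equiv 0$. I would first reduce the wedge condition to the two-variable case: at any point $p$ where the $q$-wedge does not vanish, the cotangent vectors $df_1(p),\ldots,df_q(p)$ are linearly independent, so in particular $df_1(p)$ and $df_2(p)$ are linearly independent, giving $df_1 \wedge df_2 \not\equiv 0$ (this is where the hypothesis $q \geq 2$ enters in an essential but elementary way). After passing to a polydisc $U$ small enough so that $f_1, f_2$ are defined on it as meromorphic functions and a representative of $H$ is an irreducible closed analytic subvariety of $U$ of codimension one, I apply Proposition \ref{pro_basica} with $S=U$, $f=f_1$, $g=f_2$. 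This yields a meromorphic function $\Psi$ in the field $\mathcal{K}$ generated by $f_1$, $f_2$ and the constants such that $\Psi_{|H}$ is non-constant.

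It remains to observe that every element of $\mathcal{K}$ is automatically a first integral of $\mathcal{F}$. This is a routine consequence of the chain rule: the differential of any rational expression $R(f_1, f_2)$ lies in the meromorphic module generated by $df_1$ and $df_2$, both of which annihilate the tangent distribution of $\mathcal{F}$ since $f_1, f_2$ are first integrals. Hence $\Psi$ is the desired meromorphic first integral of $\mathcal{F}$ non-constant on $H$. There is no genuine obstacle here: the statement is a direct corollary once Proposition \ref{pro_basica} is in hand, and the only mild check is the passage from $q$-wedge nonvanishing to $2$-wedge nonvanishing, which is an immediate linear algebra consequence of the hypothesis $q \geq 2$.
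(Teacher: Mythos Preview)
Your proof is correct and is exactly the intended argument: the paper states this corollary immediately after Proposition~\ref{pro_basica} without giving a proof, treating it as a direct consequence. Your write-up supplies precisely the omitted details---choosing two entries of a complete first integral, noting that $df_1\wedge\cdots\wedge df_q\not\equiv 0$ forces $df_1\wedge df_2\not\equiv 0$, applying Proposition~\ref{pro_basica} with $S$ a smooth neighborhood of the origin, and observing that any element of the field generated by $f_1,f_2$ is again a meromorphic first integral.
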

 Next lemma will be used to show that, for CI foliations of codimension $q \geq 2$, there are infinitely many
 invariant hypersurfaces containing a fixed point $P \not \in \mathrm{Sing} (\mathcal{F})$.
\begin{lemma}
\label{lem:pure_meromorphic}
Consider an irreducible closed analytic variety $S$ defined in a complex manifold $M$.
Let $\mathcal{K}$ be a subfield of the field of meromorphic functions containing the constant functions and defined
in some open subset $U$ of $S$.
Suppose there exist $f, g \in \mathcal{K}$ such that $df \wedge dg \not \equiv 0$. 
Then, given $P \in S \cap U$ and an irreducible component $S'$ of the germ $(S,P)$, there exists $h \in \mathcal{K}$ 
such that the neither the
germ of $h$ at $(S',P)$ nor its inverse $h^{-1}$ is weakly holomorphic. 
\end{lemma}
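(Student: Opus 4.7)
My plan is to prove the lemma by contradiction, using iteration ideas reminiscent of the proof of Proposition~\ref{pro_basica}. Assume that for every $h\in\mathcal{K}$ either $h$ or $h^{-1}$ is weakly holomorphic at $(S',P)$; the goal is to force $df\wedge dg\equiv 0$. By passing to the normalization $\pi:\tilde S\to S$ of a neighborhood of $P$ and then further blowing up, I work at a smooth point $\tilde P$ of a complex manifold $\tilde S$ corresponding to the branch $S'$. There ``weakly holomorphic at $(S',P)$'' becomes ``holomorphic at $\tilde P$'', the set $R:=\{h\in\mathcal{K}:h\text{ is holomorphic at }\tilde P\}$ becomes a valuation ring with fraction field $\mathcal{K}$, and using the $h\leftrightarrow h^{-1}$ symmetry I may assume $f,g\in R$ with $f(\tilde P)=g(\tilde P)=0$.

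The main construction is via the blow-up $\sigma:\hat S\to\tilde S$ at $\tilde P$, with exceptional divisor $E\cong\mathbb{P}^{d-1}$ and $d=\dim\tilde S\ge 2$. Let $m_f,m_g$ be the orders of $f,g$ at $\tilde P$, set $d_0=\gcd(m_f,m_g)$, $m=m_f/d_0$, $n=m_g/d_0$, and consider
\[
 h\;:=\;\frac{f^{\,n}}{g^{\,m}}\;\in\;\mathbb{C}(f,g)\;\subset\;\mathcal{K}.
\]
Its pullback $\sigma^{*}h$ has order zero along $E$, and $\sigma^{*}h|_{E}$ is a rational function of degree $0$ on $\mathbb{P}^{d-1}$ expressed in terms of the leading homogeneous parts $f_0,g_0$ of $f,g$. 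If this restriction is non-constant then neither $h$ nor $h^{-1}$ can be holomorphic at $\tilde P$ (because the pullback of any germ holomorphic at $\tilde P$ is constant on the fiber $E$), contradicting the hypothesis.

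The remaining case is $\sigma^{*}h|_{E}$ constant, equal to some $c\in\mathbb{C}^{*}$. Then $f_0^{\,n}=c\,g_0^{\,m}$, and by unique factorization in $\mathbb{C}[z_1,\dots,z_d]$ combined with coprimality of $n,m$, there exist a homogeneous $Q$ of degree $d_0$ and constants $\alpha,\beta$ with $f_0=\alpha Q^{m}$, $g_0=\beta Q^{n}$, $\alpha^{n}=c\beta^{m}$. Replacing $f$ by $f':=f^{n}-(\alpha^{n}/\beta^{m})\,g^{m}\in R\cap\mathbb{C}(f,g)$ produces a function that vanishes at $\tilde P$ to strictly higher order than $f^{n}$ while still satisfying $df'\wedge dg\not\equiv 0$. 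Iterating the construction with $(f',g)$ in place of $(f,g)$ yields a sequence of pairs whose orders at $\tilde P$ are strictly increasing.

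The step I expect to be the main obstacle is showing this iteration must terminate. My plan is to observe that if it did not, the repeated cancellations of leading parts would encode a non-trivial formal power-series identity between $f$ and $g$ in the completion $\widehat{\mathcal{O}}_{\tilde S,\tilde P}$; since $f$ and $g$ are algebraically independent by $df\wedge dg\not\equiv 0$, no such formal relation can hold. Hence at some stage the restriction $\sigma^{*}h|_{E}$ must be non-constant, producing the desired $h\in\mathcal{K}$ with indeterminacy at $(S',P)$.
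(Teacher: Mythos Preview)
Your approach diverges from the paper's. After the same reduction to a normal point, the paper does \emph{not} blow up: it takes an irreducible component $H$ of $f^{-1}(0)$ through $P$, applies Proposition~\ref{pro_basica} to the hypersurface $H$ to obtain $\Psi\in\mathcal{K}$ that is non-constant on $H$, normalizes so that $\Psi$ is holomorphic with $\Psi(P)=0$, and then observes that $f/\Psi^{k}$ has positive order along the chosen component $H_P$ of $(f^{-1}(0),P)$ (because $\Psi$ does not vanish identically on $H$) but strictly negative order along any component $H_P'$ of $(\Psi^{-1}(0),P)$ once $k$ is large. Hence $f/\Psi^{k}$ is purely meromorphic, contradiction. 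No iteration, no termination issue.

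Your blow-up iteration is a plausible alternative, but the termination step is a genuine gap. You claim that non-termination ``would encode a non-trivial formal power-series identity between $f$ and $g$'', yet you do not produce such an identity, and I do not see how to extract one. The polynomials $P_k\in\mathbb{C}[u,v]$ with $f_k=P_k(f,g)$ have $u$-degree $n_0 n_1\cdots n_{k-1}$, and since each $n_k=m_g/\gcd(m_{f_k},m_g)$ can exceed $1$, these degrees are in general unbounded; the $P_k$ do not converge to a single formal series in two variables. The bare fact that $\mathrm{ord}_{\tilde P}\bigl(P_k(f,g)\bigr)\to\infty$ for some sequence of nonzero polynomials is not, by itself, incompatible with algebraic independence of $f$ and $g$. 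Your argument does go through in the special situation where every $n_k=1$: then $f_k=f-\sum_{j<k}c_j g^{m_j}$, so $f=\Phi(g)$ for some $\Phi\in\mathbb{C}[[t]]$ and $df\wedge dg=0$. But nothing in the construction forces $m_g\mid m_{f_k}$ at every stage, and once some $n_k>1$ the bookkeeping no longer yields a relation of that shape. You would need an additional argument (for instance a reduction guaranteeing all $n_k=1$, or a valuation-theoretic bound on the number of steps) which is not supplied. A minor side remark: after ``further blowing up'' past the normalization, the equivalence ``weakly holomorphic at $(S',P)$ $\Leftrightarrow$ holomorphic at $\tilde P$'' only survives in the forward direction; that direction is all you use, but the sentence as written overstates it.
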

\begin{proof}
Consider the normalization $\pi: \hat{S} \to S$. 
Let $\hat{P}$ be the point in $\pi^{-1}(P)$
such that $P$ belongs to the closure of $\pi^{-1} (S' \setminus \mathrm{Sing}(S))$.
Notice that since $\hat{S}$ is normal, the germ $(\hat{S}, \hat{P})$ is irreducible.
There is a bijective 
correspondence between the weakly holomorphic (resp. meromorphic) functions of $U$
and the holomorphic (resp. meromorphic) functions of $\pi^{-1}(U)$.
Thus it suffices to find $h \in \pi^{*} (\mathcal{K})$ such that neither 
$h$ nor $h^{-1}$ is holomorphic at $\hat{P}$.
Therefore, modulo to replace $S$ and $P$ with $\hat{S}$ and $\hat{P}$ respectively, 
we can suppose that $S$ is normal.
 
Let us argue by contradiction, assume that $h$ is non-purely meromorphic at $P$
for any $h \in \mathcal{K}$. Up to replace $f$ with $f^{-1}$ if necessary, we can assume 
that $f$ is holomorphic at $P$. 
Moreover, we can suppose $f(P)=0$ up to replace 
$f$ with $f - f(P)$. Let $H$ be an irreducible component of   $f^{-1}(0)$
such that $(H,P)$ contains an irreducible component $H_P$ of $(f^{-1}(0),P)$.
There exists $\Psi \in \mathcal{K}$ such that $\Psi_{|H}$ is non-constant by Proposition \ref{pro_basica}.
Analogously as for $f$, we can suppose that $\Psi$ is holomorphic at $P$ and $\Psi (P) =0$.
Let $H_{P}'$ be an irreducible component of $(\Psi^{-1}(0),P)$.  Since 
$\nu_{H_P}(f/\Psi^{k})=\nu_{H_P}(f)>0$ and 
\[ \nu_{H_{P}'}(f/\Psi^{k}) = \nu_{H_{P}'}(f) - k \nu_{H_{P}'}(\Psi) \leq  \nu_{H_{P}'}(f) - k \] 
for $k \in \mathbb{Z}_{>0}$, it follows that 
$f / \Psi^{k}$ is purely meromorphic for 
any $k \in \mathbb{Z}_{>0}$ sufficiently big. 
\end{proof}
\begin{definition}
We denote by ${\operatorname{B}}_{\epsilon}$ and $\overline{\operatorname{B}}_{\epsilon}$
the open and closed balls of center ${\bf 0}$ and radius $\epsilon$ respectively.
\end{definition}
\begin{definition}
\label{def:fe}
We denote by ${\mathcal F}^{\epsilon}$ the foliation induced by $\mathcal{F}$ in ${\operatorname{B}}_{\epsilon}$.
Given $P \in {\operatorname{B}}_{\epsilon} \setminus \mathrm{Sing} (\mathcal{F})$, 
we denote by $\mathcal{L}_{P}^{\epsilon}$ the leaf of $\mathcal{F}^{\epsilon}$ through $P$. 
Sometimes we drop the superindex $\epsilon$ if is implicit.
\end{definition}
Finally, we can show the main result in this subsection.
\begin{proposition}
  \label{prop:closedleaves}
  Let $\mathcal{F}$ be a CI germ of foliation.
  Then there exists $\epsilon_{0}>0$ and a representative of 
  $\mathcal{F}$ defined in an open neighborhood of 
   the closed ball $\overline{\operatorname{B}}_{\epsilon_{0}}$ such that for 
   any $P\in {\operatorname{B}}_{\epsilon_{0}} \setminus \mathrm{Sing}(\mathcal{F})$, 
   the  leaf ${\mathcal L}_{P}^{\epsilon_{0}}$ of $\mathcal{F}^{\epsilon_{0}}$ through $P$ is a closed analytic subset 
   of ${\operatorname{B}}_{\epsilon_{0}} \setminus \mathrm{Sing}(\mathcal{F})$.
   In particular, the closure of ${\mathcal L}_{P}^{\epsilon_{0}}$
   in  ${\operatorname{B}}_{\epsilon_{0}}$ 
   is a closed analytic set of dimension $\dim (\mathcal{F})$ such that 
   $\overline{\mathcal L}_{P}^{\epsilon_{0}} \setminus \mathrm{Sing}(\mathcal{F})= {\mathcal L}_{P}^{\epsilon_{0}}$.
\end{proposition}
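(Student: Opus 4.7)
The plan is to show that $\mathcal{L}_P^{\epsilon_0}$ is a connected component of the regular locus of a suitable closed analytic subvariety of $\operatorname{B}_{\epsilon_0}$ of dimension $\dim \mathcal{F}$, obtained by intersecting level sets of enough meromorphic first integrals. After shrinking $\epsilon_0$, choose a representative of $\mathcal{F}$ admitting a complete first integral $(f_1,\ldots,f_q)$, each $f_j$ written as the quotient of holomorphic functions defined on a neighbourhood of $\overline{\operatorname{B}}_{\epsilon_0}$, and set $F=(f_1,\ldots,f_q)$. Fix $P\in\operatorname{B}_{\epsilon_0}\setminus\mathrm{Sing}(\mathcal{F})$, let $V_1:=F^{-1}(F(P))$ and let $S_1$ be the irreducible component of $V_1$ through $P$. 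Since every meromorphic first integral of $\mathcal{F}$ is constant on the leaf, $\mathcal{L}_P^{\epsilon_0}\subset S_1$.

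If $\dim S_1=\dim\mathcal{F}$, a local analysis at regular points of $S_1$ (where the tangent space of $S_1$ must coincide with that of the leaf) shows that $\mathcal{L}_P^{\epsilon_0}$ is a connected component of $(S_1\cap\operatorname{B}_{\epsilon_0})\setminus\mathrm{Sing}(\mathcal{F})$, and the proposition follows. The substantive case is $\dim S_1>\dim\mathcal{F}$, which already occurs in the example \eqref{equ:ex1}: for $P=(a,b,0)$ with $(a,b)\ne 0$, the point $P$ is $\mathcal{F}$-regular yet lies on the two-dimensional component $\{z=0\}$ of $\{f_1=f_2=0\}$, while the leaf through $P$ is one-dimensional. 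Geometrically, the issue is that $df_1,\ldots,df_q$ may fail to be linearly independent at $P$ even though $\mathcal{F}$ is regular there.

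To treat this case, I would iteratively refine $S_1$ by adjoining further meromorphic first integrals produced by Proposition \ref{pro_basica}. Applied to the normalisation $\hat{S}_1\to S_1$, the proposition takes as input two functionally independent meromorphic functions on $\hat{S}_1$; these arise by lifting rational combinations of $f_1,\ldots,f_q$ that are indeterminate along $S_1$ (and hence carry non-trivial information on $\hat{S}_1$), their existence being underwritten by Lemma \ref{lem:pure_meromorphic}. Choosing an irreducible codimension-one subvariety $H_1\subset S_1$ through $P$, the proposition produces a meromorphic first integral $g_1$ of $\mathcal{F}$ non-constant on $H_1$, hence on $S_1$; setting $S_2$ to be the irreducible component through $P$ of $S_1\cap\{g_1=g_1(P)\}$, one has $\dim\mathcal{F}\le\dim S_2<\dim S_1$ and $\mathcal{L}_P^{\epsilon_0}\subset S_2$. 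Iterating finitely many times — termination is guaranteed by the lower bound $\dim\mathcal{F}$ — produces a closed analytic subvariety $Z$ of dimension exactly $\dim\mathcal{F}$ containing $\mathcal{L}_P^{\epsilon_0}$, reducing to the easy case.

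The principal obstacle is executing the inductive step cleanly: at each stage one must verify that Proposition \ref{pro_basica} is actually applicable, i.e. that two functionally independent first integrals survive the restriction to $\hat{S}_k$, and one must choose the hypersurface $H_k\subset S_k$ so that the resulting $g_k$ is non-constant on $S_k$ while $\mathcal{L}_P^{\epsilon_0}$ remains inside the irreducible component through $P$. One also needs that the whole construction is compatible with the boundary $\partial\operatorname{B}_{\epsilon_0}$, but this is automatic since we work throughout on a neighbourhood of the closed ball. Finally, the closure assertion $\overline{\mathcal{L}}_P^{\epsilon_0}\setminus\mathrm{Sing}(\mathcal{F})=\mathcal{L}_P^{\epsilon_0}$ follows because $\overline{\mathcal{L}}_P^{\epsilon_0}$ is an analytic set of dimension $\dim\mathcal{F}$ smooth at $P$, so none of its irreducible components can be contained in $\mathrm{Sing}(\mathcal{F})$.
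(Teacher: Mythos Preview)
Your approach has the right shape --- reduce to an analytic variety of dimension $\dim\mathcal{F}$ containing $\mathcal{L}_P^{\epsilon_0}$ by iterated intersection with level sets of first integrals --- but the inductive step has a genuine gap, which you yourself flag. The problem is your starting point: by taking $S_1$ to be an irreducible component of the \emph{full} fibre $F^{-1}(F(P))$, you make every $f_j$ constant on $S_1$ at once. You then need two functionally independent meromorphic first integrals on $\hat S_1$ to feed into Proposition~\ref{pro_basica}, and the sentence ``these arise by lifting rational combinations of $f_1,\ldots,f_q$ that are indeterminate along $S_1$ \ldots\ their existence being underwritten by Lemma~\ref{lem:pure_meromorphic}'' does not do this. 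Lemma~\ref{lem:pure_meromorphic} produces a single purely meromorphic element of a field that \emph{already} contains two independent functions; it does not manufacture independent functions on a subvariety where the whole field has collapsed to constants. Whether ratios like $(f_i-c_i)/(f_j-c_j)$ lift to non-constant, let alone independent, functions on $\hat S_1$ is exactly the substance of the problem and cannot be assumed. (There is also a minor issue: $F(P)$ need not be defined if $P$ lies in an indeterminacy locus.)

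The paper avoids this by reversing the order of operations: rather than collapsing everything and then trying to recover information, it descends \emph{one} dimension at a time while maintaining a complete first integral on the current variety. At stage $r+1$ one has an invariant irreducible $H_{r+1}$ of dimension $\dim\mathcal{F}+r+1$ and $r+1$ independent first integrals on it. Lemma~\ref{lem:pure_meromorphic} applied to the field they generate yields a $\Psi$ purely meromorphic at $P$; the point of pure meromorphy is that \emph{infinitely many} level hypersurfaces of $\Psi$ inside $H_{r+1}$ pass through $P$. Among these one chooses $H_r$ not contained in the zero locus of $df_{1,r+1}\wedge\cdots\wedge df_{r+1,r+1}$, so that $r$ of the restricted integrals remain independent on $H_r$. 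This freedom of choice is precisely what carries the induction, and it is missing from your scheme because the single fibre $S_1\cap\{g_1=g_1(P)\}$ gives you no room to manoeuvre.
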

\begin{proof}
Let $(f_1, \ldots, f_q)$ be a complete first integral of $\mathcal{F}$.
Consider $\epsilon_{0}>0$ such that $\mathcal{F}$ and $f_1, \ldots, f_q$ have
representatives defined in a neighborhood of $\overline{\operatorname{B}}_{\epsilon_{0}}$.

We claim that given any $1 \leq r \leq q$, there exist 
an $\mathcal{F}^{\epsilon_{0}}$-invariant irreducible closed analytic set $H_r$
of dimension $\dim (\mathcal{F}) + r$
in ${\operatorname{B}}_{\epsilon_{0}}$ containing $P$
and meromorphic first integrals $f_{1,r}, \ldots, f_{r,r}$ 
of $\mathcal{F}_{|H_r}$ 
such that $d f_{1,r}  \wedge \ldots \wedge d f_{r,r} \not \equiv 0$.
The result is obvious for $r=q$ since we consider $H_q = {\operatorname{B}}_{\epsilon_{0}}$
and $f_{j,q} = f_j$ for $1 \leq j \leq q$.
Let us show that if it holds for $r+1 \geq 2$ so it does for $r$. 

Consider the field $\mathcal{K}_{r+1}$ generated by the constant functions and $f_{1,r+1}, \hdots, f_{r+1,r+1}$.
Let $S'$ be an irreducible component of the germ $(H_{r+1}, P)$. Then there exists 
$\Psi \in \mathcal{K}_{r+1}$ such that 
$\Psi$ is purely meromorphic at $(S', P)$ by Lemma \ref{lem:pure_meromorphic}.
By considering irreducible components  
of level sets $\{ \Psi = cte \} \cap H_{r+1}$, we obtain infinitely many 
$\mathcal{F}^{\epsilon_{0}}$-invariant irreducible closed analytic sets $H$ of dimension $\dim (\mathcal{F}) + r$
that contain $P$. In particular we can choose one of them $H_r$ that is not contained in the zero set 
of $d f_{1,r+1}  \wedge \ldots \wedge d f_{r+1,r+1}$.
Thus there exists a subset of $r$ elements of $((f_{1,r+1})_{|H_r}, \ldots, (f_{r+1,r+1})_{|H_r})$
that is a complete first integral of $\mathcal{F}_{|H_r}^{\epsilon_{0}}$.
The proof of the claim is complete.

Let us fix $r=1$, so that $\mathcal{F}_{|H_1}^{\epsilon_{0}}$ is a codimension $1$ foliation with non-constant 
meromorphic first integral $f_{1,1}$.
The irreducible components of the levels of $f_{1,1}$ 
provide a partition of $H_{1} \setminus \mathrm{Sing}(\mathcal{F})$ in 
$\mathcal{F}^{\epsilon_{0}}$-invariant irreducible smooth closed analytic subvarieties
 of $H_{1} \setminus \mathrm{Sing}(\mathcal{F})$ 
 whose closures are irreducible closed analytic subvarieties of $H_1$ of dimension $\dim (\mathcal{F})$.
 This is the partition of $H_{1} \setminus \mathrm{Sing}(\mathcal{F})$ in leaves of $\mathcal{F}_{|H_1}^{\epsilon_{0}}$.
 Therefore, we obtain $\dim (\overline{\mathcal L}_{P} )=\dim (\mathcal{F})$ and 
 $\overline{\mathcal L}_{P} \setminus \mathrm{Sing}(\mathcal{F}) = \mathcal{L}_{P}$.
\end{proof}

\begin{remark}
It is clear that Proposition \ref{prop:closedleaves} also holds for any $\epsilon \in (0, \epsilon_{0}]$.
\end{remark}

\begin{definition}
\label{def:separatrix}
Consider a holomorphic foliation $\mathcal{F}$ defined in a neighborhood 
of the origin in $\mathbb{C}^{n}$. 
We say that a leaf $\mathcal{L}$ is a separatrix if $\overline{\mathcal L} \cap \mathrm{Sing} (\mathcal{F})  \neq \emptyset$.
\end{definition}
\begin{remark}
Under the hypotheses of Proposition \ref{prop:closedleaves}
and $\mathrm{Sing} (\mathcal{F}) = \{ \bf{0} \}$, 
the foliation $\mathcal{F}^{\epsilon_{0}}$ has two types of 
leaves, namely closed leaves in ${\operatorname{B}}_{\epsilon_{0}}$ and separatrices ${\mathcal L}$ such that 
$\overline{\mathcal L} = {\mathcal L} \cup \{ {\bf 0} \}$.
\end{remark}  
\subsection{Stability of leaves}
There are two topological hypotheses in Theorem \ref{teo:infinite}, namely closedness of leaves 
and finiteness of holonomy groups. Both are a consequence of the CI property. In the former case
we proved it in Proposition \ref{prop:closedleaves} whereas the latter property is treated in this subsection.

We will also show that any closed leaf of a CI foliation
has a fundamental system of invariant neighborhoods.  
Since leaves are not necessarily transverse to the boundary of the domain of definition, we need
the following definition in order to give a precise statement.
\begin{definition} 
Consider a holomorphic foliation $\mathcal{F}$ defined in a neighborhood of $\overline{\operatorname{B}}_{\epsilon}$.
Given a point $P \in \overline{\operatorname{B}}_{\epsilon}\setminus \mathrm{Sing}(\mathcal{F})$, we define 
{\em the leaf ${\mathfrak L}^\epsilon_P$ of $\mathcal{F}$ in  $\overline{\operatorname{B}}_{\epsilon}$ through $P$} by 
$$
{\mathfrak L}^\epsilon_P=\bigcap_{\epsilon < \epsilon' << \epsilon + 1}{\mathcal L}_P^{\epsilon'},
$$
where each ${\mathcal L}_P^{\epsilon'}$ is the leaf of 
$\mathcal{F}^{\epsilon'}$ through $P$. 
We will say that ${\mathfrak L}^\epsilon_P$ is a leaf of $\mathfrak{F}^{\epsilon}$.
We will say that the holonomy group of ${\mathfrak L}^\epsilon_P$ is finite if there exists 
$\epsilon'>\epsilon$ such that the image of the holonomy representation 
$\pi_{1} ({\mathcal L}_{P}^{\epsilon'}, P) \to \mathrm{Diff} (T, P)$, where 
$T$ is a transversal to ${\mathcal F}$ at $P$, is finite. 
\end{definition}
Note that $\mathrm{Diff} (T, P)$ is the group of germs of biholomorphisms at $(T,P)$ 
(cf. Definition \ref{def:formal_dif}).
\begin{remark}
Note that $Q \in {\mathfrak L}^\epsilon_P$ if and only if $P \in {\mathfrak L}^\epsilon_Q$ and in such a case 
${\mathfrak L}^\epsilon_P = {\mathfrak L}^\epsilon_Q$ holds.
Thus the leaves of $\mathfrak{F}^{\epsilon}$ provide a partition of 
$\overline{\operatorname{B}}_{\epsilon} \setminus \mathrm{Sing} (\mathcal{F})$.
\end{remark}
\begin{remark}
\label{rem:closed_implies_connected}
A priori a leaf ${\mathfrak L}^\epsilon_P$ is not necessarily connected. 
Nevertheless, it is connected if it is closed. In such a case, fixed $\epsilon'' > \epsilon$, 
there exists a compact neighborhood of ${\mathfrak L}^\epsilon_P$ in 
${\mathcal L}_{P}^{\epsilon''}$. Thus, $\overline{\mathcal L}_{P}^{\epsilon'}$ is compact 
and satisfies 
\[  {\mathcal L}_{P}^{\epsilon'} = 
\overline{\mathcal L}_{P}^{\epsilon'} \cap {\operatorname{B}}_{\epsilon'} \]
for $0 < \epsilon' - \epsilon <<1$. This implies  
${\mathfrak L}^\epsilon_P=\bigcap_{0< \epsilon' -\epsilon <<1} \overline{\mathcal L}_P^{\epsilon'}$ and hence
that ${\mathfrak L}^\epsilon_P$ is connected, thanks to classical results 
on continua topological spaces (cf. \cite[Theorem 28.2]{Willard:general_topology}).  
\end{remark}
\begin{remark}
\label{rem:aclosed_implies_connected}
Consider the case where $\overline{\operatorname{B}}_{\epsilon} \cap \mathrm{Sing}(\mathcal{F}) = \{ \bf{0} \}$.
Any leaf $\mathfrak{L}_{P}^{\epsilon}$  that is closed in 
$\overline{\operatorname{B}}_{\epsilon} \setminus  \{ \bf{0} \}$ satisfies that 
$\overline{\mathfrak L}_{P}^{\epsilon} \cap {\operatorname{B}}_{\epsilon}$ is a  closed analytic subset
of ${\operatorname{B}}_{\epsilon}$   by 
Remmert-Stein's Theorem (cf. \cite[Theorem K7]{Gunning}).
Moreover, we claim that $\overline{\mathfrak L}_{P}^{\epsilon}$ and ${\mathfrak L}_{P}^{\epsilon}$ are both connected.  
Indeed, we can proceed as in Remark \ref{rem:closed_implies_connected} to show 
\begin{equation}
\label{equ:leaves_closed} 
\overline{\mathfrak L}^\epsilon_P \setminus \{ {\bf 0} \} = 
\bigcap_{0< \epsilon' -\epsilon <<1} (\overline{\mathcal L}_P^{\epsilon'} \setminus \{ {\bf 0} \}) =
\bigcap_{0< \epsilon' -\epsilon <<1} {\mathcal L}_P^{\epsilon'} =  {\mathfrak L}^\epsilon_P. 
\end{equation}
Thus $\overline{\mathfrak L}^\epsilon_P = \bigcap \overline{\mathcal L}_P^{\epsilon'}$
is a continuum that is an analytic set in a neighborhood of $\bf{0}$. 
It follows that $\mathfrak{L}_{P}^{\epsilon}$ is connected. Moreover if 
${\mathcal L}_P^{\epsilon'}$ is closed in 
${\operatorname{B}}_{\epsilon'} \setminus  \{ \bf{0} \}$  for some $\epsilon' > \epsilon$
then Equation \eqref{equ:leaves_closed}  still holds and ${\mathfrak L}^\epsilon_P$ is 
closed in $\overline{\operatorname{B}}_{\epsilon} \setminus  \{ \bf{0} \}$ and connected.
\end{remark}

The next result is a version, in our setting, of Reeb local stability theorem \cite{Wu-Reeb,Camacho-Lins_Neto:foliations}.

\begin{proposition}
\label{pro:stability}
  Consider a holomorphic foliation $\mathcal{F}$ defined in a neighborhood of the closed ball 
  $\overline{\operatorname{B}}_{\epsilon}$. 
  Let ${\mathfrak L}^\epsilon_P$ be a leaf of $\mathfrak{F}^{\epsilon}$ such that 
  $\overline{\mathfrak L}^\epsilon_P = {\mathfrak L}^\epsilon_P$ and whose holonomy group is finite.
  Then there is a fundamental system of  $\mathfrak{F}^{\epsilon}$-invariant neighborhoods of 
  ${\mathfrak L}^\epsilon_P$ in $\overline{\operatorname{B}}_{\epsilon}$.
\end{proposition}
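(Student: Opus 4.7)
The plan is to adapt Reeb's local stability theorem to the present setting with boundary. First, by Remark~\ref{rem:closed_implies_connected}, the assumption $\overline{\mathfrak{L}}^\epsilon_P = \mathfrak{L}^\epsilon_P$ implies that $\mathfrak{L}^\epsilon_P$ is connected and compact, and that for every $\epsilon'' > \epsilon$ sufficiently close to $\epsilon$, $\mathfrak{L}^\epsilon_P$ has a compact neighborhood inside the leaf $\mathcal{L}^{\epsilon''}_P$ of $\mathcal{F}^{\epsilon''}$. By the finite-holonomy hypothesis, I can further choose $\epsilon' > \epsilon$ close to $\epsilon$ so that the image $G$ of the holonomy representation $\pi_1(\mathcal{L}^{\epsilon'}_P, P) \to \mathrm{Diff}(T, P)$ is a finite group, for some transversal $T$ to $\mathcal{F}$ at $P$. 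Denote by $K$ a compact neighborhood of $\mathfrak{L}^\epsilon_P$ inside $\mathcal{L}^{\epsilon'}_P$.

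Given an arbitrary open neighborhood $\mathcal{U}$ of $\mathfrak{L}^\epsilon_P$ in $\overline{\operatorname{B}}_\epsilon$, I would next cover $K$ by finitely many foliation charts $U_1, \ldots, U_k$ contained in $\operatorname{B}_{\epsilon'}$; by compactness of $\mathfrak{L}^\epsilon_P$ these can be arranged so that $(U_1 \cup \cdots \cup U_k) \cap \overline{\operatorname{B}}_\epsilon \subset \mathcal{U}$. In each chart $U_j$ the leaves of $\mathcal{F}$ are plaques transverse to a local transversal disk $T_j$. Fixing a system of paths inside $K$ joining $P$ to representative points of each $T_j$, the plaque-to-plaque transition maps yield a finite family of germs of biholomorphisms at $(T, P)$. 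Because $G$ is finite and the above family of transition maps is finite, I can shrink $T$ to a smaller disk $T^\circ$ on which every element of $G$ and every relevant transition map is defined and has image in an arbitrarily prescribed small neighborhood of the corresponding base point; this is the essential use of finiteness. The $\mathcal{F}^{\epsilon'}$-saturation $W$ of $T^\circ$ inside $U_1 \cup \cdots \cup U_k$ is then an $\mathcal{F}^{\epsilon'}$-invariant open set containing $\mathfrak{L}^\epsilon_P$, and for $T^\circ$ small enough, $W \cap \overline{\operatorname{B}}_\epsilon \subset \mathcal{U}$.

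To conclude, I would verify that $W \cap \overline{\operatorname{B}}_\epsilon$ is $\mathfrak{F}^\epsilon$-invariant. Given $Q \in W \cap \overline{\operatorname{B}}_\epsilon$, the leaf $\mathfrak{L}^\epsilon_Q = \bigcap_{\epsilon < \epsilon''} \mathcal{L}^{\epsilon''}_Q$ is contained in the leaf of $\mathcal{F}^{\epsilon'}$ through $Q$, and this larger leaf sits inside $W$ by construction. Hence $\mathfrak{L}^\epsilon_Q \subset W \cap \overline{\operatorname{B}}_\epsilon$, which gives the desired $\mathfrak{F}^\epsilon$-invariance and the fundamental system of neighborhoods.

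The main technical obstacle is the possible tangency of $\mathfrak{L}^\epsilon_P$ with $\partial \operatorname{B}_\epsilon$, which prevents a direct invocation of the classical Reeb stability theorem. This is precisely what forces us to pass to the slightly larger radius $\epsilon'$: inside $\operatorname{B}_{\epsilon'}$ the leaf $\mathfrak{L}^\epsilon_P$ is realized as a compact subset of the honest, boundary-free leaf $\mathcal{L}^{\epsilon'}_P$, so the standard stability argument applies without modification there and can then be restricted back to $\overline{\operatorname{B}}_\epsilon$. A secondary point requiring care is that the foliation charts $U_j$ may protrude past $\partial \operatorname{B}_\epsilon$, but the inclusion $\mathfrak{L}^\epsilon_P \subset \overline{\operatorname{B}}_\epsilon$ together with the freedom to shrink $T^\circ$ ensures that only the portion of $W$ meeting $\overline{\operatorname{B}}_\epsilon$ needs to be controlled.
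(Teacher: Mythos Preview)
Your overall strategy---pass to a slightly larger radius $\epsilon'>\epsilon$, run a Reeb-type argument there, and restrict back to $\overline{\operatorname{B}}_\epsilon$---matches the paper's. The gap is in the invariance step. You cover only a compact neighborhood $K\subsetneq \mathcal{L}_P^{\epsilon'}$ of $\mathfrak{L}_P^\epsilon$ by foliation charts and then assert that the plaque-saturation $W$ of $T^\circ$ inside $\bigcup U_j$ is $\mathcal{F}^{\epsilon'}$-invariant. It is not: since the chart covering need not contain the whole leaf $\mathcal{L}_P^{\epsilon'}$, a nearby leaf $\mathcal{L}_Q^{\epsilon'}$ can exit $\bigcup U_j$, and nothing prevents $\mathfrak{L}_Q^\epsilon\subset\mathcal{L}_Q^{\epsilon'}\cap\overline{\operatorname{B}}_\epsilon$ from containing points outside $W$. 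Your sentence ``this larger leaf sits inside $W$ by construction'' is exactly the unproved assertion, and it is essentially what the proposition is trying to establish.

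The paper closes this gap by working with the \emph{full} compact leaf rather than the fragment $K$. Using Remark~\ref{rem:closed_implies_connected} one already knows $\overline{\mathcal{L}_P^{\epsilon'}}$ is compact for $\epsilon'$ close to $\epsilon$; the paper then invokes Sard's theorem to choose $\epsilon'$ so that $\mathcal{L}_P^{\epsilon'}$ is transverse to $\partial\operatorname{B}_{\epsilon'}$, whence $\overline{\mathcal{L}_P^{\epsilon'}}=\mathfrak{L}_P^{\epsilon'}$ is a compact manifold with boundary. A tubular neighborhood (via the Stein property, \cite{Forster-Ramsport:analytische,Siu:Stein}) together with the classical Reeb argument applied to this compact invariant set produces a fundamental system of $\mathfrak{F}^{\epsilon'}$-invariant neighborhoods of $\overline{\mathcal{L}_P^{\epsilon'}}$ in $\overline{\operatorname{B}}_{\epsilon'}$; intersecting with $\overline{\operatorname{B}}_\epsilon$ finishes. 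The Sard step is precisely what lets the stability argument see the whole leaf at once and hence guarantees genuine saturation, which your version of the argument lacks.
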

\begin{proof}
Denote ${\mathfrak L} = {\mathfrak L}^\epsilon_P$. 
  It suffices to show that given
an open neighborhood $V$ of $\mathfrak{L}$ in $\mathbb{C}^{n}$,
there exists an $\mathfrak{F}^{\epsilon}$-invariant neighborhood of $\mathfrak{L}$ in
$\overline{\operatorname{B}}_{\epsilon}$ contained in $V$. 
Let us consider $\epsilon''$ with $0<\epsilon'' - \epsilon<<1$. 
There exists a compact neighborhood of ${\mathfrak L}$ in ${\mathcal L}_{P}^{\epsilon''}$. 
As a consequence, the leaf ${\mathcal L}_{P}^{\epsilon'}$ is closed 
in ${\operatorname{B}}_{\epsilon'}$ if 
$\epsilon < \epsilon' << \epsilon''$. Fix such $\epsilon'$ such that the holonomy group of 
$\mathcal{L}:= {\mathcal L}_{P}^{\epsilon'}$ is finite and $\overline{\mathcal L} \subset V$.
Moreover, we can choose $\epsilon'$ such that $\mathcal{L}$ is transverse 
to 
$\partial {\operatorname{B}}_{\epsilon'}$ by Sard's theorem.
Therefore, we get $\overline{\mathcal L} ={\mathfrak L}_{P}^{\epsilon'}$.
We know that $\mathcal L$ has a tubular neighborhood, 
since it is a Stein subvariety of the (Stein) open ball, see \cite{Forster-Ramsport:analytische, Siu:Stein}. 
That is, there is an open neighborhood $U$ of ${\mathcal L}$ and a retraction-submersion
$$
r:U\rightarrow {\mathcal L}.
$$
We can assume that the fibers of $r$ are transversal to $\mathcal{F}$, by taking a smaller $U$ if necessary. 
By transversality of $\mathcal L$  and $\partial {\operatorname{B}}_{\epsilon'}$
and finiteness of the holonomy group of $\mathcal L$,
we can argue as in Reeb's stability theorem \cite{Wu-Reeb,Camacho-Lins_Neto:foliations},
applied to the compact invariant set $\overline{\mathcal L}$,
to obtain a fundamental system of $\mathfrak{F}^{\epsilon'}$-invariant neighborhoods 
of $\overline{\mathcal L}$ in $\overline{\operatorname{B}}_{\epsilon'}$.
By considering one of such neighborhoods $W$ sufficiently small, we obtain 
a  $\mathfrak{F}^{\epsilon}$-invariant neighborhood $W \cap \overline{\operatorname{B}}_{\epsilon}$
of $\mathfrak{L}$ contained in $V$.
\end{proof}
\begin{definition}
\label{def:series}
We denote by $\hat{\mathcal O}_{n}$  and ${\mathfrak m}_{n}$ the 
ring ${\mathbb C}[[x_1, \hdots, x_n]]$
of formal power series in $n$-variables
with complex coefficients and its maximal ideal respectively. 
 Denote by $\hat{K}_{n}$ the fraction field of 
$\hat{\mathcal O}_{n}$. 
We denote by ${\mathcal O}_{n}$ the subring of $\hat{\mathcal O}_{n}$ 
of convergent power series.
\end{definition}  
\begin{definition}
\label{def:formal_vf}
We denote by $\Xn{}{n}$  (resp. 
$\Xf{}{n}$) the Lie algebra of (holomorphic) germs of vector field singular
at the origin of ${\mathbb C}^{n}$ (resp. formal vector fields).
An element $X$ of $\Xn{}{n}$ (resp. $\Xf{}{n}$)  is of the form 
\[  X = f_1 (x_1, \hdots, x_n) \frac{\partial}{\partial x_1} + \ldots +  f_n (x_1, \hdots, x_n) \frac{\partial}{\partial x_n} \]
where $f_1, \ldots, f_n \in {\mathcal O}_{n} \cap {\mathfrak m}_{n}$ 
(resp. ${\mathfrak m}_{n}$). We denote by $D_0 X$ the linear part of $X$ at ${\bf 0}$.
\end{definition}
\begin{definition}
\label{def:flow}
Given $t \in {\mathbb C}$ and $X \in \Xn{}{n}$, we denote by 
$\mathrm{exp} (t X)$ the time-$t$ flow of $X$. We define the real part $\mathrm{Re} (X)$ as the real
vector field whose time-$t$ flow is equal to $\mathrm{exp} (t X)$ for any $t \in {\mathbb R}$.
Analogously, we define the imaginary part $\mathrm{Im} (X)$ as the real
vector field whose time-$t$ flow is equal to $\mathrm{exp} (i t X)$ for any $t \in {\mathbb R}$.
\end{definition}
\begin{remark}
Consider a holomorphic vector field $X=f(z) \frac{\partial}{\partial z}$.  Then 
\[ \mathrm{Re} (X) = \mathrm{Re} (f) \frac{\partial}{\partial x} + \mathrm{Im} (f) \frac{\partial}{\partial y} \ \mathrm{and} \ 
\mathrm{Im} (X) = - \mathrm{Im} (f)\frac{\partial}{\partial x} +  \mathrm{Re} (f)  \frac{\partial}{\partial y} \]
where $z = x+i y$.
\end{remark}
\begin{definition}
\label{def:formal_dif}
We denote by $\diff{}{n}$ (resp. $\diffh{}{n}$)
the group of (holomorphic) germs of diffeomorphism 
at $\cn{n}$ (resp. formal diffeomorphisms). 
An element $\phi$ of $\diff{}{n}$ (resp. $\diffh{}{n}$)  is of the form 
\[ \phi(x_1, \ldots, x_n) = (\phi_1 (x_1, \hdots, x_n), \ldots, \phi_n (x_1, \hdots, x_n) ) \]
where $\phi_1, \ldots, \phi_n \in {\mathcal O}_{n} \cap {\mathfrak m}_{n}$ 
(resp. ${\mathfrak m}_{n}$) and the linear part $D_{0} \phi$ of $\phi$ at ${\bf 0}$ is a linear isomorphism.
In both cases the operation law is the composition.
\end{definition}
\begin{remark}
Given $t \in {\mathbb C}$ and $X \in \Xn{}{n}$, the time-$t$ flow
$\mathrm{exp} (t X)$  belongs to $\diff{}{n}$.
\end{remark}
Next, we show the finiteness of holonomy groups of leaves in the completely integrable setting.
The proof relies on results in \cite{Rib:cimpa} and more precisely that the subgroup $G$ of  
$\diffh{}{q}$ of formal diffeomorphisms
preserving generically independent germs of meromorphic map  
$\overline{f}_1, \hdots, \overline{f}_q$ is Zariski-closed (a projective limit of linear algebraic groups) and
its Lie algebra 
\[ {\mathfrak g} = \{ Z \in \Xf{}{q} : Z (\overline{f}_{j})=0 \  \forall 1 \leq j \leq q \} \]
is trivial,  inducing $G$ to be finite. 
\begin{proposition}
\label{pro:finite}
Let $\mathcal{F}$ be a CI germ of foliation.
Consider $\epsilon \in (0, \epsilon_0]$ where $\epsilon_0 >0$ is given by Proposition \ref{prop:closedleaves}.
Let $\mathcal{L}_{P}^{\epsilon}$ be a leaf of $\mathcal{F}^{\epsilon}$. 
Then the image of the holonomy representation of $\mathcal{L}_{P}^{\epsilon}$ is a finite group.
\end{proposition}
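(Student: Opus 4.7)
The plan is to embed the holonomy representation into the stabilizer, in $\diffh{}{q}$, of the germs of the first integrals restricted to a transversal, and then invoke the cited finiteness result from \cite{Rib:cimpa}. First I would pick a complete first integral $(f_1, \ldots, f_q)$ of $\mathcal{F}$ and a holomorphic transversal $T$ of dimension $q$ to $\mathcal{F}$ at the regular point $P \in \mathcal{L}_{P}^{\epsilon}$. Since $f_1, \ldots, f_q$ are (meromorphic) first integrals of $\mathcal{F}$, they are constant along the leaves, so their restrictions $\overline{f}_j := (f_j)_{|T}$ are well-defined germs of meromorphic functions at $(T, P)$. The condition $df_1 \wedge \ldots \wedge df_q \not\equiv 0$, combined with transversality of $T$ to $\mathcal{F}$, ensures that $d\overline{f}_1 \wedge \ldots \wedge d\overline{f}_q \not\equiv 0$ at $(T, P)$; thus $(\overline{f}_1, \ldots, \overline{f}_q)$ is a generically independent $q$-tuple of meromorphic germs on the $q$-dimensional pointed disk $(T,P)$, which we identify with $\cn{q}$.

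Next, I would observe that every element of the holonomy representation $\pi_{1}({\mathcal L}_{P}^{\epsilon}, P) \to \diff{}{q}$ is a germ of biholomorphism of $(T,P)$ obtained by lifting a loop in ${\mathcal L}_{P}^{\epsilon}$; since such a lift stays in a leaf and hence on a common level set of each $f_j$, the resulting germ $\phi$ satisfies $\phi^{\ast} \overline{f}_j = \overline{f}_j$ for $j=1, \ldots, q$. Therefore the image $H$ of the holonomy representation is a subgroup of
\[ G := \{ \phi \in \diffh{}{q} : \phi^{\ast} \overline{f}_j = \overline{f}_j \text{ for } j=1, \ldots, q \}. \]
By the result from \cite{Rib:cimpa} recalled immediately before the proposition, $G$ is a Zariski-closed subgroup (a projective limit of linear algebraic groups) of $\diffh{}{q}$ whose Lie algebra ${\mathfrak g} = \{ Z \in \Xf{}{q} : Z(\overline{f}_j)=0 \ \forall j \}$ is trivial, because the generic independence of $\overline{f}_1, \ldots, \overline{f}_q$ forces any common formal first integral-preserving vector field to vanish. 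A Zariski-closed group with trivial Lie algebra is finite, so $G$ is finite, and hence $H \leq G$ is finite as well.

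The main obstacle is the careful verification that the restrictions $\overline{f}_1, \ldots, \overline{f}_q$ remain generically independent on $T$. Transversality of $T$ to $\mathcal{F}$ at $P$ gives independence at $P$ when the $f_j$ are holomorphic there, but some $f_j$ may be genuinely meromorphic at $P$; this is handled by replacing each $f_j$, if necessary, by $1/f_j$ or $(f_j - c)/f_k^m$ on the open dense set where $P$ lies in the domain of the resulting holomorphic representative, or more directly by noting that $d\overline{f}_1 \wedge \ldots \wedge d\overline{f}_q$ is a well-defined meromorphic $q$-form on $(T,P)$ whose non-vanishing is inherited from that of $df_1 \wedge \ldots \wedge df_q$ through the transversality of $T$ to the kernel of this form. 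Once this is in place, the embedding $H \hookrightarrow G$ and the citation of \cite{Rib:cimpa} close the argument.
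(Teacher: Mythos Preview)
Your proposal is correct and follows essentially the same approach as the paper: restrict the complete first integral to a transversal, observe that the holonomy group preserves these restricted meromorphic germs, and then invoke \cite[Proposition 3.46]{Rib:cimpa} to conclude finiteness. The paper states the generic independence of the $\bar f_j$ on $T$ in one line (``since we are in a transverse section''), whereas you spell out the potential issue with meromorphic $f_j$; this is easily settled by taking flow-box coordinates at $P$, in which each $f_j$ depends only on the transverse variables, so the non-vanishing of $d\bar f_1\wedge\cdots\wedge d\bar f_q$ is immediate.
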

\begin{proof}
Denote $\mathcal{L} =\mathcal{L}_{P}^{\epsilon}$.
Let ${\mathcal H}_{\mathcal L}\subset \diff{}{q}$ 
be the image of the holonomy representation 
$\pi_{1} (\mathcal{L},P) \to \diff{}{q}$,  
where $\cn{q}$ is identified with a germ of transverse section to $\mathcal{F}$ at $P$.
The first integrals $f_1, \ldots, f_q$
induce germs of meromorphic functions on the transversal $\cn{q}$, 
that we denote $\bar f_1,\ldots, \bar f_q$. We still have that $\bar f_1,\ldots, \bar f_q$ are generically independent, 
i.e. $d \bar f_1 \wedge \ldots \wedge d \bar f_q \not \equiv 0$,
since we are in a transverse section. Moreover they are invariant under the action of ${\mathcal H}_{\mathcal L}$. 
This implies that ${\mathcal H}_{\mathcal L}$ is contained in the group
$$
\{ F \in \diff{}{q} : \bar f_j \circ F = \bar f_j \ \mathrm{for \ any} \ 1 \leq j \leq q \} .
$$
Since the latter group is finite in view of \cite[Proposition 3.46]{Rib:cimpa}, we are done.
\end{proof}
The next result is a direct consequence of Propositions \ref{pro:stability} and \ref{pro:finite}.
\begin{corollary}
\label{cor:stability}
  Consider a germ of CI holomorphic foliation $\mathcal{F}$. Then there exists $\epsilon>0$ such that 
  any closed leaf ${\mathfrak L}^\epsilon_P$ of $\mathfrak{F}^{\epsilon}$ has
  a fundamental system of  $\mathfrak{F}^{\epsilon}$-invariant neighborhoods 
  in $\overline{\operatorname{B}}_{\epsilon}$.
  Moreover, the result holds for any $\epsilon>0$ sufficiently small.
\end{corollary}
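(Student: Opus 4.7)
The plan is to combine Proposition \ref{pro:stability} and Proposition \ref{pro:finite} in a straightforward way. First I would invoke Proposition \ref{prop:closedleaves} to obtain $\epsilon_{0}>0$ such that $\mathcal{F}$ has a representative defined in a neighborhood of $\overline{\operatorname{B}}_{\epsilon_{0}}$ and every leaf of $\mathcal{F}^{\epsilon'}$ is closed off $\mathrm{Sing}(\mathcal{F})$ for each $\epsilon'\in(0,\epsilon_{0}]$. The goal then is to check, for any $\epsilon \in (0,\epsilon_{0})$, that every closed leaf of $\mathfrak{F}^{\epsilon}$ satisfies the hypotheses of Proposition \ref{pro:stability}.

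Fix such $\epsilon$ and a closed leaf $\mathfrak{L}=\mathfrak{L}^{\epsilon}_{P}$. By definition one has
\[
\mathfrak{L}=\bigcap_{\epsilon < \epsilon' \ll \epsilon + 1} \mathcal{L}^{\epsilon'}_{P},
\]
and the closedness of $\mathfrak{L}$ yields, as in Remark \ref{rem:closed_implies_connected}, that for some $\epsilon' \in (\epsilon,\epsilon_{0}]$ the leaf $\mathcal{L}^{\epsilon'}_{P}$ is closed in $\operatorname{B}_{\epsilon'}$ and $\overline{\mathcal{L}}^{\epsilon'}_{P}$ is compact. Next, I would apply Proposition \ref{pro:finite} to this $\mathcal{L}^{\epsilon'}_{P}$: since $\mathcal{F}$ is CI and $\epsilon' \leq \epsilon_{0}$, the image of the holonomy representation $\pi_{1}(\mathcal{L}^{\epsilon'}_{P},P)\to \mathrm{Diff}(T,P)$ is finite. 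This is exactly the finiteness condition that is required in the definition of the holonomy group of $\mathfrak{L}^{\epsilon}_{P}$.

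With closedness of $\mathfrak{L}$ and finiteness of its holonomy group established, Proposition \ref{pro:stability} applied with $\epsilon$ produces a fundamental system of $\mathfrak{F}^{\epsilon}$-invariant neighborhoods of $\mathfrak{L}$ in $\overline{\operatorname{B}}_{\epsilon}$. Since $\epsilon$ was arbitrary in $(0,\epsilon_{0})$, the final clause about validity for every sufficiently small $\epsilon>0$ follows immediately.

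The only step that requires a little care (rather than being a pure invocation of the earlier results) is matching the two different notions of ``finite holonomy'': Proposition \ref{pro:finite} delivers finiteness for the leaves $\mathcal{L}^{\epsilon'}_{P}$ of $\mathcal{F}^{\epsilon'}$, whereas Proposition \ref{pro:stability} asks for finiteness of the holonomy of $\mathfrak{L}^{\epsilon}_{P}$ in the sense of the definition before Proposition \ref{pro:stability}. Because that definition is phrased precisely as the existence of some $\epsilon'>\epsilon$ for which the holonomy of $\mathcal{L}^{\epsilon'}_{P}$ is finite, the two notions agree and no additional work is needed. I therefore do not expect any genuine obstacle; the content lies entirely in the two propositions already proved.
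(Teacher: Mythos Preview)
Your proposal is correct and follows exactly the approach indicated in the paper, which states the corollary as a direct consequence of Propositions \ref{pro:stability} and \ref{pro:finite}. The only extraneous step is the appeal to Remark \ref{rem:closed_implies_connected} to obtain closedness of $\mathcal{L}^{\epsilon'}_{P}$; since Proposition \ref{pro:finite} applies to \emph{every} leaf of $\mathcal{F}^{\epsilon'}$ for $\epsilon'\in(0,\epsilon_0]$, you may simply pick any $\epsilon'\in(\epsilon,\epsilon_0]$ and invoke it directly to verify the finite-holonomy hypothesis of Proposition \ref{pro:stability}.
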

 

\section{The singular set and the set of separatrices}
\label{sec:infinite}
We are going to prove Theorem \ref{teo:infinite} in this section.
The last subsection will be devoted to Theorem \ref{teo:dicritical} that is a 
straightforward corollary of Theorem \ref{teo:infinite}.

Consider a foliation $\mathcal{F}$ satisfying the hypotheses of Theorem \ref{teo:infinite}.
Assume, aiming at contradiction, that there are finitely many separatrices. 
We are going to show that $\dim (\operatorname{Sing}(\mathcal{F})) \geq 1$, contradicting
that $\mathcal{F}$ has an isolated singularity.

If the hypotheses of Theorem \ref{teo:infinite} are satisfied for any open ball ${\operatorname{B}}_{\epsilon_{1}}$
then they also hold for any ${\operatorname{B}}_{\epsilon'}$ with $0 < \epsilon' < \epsilon_1$.
In particular, we can consider $\epsilon_0 >0$ such that the hypotheses hold for any 
 ${\operatorname{B}}_{\epsilon'}$ with $\epsilon' \in (0, \epsilon_{0}]$ and also if $0 < \epsilon' - \epsilon_{0} <<1$.
We proceed in several steps.

\strut

 \subsection{Existence of a separatrix}
We show
\begin{proposition}
  There is at least one separatrix.
\end{proposition}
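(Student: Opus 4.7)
The plan is to proceed by contradiction: assume every leaf $\mathcal{L}$ of $\mathcal{F}^{\epsilon_{0}}$ satisfies $\mathbf{0}\notin\overline{\mathcal{L}}$. Since each leaf is closed in $B_{\epsilon_{0}}\setminus\{\mathbf{0}\}$ by hypothesis and the origin is excluded from its closure, each $\mathcal{L}$ is in fact a closed complex submanifold of $B_{\epsilon_{0}}$ of pure dimension $d=\dim(\mathcal{F})$. So the entire punctured ball is partitioned by closed analytic $\mathcal{F}$-invariant $d$-dimensional leaves, every one of them ``isolated'' from the singular point.

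I would then pick a sequence $P_{n}\to\mathbf{0}$ with $P_{n}\neq\mathbf{0}$ and let $\mathcal{L}_{n}$ be the leaf through $P_{n}$. Fix an intermediate radius $\epsilon_{1}\in(0,\epsilon_{0})$; by Lelong's lower bound, $\mathrm{vol}_{2d}(\mathcal{L}_{n}\cap B_{\epsilon_{1}})\geq c>0$ uniformly in $n$, since for $n$ large the ball $B(P_{n},\epsilon_{1}-\|P_{n}\|)$ sits inside $B_{\epsilon_{1}}$ and $P_{n}\in\mathcal{L}_{n}$. The goal is to extract from $(\mathcal{L}_{n})$ a limit analytic subset $Y\subset B_{\epsilon_{1}}$ of pure dimension $d$ containing $\mathbf{0}$ —  either by viewing the $[\mathcal{L}_{n}]$ as positive closed $(d,d)$-currents and applying Bishop's compactness theorem, or by taking Hausdorff limits of the compact analytic pieces $\mathcal{L}_{n}\cap\overline{B}_{\epsilon_{1}}$ and recognizing the limit as an analytic set. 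The point $\mathbf{0}$ lies in $Y$ because $P_{n}\to\mathbf{0}$.

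The set $Y$ is $\mathcal{F}$-invariant, being a weak limit of $\mathcal{F}$-invariant analytic sets of the same dimension as the foliation. Choose an irreducible component $Y'$ of the germ $(Y,\mathbf{0})$: its smooth locus is contained in leaves of $\mathcal{F}^{\epsilon_{0}}$, and since $\mathbf{0}\in Y'$ at least one such leaf $\mathcal{L}\subset Y'$ must accumulate at $\mathbf{0}$. That leaf is a separatrix, contradicting the standing assumption and completing the proof.

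The main obstacle is producing a genuine analytic limit $Y$: the Lelong lower bound is automatic, but a uniform \emph{upper} bound on $\mathrm{vol}_{2d}(\mathcal{L}_{n}\cap\overline{B}_{\epsilon_{1}})$ is not, since closed analytic subsets of $B_{\epsilon_{0}}$ can in principle have very large volume on an intermediate ball. I would handle this by choosing $\epsilon_{1}$ via Sard so that $\partial B_{\epsilon_{1}}$ is transverse to $\mathcal{F}$ along the $\mathcal{L}_{n}$ (guaranteed for a.e.\ $\epsilon_{1}$), and exploiting closedness of the leaves in $B_{\epsilon_{0}}$ together with a Crofton/slicing argument across $\partial B_{\epsilon_{1}}$ to cap the volumes uniformly. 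A second delicate point is the identification of $Y$'s components through $\mathbf{0}$ with the closures of individual leaves, which follows once one knows $Y$ has pure dimension $d$ and is invariant, but requires some care near singular points of $Y$.
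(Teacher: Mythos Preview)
Your proposal has a genuine gap at precisely the point you flag: the uniform upper volume bound on $\mathcal{L}_{n}\cap \overline{B}_{\epsilon_1}$. Transversality of $\partial B_{\epsilon_1}$ to the leaves (obtained via Sard) together with a Crofton/slicing estimate does \emph{not} cap the volumes. Crofton relates the volume of $\mathcal{L}_n$ to the average number of intersection points with affine $q$-planes, but nothing in your hypotheses bounds that number uniformly in $n$: a closed analytic $d$-submanifold of a ball can meet a fixed transversal in arbitrarily many points, and there is no a priori degree bound on the $\mathcal{L}_n$. Closedness in $B_{\epsilon_0}$ only says these intersections are discrete, not that they are uniformly finite. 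So Bishop compactness cannot be invoked as stated, and the limit object $Y$ is not produced.

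What is missing is exactly the second hypothesis of the theorem, which you never use: finiteness of the holonomy of each closed leaf. The paper exploits it in a much more elementary way and avoids currents entirely. By Reeb local stability (Proposition~\ref{pro:stability}), every closed leaf $\mathfrak{L}_P^{\epsilon}$ has an $\mathfrak{F}^{\epsilon}$-invariant open neighborhood $W_P$ with $\mathbf{0}\notin\overline{W}_P$. Finitely many such $W_{P_j}$ cover the compact sphere $\partial B_{\epsilon}$. Any point $P$ close enough to $\mathbf{0}$ lies outside $\bigcup W_{P_j}$; invariance forces its leaf $\mathfrak{L}_P^{\epsilon}$ to miss $\bigcup W_{P_j}$ and hence $\partial B_{\epsilon}$, so $\mathfrak{L}_P^{\epsilon}$ is a compact positive-dimensional analytic subset of the ball, contradicting the maximum principle. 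If you want to salvage your current-theoretic route, the honest way to get the missing volume bound is again Reeb stability: the tubular-neighborhood structure around a leaf with finite holonomy makes nearby leaves finite covers, hence of comparable volume. But once you have invoked that, the paper's argument is already shorter.
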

\begin{proof}
Consider $\epsilon \in (0, \epsilon_0]$.  Let us remark that any leaf of $\mathfrak{F}^{\epsilon}$ is closed in 
$\overline{\operatorname{B}}_{\epsilon} \setminus \{ \bf{0} \}$ by hypothesis and Remark 
\ref{rem:aclosed_implies_connected}.
Suppose, aiming at contradiction, that there is no separatrix. Therefore, 
any leaf $\mathfrak{L}_{P}^{\epsilon}$ of $\mathfrak{F}^{\epsilon}$ is closed
and hence ${\bf 0} \not \in \overline{\mathfrak L}_{P}^{\epsilon}$.
By hypothesis, $\mathfrak{L}_{P}^{\epsilon}$ has a finite holonomy group. 
Thus, there exists a $\mathfrak{F}^{\epsilon}$-invariant neighborhood $W_P$ of $\mathfrak{L}_{P}^{\epsilon}$
in $\overline{\operatorname{B}}_{\epsilon}$ such that ${\bf 0} \not \in \overline{W}_{P}$ 
by Proposition \ref{pro:stability}. By compactness of $\partial {\operatorname{B}}_{\epsilon}$, there exist
$P_1, \ldots, P_k \in \partial {\operatorname{B}}_{\epsilon}$ such that 
$\partial {\operatorname{B}}_{\epsilon}$ is contained in $\cup_{j=1}^{k} W_{P_{j}}$.
Now, consider $P \in  {\operatorname{B}}_{\epsilon} \setminus \{ {\bf 0} \}$ 
sufficiently close to ${\bf 0}$ that does not belong to 
$\cup_{j=1}^{k} W_{P_{j}}$.
The leaf ${\mathfrak L}_{P}^{\epsilon}$ does not intersect $\cup_{j=1}^{k} W_{P_{j}}$, and hence 
$\partial {\operatorname{B}}_{\epsilon}$, since $\cup_{j=1}^{k} W_{P_{j}}$ is $\mathfrak{F}^{\epsilon}$-invariant.
Therefore, ${\mathfrak L}_{P}^{\epsilon}$ is a compact analytic subset 
of ${\operatorname{B}}_{\epsilon}$
of dimension $\dim (\mathcal{F}) \geq 1$ that is equal to ${\mathcal L}_{P}^{\epsilon}$.
We obtain a contradiction since such a set does not exist by the maximum modulus principle.
\end{proof}
\begin{remark}
Consider a germ ${\mathcal F}$ of CI foliation in $\cn{3}$ of codimension $2$.
Since ${\mathcal F}$ is tangent to a codimension one foliation, 
for instance $d f =0$ where $f$ is a non-constant meromorphic first integral, the existence of a separatrix can also be obtained,
in such a case, 
as a consequence of a theorem of Cerveau and Lins Neto \cite[Proposition 3]{Cerveau-Lins_Neto:cod2}.
\end{remark}  
\subsection{The neighborhood of the set of separatrices}
Recall that we are assuming that we have only finitely many separatrices through the origin, which we denote as 
$$
\Gamma_1,\Gamma_2,\ldots,\Gamma_\ell, \quad \ell\geq 1.
$$
Denote
$
\Sigma=\Gamma_1\cup\Gamma_2\cup\cdots\cup\Gamma_\ell
$.
\begin{remark}
Up to consider a smaller  $\epsilon_{0}>0$ if necessary, 
the boundaries of all the balls ${\operatorname{B}}_{\epsilon}$ 
transversely intersect each $\Gamma_i$, for $i=1,2,\ldots,\ell$ and the pair 
$(\overline{\operatorname{B}}_{\epsilon},\overline{\operatorname{B}}_{\epsilon} \cap \Sigma)$ is homeomorphic to the cone over the pair 
$(\partial {\operatorname{B}}_{\epsilon},\partial {\operatorname{B}}_{\epsilon} \cap \Sigma)$ for any
$0 <\epsilon \leq \epsilon_{0}$
by the local conic structure of analytic sets 
\cite[Lemma 3.2]{Burghelea-Verona:local_analytic}
\cite[Theorem 2.10]{Milnor:hypersurfaces}.
Therefore, ${\mathcal F}$ is topologically completely integrable.
Note that the intersection $\Sigma\cap \partial {\operatorname{B}}_{\epsilon}$ 
is a finite union of $\ell$  mutually disjoint connected manifolds of real dimension $2 \dim (\mathcal{F}) -1$.
\end{remark}

In this subsection we study a fundamental system of open neighborhood of $\Sigma$ and introduce 
transverse sections that will be key to understand their structure.

A {\em system of transverse sections} ${\mathcal T}_{\epsilon}=\{T^\epsilon_1, T_2^{\epsilon},\ldots,T_\ell^{\epsilon}\}$ for
 $0<\epsilon\leq \epsilon_0$ is the data of a $q$ dimensional section $T^\epsilon_j\subset {\operatorname{B}}_{\epsilon}$
  transverse to $\Sigma$ at a point $O^{\epsilon}_j\in \Gamma_j$, for each $j=1,2,\ldots,\ell$, in such a way that the $T_j^\epsilon$ are
   pairwise disjoint, they intersect $\Sigma$ in the single point $O^\epsilon_j$ 
   and they are transverse to $\mathcal{F}$. We assume that each
    $T^\epsilon_j$ is isomorphic to an open $q$ dimensional disk centered at $O^\epsilon_j$ and moreover the closure 
    $\overline T^\epsilon_j$ is isomorphic to the closed disc with boundary contained in 
    ${\operatorname{B}}_{\epsilon} \setminus \Sigma$.
    
 Next, we show the existence of a fundamental system of neighborhoods of $\Sigma$ in ${\operatorname{B}}_{\epsilon}$ 
 and a system of transverse sections.
\begin{definition}
\label{def:h_m}
Given a metric space $(M,d)$, we define $H(M)$ as the space of non-empty bounded closed subsets of $M$.  Given 
$A, B \in H(M)$, we define 
\[ {\mathfrak d}(A, B) = \inf \{ \epsilon \in \mathbb{R}^{+} : A \subset \cup_{P \in B}  \operatorname{B}(P; \epsilon) \ \mathrm{and} \ 
B \subset \cup_{P \in A}  \operatorname{B}(P; \epsilon)  \} . \]
Then $(H(M), {\mathfrak d})$ is a metric space \cite[Theorem 4.2]{Nadler:continuum} 
and the induced topology on $H(M)$
is the so called 
{\it Hausdorff topology}.
\end{definition}   
\begin{proposition} 
\label{pro:transv_sections}
Let us fix a system of transversal sections ${\mathcal T}_{\epsilon}$ 
for any $0<\epsilon\leq \epsilon_0$. 
We can select a fundamental system 
${\mathcal U}^\epsilon=\{U^\epsilon_m\}_{m\geq k(\epsilon)}$ of invariant open and 
connected neighborhoods of 
$\Sigma$ in $\overline{\operatorname{B}}_{\epsilon}$, such that:
\begin{enumerate}
\item For any $m\geq l\geq k(\epsilon)$, we have $U^\epsilon_m\subset U^\epsilon_{l}$.
\item  The leaves in $U^\epsilon_m$ intersect transversely the boundary $\partial {\operatorname{B}}_{\epsilon}$.
 \item The boundaries of the $T^\epsilon_j$ do not intersect $U^\epsilon_m$.
 \item Each connected component $\Delta$ of $U^\epsilon_m\cap \partial {\operatorname{B}}_{\epsilon}$ contains exactly one of the sets $\Gamma_j\cap \partial {\operatorname{B}}_{\epsilon}$ for $j=1,2,\ldots,\ell$.
 \end{enumerate}
 Moreover, if $0<\epsilon<\epsilon'\leq \epsilon_0$, we have  we have that  $U^\epsilon_m\subset U^{\epsilon'}_m$ 
 for any $m\geq \max (k(\epsilon),k(\epsilon'))$.
\end{proposition}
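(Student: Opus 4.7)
The plan is to produce, for each $\epsilon \in (0, \epsilon_0]$, a decreasing sequence of $\mathfrak{F}^\epsilon$-invariant open connected neighborhoods of $\Sigma$ in $\overline{\operatorname{B}}_\epsilon$ that shrinks down to $\Sigma$, and then to verify the bookkeeping conditions (2)--(4) by local topological choices. The core analytic content is to show that $\Sigma$ admits a fundamental system of \emph{saturated} open neighborhoods, which is where the hypotheses of Theorem \ref{teo:infinite}, combined with Proposition \ref{pro:stability}, come in.

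For the saturation claim, I would fix any open $V \supset \Sigma$ in $\overline{\operatorname{B}}_\epsilon$ and consider the compact set $K := \overline{\operatorname{B}}_\epsilon \setminus V$, which is disjoint from $\Sigma$. For each $P \in K$, the leaf $\mathfrak{L}_P^\epsilon$ is not a separatrix, so $\mathbf{0} \notin \overline{\mathfrak{L}_P^\epsilon}$; combined with the hypothesis that leaves are closed in $\operatorname{B}_\epsilon \setminus \{\mathbf{0}\}$, Remark \ref{rem:aclosed_implies_connected} shows that $\mathfrak{L}_P^\epsilon$ is closed in $\overline{\operatorname{B}}_\epsilon$. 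Its holonomy group is finite by hypothesis, so Proposition \ref{pro:stability} produces a fundamental system of invariant neighborhoods. Since these neighborhoods arise from the Reeb-type construction as tubular unions of nearby compact leaves, we can select an invariant open $W_P \supset \mathfrak{L}_P^\epsilon$ whose closure is again a union of (closed) leaves, so that $\overline{W_P}$ is itself saturated and disjoint from $\Sigma$. Covering $K$ by finitely many $W_{P_1},\ldots, W_{P_s}$ and setting $U := \overline{\operatorname{B}}_\epsilon \setminus \overline{W_{P_1} \cup \ldots \cup W_{P_s}}$ yields an open saturated neighborhood of $\Sigma$ contained in $V$.

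Given this, I would first pick a decreasing sequence of connected open $V_m \supset \Sigma$ in $\overline{\operatorname{B}}_{\epsilon_0}$ with $\bigcap_m V_m = \Sigma$, arranged so that each $V_m$ is disjoint from $\bigcup_j \partial T^{\epsilon_0}_j$, intersects $\partial \operatorname{B}_{\epsilon_0}$ in exactly $\ell$ connected components (one around each $\Gamma_j \cap \partial \operatorname{B}_{\epsilon_0}$), and sits in the region where $\mathcal{F}$ is transverse to $\partial \operatorname{B}_{\epsilon_0}$ (which is an open neighborhood of $\Sigma \cap \partial \operatorname{B}_{\epsilon_0}$ by the transversality of the separatrices, recalled in the remark above, plus continuity of tangent spaces). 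The saturation claim then furnishes an $\mathfrak{F}^{\epsilon_0}$-invariant open $U \subset V_m$; replacing $U$ by its connected component containing the connected set $\Sigma$ preserves openness and invariance and yields $U^{\epsilon_0}_m$. For $\epsilon < \epsilon_0$, define $U^\epsilon_m := U^{\epsilon_0}_m \cap \overline{\operatorname{B}}_\epsilon$, which is still $\mathfrak{F}^\epsilon$-invariant because $\mathfrak{L}^\epsilon_P \subset \mathfrak{L}^{\epsilon_0}_P \subset U^{\epsilon_0}_m$ for any $P$ in the intersection; properties (2)--(4) are inherited from the corresponding properties of $V_m$, combined with the $\epsilon$-dependent transverse sections, possibly after discarding initial terms of the sequence, which determines $k(\epsilon)$. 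The monotonicity $U^\epsilon_m \subset U^{\epsilon'}_m$ for $\epsilon < \epsilon'$ follows immediately from this definition.

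The main obstacle is the saturation step: passing from invariant neighborhoods of individual closed leaves (from Proposition \ref{pro:stability}) to an invariant neighborhood of $\Sigma$ requires that the closures $\overline{W_P}$ be themselves saturated, which in turn relies on $W_P$ being a genuine Reeb foliated tube rather than an arbitrary invariant open set. A secondary subtlety is matching the $\epsilon$-dependent transverse sections $T^\epsilon_j$ with the radial restrictions $U^{\epsilon_0}_m \cap \overline{\operatorname{B}}_\epsilon$, which forces $k(\epsilon)$ to grow as $\epsilon$ shrinks but is ultimately controllable because each $\partial T^\epsilon_j$ is compact and disjoint from $\Sigma$.
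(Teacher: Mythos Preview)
Your saturation step is essentially the paper's argument: both exhaust the complement of a neighborhood of $\Sigma$ by a compact set, cover it by invariant Reeb tubes via Proposition \ref{pro:stability}, and take the complement. The paper phrases this as lower-semicontinuity of $P \mapsto d(\mathfrak{L}_P^{\epsilon_0}, \Sigma)$ on the compact set $C_m = \{P : d(P,\Sigma) \geq 1/m\}$, which cleanly avoids your ``$\overline{W_P}$ is saturated'' step, but the content is the same. Your definition of $U^\epsilon_m$ as the full intersection $U^{\epsilon_0}_m \cap \overline{\operatorname{B}}_\epsilon$ should be replaced by its connected component containing $\Sigma$, as in the paper, since the intersection need not be connected.

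There is, however, a genuine gap in your treatment of property (4). You assert that (4) is ``inherited from the corresponding properties of $V_m$'', but inclusion $U^{\epsilon_0}_m \subset V_m$ only forces each connected component of $U^{\epsilon_0}_m \cap \partial \operatorname{B}_{\epsilon_0}$ to lie inside one of the $\ell$ components of $V_m \cap \partial \operatorname{B}_{\epsilon_0}$; it does not prevent $U^{\epsilon_0}_m \cap \partial \operatorname{B}_{\epsilon_0}$ from having \emph{more} than $\ell$ components. The paper handles this with a substantive argument: once transversality (2) holds, the map $P \mapsto \mathfrak{L}_P^\epsilon$ is continuous from $U^\epsilon_m \setminus \Sigma$ into the Hausdorff space $H(\overline{\operatorname{B}}_\epsilon)$, and hence so is $\Lambda_j : P \mapsto \mathfrak{L}_P^\epsilon \cap S_{m,j}$ for each small tube $S_{m,j}$ around $\Gamma_j \cap \partial \operatorname{B}_\epsilon$. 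Since $U^\epsilon_m \setminus \Sigma$ is connected and $\Lambda_j$ takes values in $(2\dim \mathcal{F} - 1)$-manifolds, one shows $\Lambda_j(U^\epsilon_m \setminus \Sigma)$ lands entirely in the single connected component $\mathcal{C}_j$ of $U^\epsilon_m \cap S_{m,j}$ containing $\Gamma_j \cap \partial \operatorname{B}_\epsilon$, forcing $U^\epsilon_m \cap S_{m,j} = \mathcal{C}_j$ to be connected. This step uses the saturation and transversality in an essential way and is not a matter of ``local topological choices'' for $V_m$.
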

 \begin{proof}
 Let us consider a standard euclidean distance $\lambda_{\overline{\operatorname{B}}_{\epsilon_{0}}}$ 
 in the closed ball $\overline{\operatorname{B}}_{\epsilon_{0}}$ and consider the sequence 
 $$
 C_m=\{P\in \overline{\operatorname{B}}_{\epsilon_{0}};
 \lambda_{\overline{\operatorname{B}}_{\epsilon_{0}}}(P, \Sigma)\geq 1/m
  \}
 $$
 of compact sets 
 and let $A_m$ be the $\mathfrak{F}^{\epsilon_{0}}$-saturation of $C_m$  
 in the closed ball $\overline{\operatorname{B}}_{\epsilon_{0}}$, 
 that is $A_m$ is the union of the leaves ${\mathfrak L}_{P}^{\epsilon_0}$, for $P\in C_m$. 
 Let us see that there is a positive constant $0<\mu_m\leq 1/m$ such that 
 $\lambda_{\overline{\operatorname{B}}_{\epsilon_{0}}}(P,\Sigma)\geq\mu_m$ 
 for any $P\in A_m$. Indeed, the function
 $$
 C_m\rightarrow {\mathbb R}_{>0};\quad P\mapsto \lambda_{\overline{\operatorname{B}}_{\epsilon_{0}}}({\mathfrak L}_{P}^{\epsilon_{0}},\Sigma)
 $$
 is lower-semicontinuous, in view of Proposition \ref{pro:stability} and the closedness of the leaves. Hence,  
 it has positive minimum $\mu_m$, since $C_m$ is compact. Consider
 $$
 V_m= \overline{\operatorname{B}}_{\epsilon_{0}} \setminus A_m;
 $$
 it is a  neighborhood of $\Sigma$, since it contains the points $P$ such that
  $\lambda_{\overline{\operatorname{B}}_{\epsilon_{0}}}(P,\Sigma) <\mu_m$.
 Moreover, $V_{m} \setminus \Sigma$ is open in $\overline{\operatorname{B}}_{\epsilon_{0}}$
 by Proposition  \ref{pro:stability} and hence $V_m$ is open in $\overline{\operatorname{B}}_{\epsilon_{0}}$.
 We obtain that the family $\{V_m\}_{m\geq 1}$ is a fundamental system of 
 $\mathfrak{F}^{\epsilon_{0}}$-invariant open neighborhoods 
 of $\Sigma \cap  \overline{\operatorname{B}}_{\epsilon_{0}}$, since   
 $$
 V_m\subset \{P \in \overline{\operatorname{B}}_{\epsilon_{0}}
 ;\lambda_{\overline{\operatorname{B}}_{\epsilon_{0}}}(P,\Sigma)<1/m\}.
 $$
    Now, 
 we define $U^{\epsilon_0}_m$ as the connected component of $V_m$ containing $\Sigma$.
 It is an open subset of  $\overline{\operatorname{B}}_{\epsilon_{0}}$.
Let us consider $0<\epsilon\leq \epsilon_0$. We define $U_m^{\epsilon}$ to be the connected component of $\Sigma$ in 
 $U_m^{\epsilon_0}\cap \overline{\operatorname{B}}_{\epsilon}$. Taking $k(\epsilon) \in \mathbb{R}^{+}$ 
 big enough, we obtain the desired properties except (2) and (4).

Let us fix $0<\epsilon\leq \epsilon_0$. 
 By compactness, there is a sequence $\alpha_m\rightarrow 0$, with $\alpha_m>0$ such that
 all the points in $U_{m}^{\epsilon} \cap \partial {\operatorname{B}}_{\epsilon}$ 
 are at a distance smaller than $\alpha_m$ from $\Sigma\cap\partial {\operatorname{B}}_{\epsilon}$.
 Then, for $m>>0$  all the leaves in $U_{m}^{\epsilon}$ intersect transversely 
 the boundary $\partial {\operatorname{B}}_{\epsilon}$ providing (2). 
  In view of the transversality property, the map
 $$
 U_{m}^{\epsilon} \setminus \Sigma \rightarrow H(\overline{\operatorname{B}}_{\epsilon});\quad P\mapsto 
 {\mathfrak L}_{P}^{\epsilon}
 $$
  is continuous
  by Proposition \ref{pro:finite}.

Let us show (4). Given $m>>0$, the sets   
 \[ S_{m,j} := \{ P \in  U_{m}^{\epsilon} \cap \partial {\operatorname{B}}_{\epsilon} : 
 \lambda_{\overline{\operatorname{B}}_{\epsilon_{0}}} (P, \Gamma_{j} 
 \cap \partial {\operatorname{B}}_{\epsilon}) < \alpha_{m} \} \]
 are pairwise disjoint for $j=1, \hdots, \ell$. Fix $1 \leq j \leq \ell$. We deduce that the map
 $$
 \Lambda_{j} : U_{m}^{\epsilon} \setminus \Sigma \rightarrow H(\overline{\operatorname{B}}_{\epsilon});
 \quad P\mapsto  {\mathfrak L}_{P}^{\epsilon} \cap S_{m,j}
 $$
 is continuous.
 Moreover, $\Lambda_j (P)$ is a finite union of manifolds of real dimension $2 \dim ({\mathcal F}) - 1$
 for any $P \in U_{m}^{\epsilon} \setminus \Sigma$. Let $\mathcal{C}_j$ be the 
 connected component of $U^{\epsilon}_m \cap S_{m,j}$ containing 
 $\Gamma_j \cap \partial {\operatorname{B}}_{\epsilon}$; it 
 is a neighborhood of $\Gamma_j \cap \partial {\operatorname{B}}_{\epsilon}$
 in $\partial {\operatorname{B}}_{\epsilon}$. Note that there exists 
 $Q \in U^{\epsilon}_m \setminus \Sigma$ such that $\Lambda_j (Q) \subset \mathcal{C}_j$, for instance 
 any $Q \in U_{l}^{\epsilon} \setminus \Sigma$ with $l>>m$.
  The continuity of $\Lambda_j$ and 
   $\Lambda_{j} (Q) \subset \mathcal{C}_{j}$  imply   
 $\Lambda_{j} (U^{\epsilon}_m \setminus \Sigma) \subset  \mathcal{C}_{j}$. Since
 \[ \mathcal{C}_{j} \subset U^{\epsilon}_m \cap S_{m,j} 
 = \Lambda_{j} (U^{\epsilon}_m \setminus \Sigma) \cup 
 (\Gamma_j \cap \partial {\operatorname{B}}_{\epsilon}) \subset \mathcal{C}_j, \]
 the set $U^{\epsilon}_m \cap S_{m,j}$ is connected. This completes the proof of (4).
 \end{proof}
 Let us see that the saturation of a transverse section, together with $\Sigma$, provides a neighborhood of $\Sigma$.  
 \begin{proposition}
 \label{pro:saturated}
Consider $0 < \epsilon \leq \epsilon_0$ and $m \geq k(\epsilon)$.
Then the $\mathfrak{F}^{\epsilon}$-saturated $S_{s}^{\epsilon}$ of $T_{s}^{\epsilon}$
contains the set 
$(U_{m}^{\epsilon} \setminus \Sigma)\cup ((\Gamma_s \cap \overline{\operatorname{B}}_{\epsilon} )\setminus \{\mathbf{0}\})$
for any $s=1,2,\ldots,\ell$.
\end{proposition}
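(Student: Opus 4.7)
The strategy is to fix $s \in \{1, \ldots, \ell\}$ and consider the set $\Omega_s := \{P \in U_m^\epsilon \setminus \Sigma : \mathfrak{L}_P^\epsilon \cap T_s^\epsilon \neq \emptyset\}$, then show $\Omega_s = U_m^\epsilon \setminus \Sigma$ by a connectedness argument. The inclusion of $(\Gamma_s \cap \overline{\operatorname{B}}_\epsilon) \setminus \{\mathbf{0}\}$ in $S_s^\epsilon$ is automatic, because $\Gamma_s \setminus \{\mathbf{0}\}$ is precisely the leaf of $\mathfrak{F}^\epsilon$ through $O_s^\epsilon \in T_s^\epsilon$, so the bulk of the work is the saturation containment $U_m^\epsilon \setminus \Sigma \subset S_s^\epsilon$.

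I would first verify three elementary properties. $\Omega_s$ is open in $U_m^\epsilon \setminus \Sigma$ by the standard flow-box description of $\mathcal{F}$ at any $Q \in \mathfrak{L}_P^\epsilon \cap T_s^\epsilon$, which shows that leaves through points near $P$ meet $T_s^\epsilon$ at points near $Q$. It is nonempty, because points of $U_m^\epsilon$ sufficiently close to $O_s^\epsilon \in \Gamma_s$ lie on leaves that meet $T_s^\epsilon$ transversely. Finally, $U_m^\epsilon \setminus \Sigma$ is connected: $U_m^\epsilon$ is open and connected of real dimension $2n$, while the analytic set $\Sigma$ has real codimension $2q \geq 4$, whose removal cannot disconnect $U_m^\epsilon$.

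The hardest step, and the one where I expect the main obstacle, is the closedness of $\Omega_s$ in $U_m^\epsilon \setminus \Sigma$. Given $P_k \to P$ with $P_k \in \Omega_s$ and $P \in U_m^\epsilon \setminus \Sigma$, I would pick $Q_k \in \mathfrak{L}_{P_k}^\epsilon \cap T_s^\epsilon$ and extract a subsequence $Q_k \to Q \in \overline{T_s^\epsilon}$. To place $Q$ in the interior $T_s^\epsilon$ rather than $\partial T_s^\epsilon$, I would strengthen property (3) of Proposition \ref{pro:transv_sections} by enlarging $k(\epsilon)$ so that $\overline{U_m^\epsilon} \cap \partial T_s^\epsilon = \emptyset$ for every $s$; this is possible because $U_m^\epsilon \subset \{P : \lambda_{\overline{\operatorname{B}}_{\epsilon_0}}(P, \Sigma) < 1/m\}$ by the construction in Proposition \ref{pro:transv_sections}, while each $\partial T_s^\epsilon$ is compact and disjoint from $\Sigma$, hence at positive distance from it. To obtain $Q \in \mathfrak{L}_P^\epsilon$, I would invoke the Hausdorff continuity of the map $P \mapsto \mathfrak{L}_P^\epsilon$ on $U_m^\epsilon \setminus \Sigma$ established in the proof of Proposition \ref{pro:transv_sections} (which rests on the finite holonomy from Proposition \ref{pro:finite} and the transversality to $\partial \operatorname{B}_\epsilon$), using that $\mathfrak{L}_P^\epsilon$ is closed in $\overline{\operatorname{B}}_\epsilon$: the condition $P \notin \Sigma$ forces $\mathfrak{L}_P^\epsilon$ to be a non-separatrix leaf, so $\mathbf{0} \notin \overline{\mathfrak{L}_P^\epsilon}$. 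Since $\Omega_s$ is then nonempty, open, and closed in the connected set $U_m^\epsilon \setminus \Sigma$, it equals $U_m^\epsilon \setminus \Sigma$, completing the proof.
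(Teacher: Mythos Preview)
Your proof is correct and follows essentially the same connectedness argument as the paper: define the set of points whose $\mathfrak{F}^{\epsilon}$-leaf meets $T_s^{\epsilon}$, show it is nonempty, open, and closed in $U_m^{\epsilon}\setminus\Sigma$, and handle $\Gamma_s$ separately. The only inefficiency is your proposed strengthening of property~(3) to $\overline{U_m^{\epsilon}}\cap\partial T_s^{\epsilon}=\emptyset$: it is unnecessary, because once you know $Q\in\mathfrak{L}_P^{\epsilon}$ by Hausdorff continuity, the $\mathfrak{F}^{\epsilon}$-invariance of $U_m^{\epsilon}$ forces $Q\in U_m^{\epsilon}$, and then property~(3) as stated already gives $Q\notin\partial T_s^{\epsilon}$; the paper exploits this by simply observing that $\{P:\mathfrak{L}_P^{\epsilon}\cap T_s^{\epsilon}\neq\emptyset\}=\{P:\mathfrak{L}_P^{\epsilon}\cap\overline{T}_s^{\epsilon}\neq\emptyset\}$ on $U_m^{\epsilon}\setminus\Sigma$.
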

\begin{proof}
 Consider the set
$$
A=\{P\in U_{m}^{\epsilon} \setminus \Sigma;\; {\mathfrak L}_{P}^{\epsilon} \cap T_{s}^{\epsilon} \ne \emptyset\}.
$$
Note that $A=\{P\in U_{m}^{\epsilon} \setminus \Sigma;\; {\mathfrak L}_{P}^{\epsilon} \cap \overline{T}_{s}^{\epsilon} \ne \emptyset\}$
since $\partial T_{s}^{\epsilon} \cap U_{m}^{\epsilon} = \emptyset$ (Proposition \ref{pro:transv_sections}).
Since the map
 $$
 \Lambda_{j} : U_{m}^{\epsilon} \setminus \Sigma \rightarrow H( \overline{T}_{s}^{\epsilon} );
 \quad P\mapsto  {\mathfrak L}_{P}^{\epsilon} \cap T_{s}^{\epsilon}
 $$
 is continuous by Proposition \ref{pro:stability}, it follows that 
$A$ is open and closed in the connected set $U_{m}^{\epsilon} \setminus \Sigma$ and
hence $A=U_{m}^{\epsilon} \setminus \Sigma$.  
We end the proof by noting that 
$( \Gamma_s \cap \overline{\operatorname{B}}_{\epsilon}  )\setminus\{\mathbf{0}\}$ 
is the $\mathfrak{F}^{\epsilon}$-saturated of $\{O_{s}^{\epsilon} \}$.
\end{proof}
  
\subsection{Construction of holomorphic first integrals of ${\mathcal F}$} 
  In this section we are going to build a first integral in some $ {\operatorname{B}}_{\epsilon}$ whose
level sets, besides $\Sigma$, have leaves of the form ${\mathcal L}_{P}^{\epsilon}$ as connected components. 
 \begin{proposition}
 \label{pro:submersiveintegrals}
 Let $1 \leq s \leq \ell$. Then there exist $\epsilon \in (0, \epsilon_0)$  and
 a holomorphic map
 $
 H_{s}:  {\operatorname{B}}_{\epsilon}  \rightarrow T_{s}^{\epsilon_{0}}
 $
 such that 
 \begin{itemize}
 \item $H_{s}(P)\in{\mathfrak L}_{P}^{\epsilon_{0}}\cap T_{s}^{\epsilon_{0}}$ 
 for any $P\in  {\operatorname{B}}_{\epsilon}  \setminus \Sigma$;
  \item $H_{s} ({\mathcal L}_{P}^{\epsilon}) = \{ H_{s}(P) \}$ for any $P\in  {\operatorname{B}}_{\epsilon}  \setminus \Sigma$;
 \item $H_{s}(\Sigma)=\{O_{s}^{\epsilon_{0}}\}$;
 \item $H_{s}$ is a submersion on ${\operatorname{B}}_{\epsilon} \setminus \Sigma$.
 \end{itemize}
 \end{proposition}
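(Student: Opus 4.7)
The strategy is to build $H_s$ in three stages: locally near $T_s^{\epsilon_0}$ using the product structure of $\mathcal{F}$ transverse to $T_s^{\epsilon_0}$, globally over $\operatorname{B}_\epsilon \setminus \Sigma$ via a simple-connectedness argument, and then across $\Sigma$ by Riemann's removable singularity theorem. The codimension hypothesis $q \geq 2$ enters decisively in the middle stage.

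Since $T_s^{\epsilon_0}$ is transverse to $\mathcal{F}$, a standard flow-box argument produces an open tubular neighborhood $V$ of $T_s^{\epsilon_0}$ in $\operatorname{B}_{\epsilon_0}$ biholomorphic to $T_s^{\epsilon_0} \times D$, where $D$ is an $(n-q)$-dimensional polydisk and the plaques of $\mathcal{F}|_V$ are the slices $\{p\} \times D$. On $V$ we define $H_s$ as the projection onto the first factor; this map is a holomorphic submersion, constant on plaques, and restricts to the identity on $T_s^{\epsilon_0}$. Choose $\epsilon \in (0, \epsilon_0)$ small enough that $\operatorname{B}_\epsilon \subset U_m^{\epsilon_0}$ for some $m \geq k(\epsilon_0)$, which is possible because $\mathbf{0} \in \Sigma$ and $\{U_m^{\epsilon_0}\}_m$ is a fundamental system of neighborhoods of $\Sigma$. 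By Proposition~\ref{pro:saturated}, every $P \in \operatorname{B}_\epsilon \setminus \Sigma$ has a leaf $\mathfrak{L}_P^{\epsilon_0}$ meeting $T_s^{\epsilon_0}$. The closed-leaf and finite-holonomy hypotheses of Theorem~\ref{teo:infinite} allow us to apply Proposition~\ref{pro:stability} to $\mathfrak{L}_P^{\epsilon_0}$, yielding a tubular neighborhood of it, along which leaf-transport to $V$ provides local holomorphic candidates for $H_s(P)$.

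The main obstacle is global single-valuedness: the finite set $\mathfrak{L}_P^{\epsilon_0} \cap T_s^{\epsilon_0}$ may contain several points, and leaf-transport along different paths gives different candidates. To resolve this, consider the topological space $E = \{(P, Q) \in (\operatorname{B}_\epsilon \setminus \Sigma) \times T_s^{\epsilon_0} : Q \in \mathfrak{L}_P^{\epsilon_0}\}$ together with its projection $\pi \colon E \to \operatorname{B}_\epsilon \setminus \Sigma$. The Hausdorff-continuity of $P \mapsto \mathfrak{L}_P^{\epsilon_0} \cap T_s^{\epsilon_0}$ (a variant of the continuity argument in the proof of Proposition~\ref{pro:saturated}) combined with transversality of $T_s^{\epsilon_0}$ to $\mathcal{F}$ makes $\pi$ a finite topological covering with constant fiber cardinality. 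Here the codimension hypothesis enters crucially: since $\Sigma \subset \operatorname{B}_\epsilon$ has complex codimension $q \geq 2$, hence real codimension at least $4$, the inclusion $\operatorname{B}_\epsilon \setminus \Sigma \hookrightarrow \operatorname{B}_\epsilon$ induces an isomorphism on $\pi_1$, so $\operatorname{B}_\epsilon \setminus \Sigma$ is simply connected. Thus $\pi$ is a trivial cover and admits a global continuous section, which by the local flow-box structure is automatically holomorphic, giving $H_s \colon \operatorname{B}_\epsilon \setminus \Sigma \to T_s^{\epsilon_0}$ with the first, second, and fourth properties.

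To conclude, extend $H_s$ across $\Sigma$. As $Q \to \mathbf{0}$ within $\operatorname{B}_\epsilon \setminus \Sigma$, any Hausdorff-limit of $\mathfrak{L}_Q^{\epsilon_0}$ is a compact invariant analytic set passing through $\mathbf{0}$, so it lies in $\Sigma$ by the finiteness of the separatrix set, forcing $\mathfrak{L}_Q^{\epsilon_0} \cap T_s^{\epsilon_0}$ to accumulate into $\Sigma \cap T_s^{\epsilon_0} = \{O_s^{\epsilon_0}\}$, whence $H_s(Q) \to O_s^{\epsilon_0}$. Since $\Sigma \cap \operatorname{B}_\epsilon$ is analytic of positive codimension and $H_s$ is bounded, Riemann's removable singularity theorem extends $H_s$ holomorphically to $\operatorname{B}_\epsilon$ with $H_s(\mathbf{0}) = O_s^{\epsilon_0}$. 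Because the extended $H_s$ is constant on the leaves in $\operatorname{B}_\epsilon$ and each $\Gamma_j$ is connected and passes through $\mathbf{0}$, the extension equals $O_s^{\epsilon_0}$ on all of $\Sigma \cap \operatorname{B}_\epsilon$, giving the third property and completing the construction.
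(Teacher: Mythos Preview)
Your argument is correct and follows essentially the same route as the paper: construct multivalued holonomy-transport maps into $T_s^{\epsilon_0}$, use the simple-connectedness of $\operatorname{B}_\epsilon\setminus\Sigma$ (coming from $\operatorname{codim}\Sigma=q\geq 2$) to select a single-valued branch, and then extend across $\Sigma$ by Riemann's theorem. Your covering-space formulation of the monodromy problem is equivalent to the paper's direct analytic-continuation of the finitely many local branches $\theta_1,\dots,\theta_k$.

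One small point worth tightening: in the extension step you assert that a Hausdorff limit of the leaves $\mathfrak{L}_Q^{\epsilon_0}$ is a compact invariant \emph{analytic} set. Analyticity of such a limit is neither obvious nor needed, and the invariance also requires a word of justification near $\partial\operatorname{B}_{\epsilon_0}$. The paper sidesteps this entirely: since the $U_m^{\epsilon_0}$ form a fundamental system of $\mathfrak{F}^{\epsilon_0}$-invariant neighborhoods of $\Sigma$ (Proposition~\ref{pro:transv_sections}), any $Q$ close to $\Sigma$ lies in $U_m^{\epsilon_0}$, hence $H_s(Q)\in\mathfrak{L}_Q^{\epsilon_0}\cap T_s^{\epsilon_0}\subset U_m^{\epsilon_0}\cap T_s^{\epsilon_0}$, which forces $H_s(Q)\to O_s^{\epsilon_0}$ directly. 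This gives continuity at every point of $\Sigma$ at once, after which Riemann's theorem applies.
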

 \begin{proof} 
  Consider $s=1$ without lack of generality. 
Denote  $T=T_{1}^{\epsilon_{0}}$ 
and consider $U := U_{m}^{\epsilon_{0}}$ for $m >>0$.
 We start by building a multivalued holomorphic map 
 $H\vert_{U \setminus \Sigma}:U \setminus \Sigma  \rightarrow T$. 
 Given $P\in U \setminus \Sigma$, consider a path 
 $\gamma : [0,1] \to {\mathfrak L}_{P}^{\epsilon_{0}}$ joining $P$ with $T$ 
 inside the leaf ${\mathfrak L}_{P}^{\epsilon_{0}}$, i.e. $\gamma(0) =P$ and $\gamma (1) \in T$. 
 Denote the corresponding holonomy map as
 $$
 \theta_{\gamma, P}: W_{\gamma,P}\rightarrow T,
 $$
where $W_{\gamma,P}$ is a suitable open neighborhood of $P$.
By construction it is a submersion such that $\theta_{\gamma, P} (Q) \in {\mathfrak L}_{Q}^{\epsilon_{0}} \cap T$
for any $Q \in W_{\gamma,P}$.
In view of the finiteness of the holonomy of ${\mathfrak L}_{P}^{\epsilon_{0}}$, 
there is an open neighborhood $W$ of $P$ and a finite set of submersive maps
$$
\theta_1,\theta_2,\ldots,\theta_k:W\rightarrow T
$$
such that for any path  $\gamma : [0,1] \to {\mathfrak L}_{P}^{\epsilon_{0}}$, 
with $\gamma(0) =P$ and $\gamma (1) \in T$, 
we have that $W\subset W_{\gamma,P}$ and there is an index $l \in \{1,2,\ldots,k\}$ with
$$
\theta_{\gamma, P}\vert_W= \theta_{l}.
$$
Moreover, the maps $\theta_j$, $j=1,2,\ldots,k$, are valid in the above sense for any point $Q\in W$ 
(use the existence of a tubular neighborhood of $\mathfrak{L}_{P}^{\epsilon_{0}}$). 
The set of holonomy maps ${\theta_1,\theta_2,\ldots,\theta_k}$ being finite, 
we can make analytic continuation of each $\theta_i$, $i=1,2,\ldots,k$ along paths in 
$U \setminus \Sigma$. 
Note that $k$ is just the cardinal of 
${\mathfrak L}_{Q}^{\epsilon_{0}} \cap T$ for a generic $Q \in U$.
   
There exists a ball ${\operatorname{B}}_{\epsilon}$ contained in $U$. 
Noting that ${\operatorname{B}}_{\epsilon} \setminus \Sigma$ is simply connected, 
since $\mathrm{cod} (\Sigma) \geq 2$, we obtain holomorphic submersions
$$
\theta_i: {\operatorname{B}}_{\epsilon} \setminus \Sigma\rightarrow T \setminus \{ O_{1}^{\epsilon_{0}} \}
$$
such that $\theta_{i}(P)\in{\mathfrak L}_{P}^{\epsilon_{0}}\cap T$
for all $P \in  {\operatorname{B}}_{\epsilon} \setminus \Sigma$ and $i=1,2,\ldots,k$.

Fix $1 \leq i \leq k$. 
By defining $\theta_{i} (P) = O_{1}^{\epsilon_{0}}$ for $P \in {\operatorname{B}}_{\epsilon} \cap \Sigma$, 
we obtain a continuous extension of 
$\theta_i$ to ${\operatorname{B}}_{\epsilon}$ by Proposition \ref{pro:transv_sections}.
We deduce that $\theta_{i}: {\operatorname{B}}_{\epsilon} \to T$ is holomorphic 
by Riemann extension theorem (cf. \cite[Theorem D.2]{Gunning1}).
Given $P\in  {\operatorname{B}}_{\epsilon} \setminus\Sigma$ and by  
considering analytic continuation along paths in 
$\mathcal{L}_{P}^{\epsilon}$, we obtain that $\theta_{i} (\mathcal{L}_{P}^{\epsilon}) = \{ \theta_{i} (P) \}$.
Indeed, if $\theta_i = \theta_{\gamma, P}$ and $\beta:[0,1] \to \mathcal{L}_{P}^{\epsilon}$ is a path 
with $\beta (1) = P$, then 
the analytic continuation of $\theta_{\gamma, P}$ along the inverse path $\overline{\beta}$ is equal to 
$\theta_{\beta \centerdot \gamma, \beta(0)}$ and in particular 
\[ \theta_{i} (\beta (0)) = \theta_{\beta \centerdot \gamma, \beta(0)} (\beta(0)) = \gamma (1) = 
 \theta_{\gamma, P} (P)  = \theta_{i} (P). \]
Now all the desired properties are satisfied for $\theta_{1}, \ldots, \theta_{k}$ and we can define $H_{1} = \theta_{1}$.
\end{proof}

\subsection{End of the proof of Theorem \ref{teo:infinite}} \label{subsection:end}
Consider the first integral $H_{1}:  {\operatorname{B}}_{\epsilon}  \rightarrow T_{1}^{\epsilon_{0}}$ given in Proposition 
\ref{pro:submersiveintegrals}. Denote $H=H_1$  and $T=T_{1}^{\epsilon_{0}}$.
 By considering coordinates $(\mathrm{x}_{1}, \ldots,  \mathrm{x}_{q})$ in $T$, 
 we obtain that $H$ is of the form $(f_{1},\ldots, f_{q})$ where $f_{1}, \ldots, f_{q}$ are holomorphic functions such that 
 the $q$-form $\omega := df_{1} \wedge \ldots \wedge d f_{q}$ satisfies 
 $\omega (P) \neq 0$
 for any $P \in {\operatorname{B}}_{\epsilon} \setminus \Sigma$ 
 since $H$ is a submersion on ${\operatorname{B}}_{\epsilon} \setminus \Sigma$ by construction 
 (Proposition \ref{pro:submersiveintegrals}).
 
 The form $\omega$ is of the form $g \omega'$ where $g \in {\mathcal O}_{n}$ 
 and the coefficients of the germ of $1$-form $\omega'$ have no common factors. 
 As a consequence of $\mathrm{Sing} ({\mathcal F}) = \{ {\bf 0} \}$, we deduce that either
 ${\bf 0}$ is an isolated singularity of $\omega$ or the germ of $\mathrm{Sing} (\omega)$ at the origin
 has dimension $n-1$.
 Since $\mathrm{Sing} (\omega) \subset \Sigma$, it follows that $\mathrm{Sing} (\omega) = \{ {\bf 0} \}$. 
 It is known that if $\omega (\bf{0})$ vanishes then 
 \[ \{ P \in   {\operatorname{B}}(\mathbf{0}; \epsilon); \; (df_{1} \wedge \ldots \wedge d f_{q})(P)=0 \}   \] 
 does not contain isolated points
 since it has dimension greater than $q-2$ and thus positive 
 \cite[Lemma 3.1.2]{Medeiros:singular}  \cite{Mal:Frob2}, contradicting $\mathrm{Sing} (\omega) = \{ {\bf 0} \}$.
  
 \subsection{The completely integrable case} \label{subsection:infinite_ci}
 First let us show Theorem \ref{teo:dicritical}. and then discuss a corollary. 
 \begin{proof}[Proof of Theorem \ref{teo:dicritical}]
 This is immediate since given $\mathcal{F}$ satisfying the
hypotheses of Theorem \ref{teo:dicritical}, it also satisfies the hypotheses of Theorem \ref{teo:infinite}
by Propositions \ref{prop:closedleaves} and \ref{pro:finite}.
 \end{proof}
 \begin{corollary}
 Let ${\mathcal F}$ be a germ of $\mathrm{(CI)}_{\mathcal{O}}$ foliation of codimension $q=2$ in 
 $\cn{n}$ with isolated singularity at ${\bf 0}$. Then there exists a dicritical hypersurface $S$. 
 More precisely, the restriction ${\mathcal F}_{|S}$ has a purely meromorphic first integral.   
 \end{corollary}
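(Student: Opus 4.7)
The plan is to combine Theorem~\ref{teo:dicritical} with the algebraic mechanism underlying Proposition~\ref{pro_basica}. Fix a holomorphic complete first integral $(f_1,f_2)$ of $\mathcal{F}$, normalized so that $f_1(\mathbf{0})=f_2(\mathbf{0})=0$.

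First I would apply Theorem~\ref{teo:dicritical} to produce infinitely many separatrices $\{\Gamma_\alpha\}_{\alpha\in A}$ at the origin. Since $f_1,f_2$ are holomorphic and constant on each leaf, every closure $\overline{\Gamma}_\alpha$ is an irreducible $(n-2)$-dimensional analytic variety contained in $\{f_1=f_2=0\}$. An $(n-2)$-dimensional irreducible component of $\{f_1=f_2=0\}$ can absorb at most one such $\overline{\Gamma}_\alpha$, so the analytic set $\{f_1=f_2=0\}$ must contain an irreducible component $S$ of dimension $n-1$: this will be my candidate dicritical hypersurface. To check that $S$ is $\mathcal{F}$-invariant, I would pick a generic smooth point $P\in S$ that lies outside $\mathrm{Sing}(\mathcal{F})$ and outside the other irreducible components of $\{f_1=f_2=0\}$; choosing a local submersion $\phi=(y_1,y_2)\colon U\to\mathbb{C}^2$ whose fibers are the leaves of $\mathcal{F}$ and writing $f_i=g_i\circ\phi$ exhibits $S\cap U$ as $\phi^{-1}(\{g_1=g_2=0\})$, i.e.\ as a union of $\mathcal{F}$-leaves; invariance then propagates to all of $S$ by analyticity.

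To build the first integral of $\mathcal{F}_{|S}$, I would invoke Proposition~\ref{pro_basica} with the ambient ball in the role of the normal variety, with $f=f_1,g=f_2$, and with $H:=S$. This supplies an element $h$ of the field $\mathcal{K}$ generated by $f_1,f_2$ and the constants whose restriction to $S$ is non-constant; in particular $\nu_S(h)=0$, so $h|_S$ is a genuine non-constant meromorphic first integral of $\mathcal{F}_{|S}$. If $h|_S$ already has a pole somewhere, we are done. Otherwise $h|_S$ is holomorphic at the origin, and I would replace $h$ by $\Psi:=1/(h-h(\mathbf{0}))$: another meromorphic first integral of $\mathcal{F}$ whose restriction $1/(h|_S-h(\mathbf{0}))$ has a polar divisor $\{h|_S=h(\mathbf{0})\}\cap S$ of codimension one in $S$, and hence is purely meromorphic.

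The hard part is the invariance of $S$: local normal forms for $\mathcal{F}$ only exist away from $\mathrm{Sing}(\mathcal{F})$, and $S$ itself may be singular or meet other components of $\{f_1=f_2=0\}$ in positive codimension, so the argument must be executed at generic points of $S$ and then propagated analytically. The production and modification of $h$ is then essentially a direct consequence of Proposition~\ref{pro_basica} together with the classical trick of inverting a shifted holomorphic function to manufacture a pole along a level hypersurface of $S$.
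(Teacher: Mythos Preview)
Your overall strategy matches the paper's: produce a codimension-one component $S$ of $\{f_1=f_2=0\}$ via Theorem~\ref{teo:dicritical}, then use Proposition~\ref{pro_basica} to get a non-constant $h\in\mathcal{K}$ on $S$. The invariance argument you add is fine (the paper leaves it implicit), but the last step, where you manufacture a ``purely meromorphic'' restriction, is where the proposal breaks.

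In this paper, ``purely meromorphic at $\mathbf{0}$'' means that \emph{neither} the germ nor its inverse is (weakly) holomorphic there; equivalently, $\mathbf{0}$ lies in the indeterminacy locus. That is precisely what makes $S$ dicritical: every level of $h|_S$ then passes through $\mathbf{0}$. Your replacement $\Psi=1/(h-h(\mathbf{0}))$ does not achieve this: $1/\Psi|_S=h|_S-h(\mathbf{0})$ is holomorphic at $\mathbf{0}$, so $\Psi|_S$ has a genuine pole, not an indeterminacy. Only the level $\{\Psi|_S=\infty\}$ meets $\mathbf{0}$, so you have not shown $S$ is dicritical. Likewise, your first case ``$h|_S$ already has a pole somewhere'' is not enough: a pole at $\mathbf{0}$ with no zero component through $\mathbf{0}$ again gives holomorphic inverse.

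The fix is exactly what the paper does and what your own pigeonhole argument almost sets up: choose $S$ to be a codimension-one component that contains \emph{infinitely many} of the separatrices (there are finitely many components, so some component must). Then for any non-constant meromorphic first integral $h$ of $\mathcal{F}$, if $h|_S$ (resp.\ $1/h|_S$) were holomorphic at $\mathbf{0}$, every separatrix in $S$ would lie in the single level $\{h|_S=h|_S(\mathbf{0})\}$ (resp.\ the zero level of $1/h|_S$); but a non-constant meromorphic function on the irreducible $(n-1)$-fold $S$ has level sets of pure dimension $n-2$, hence with finitely many irreducible components, contradicting the infinitude of separatrices in $S$. This forces $h|_S$ to be purely meromorphic without any ad hoc modification of $h$.
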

 \begin{proof}
 Consider a complete first integral $(f_1, f_2) \in {({\mathcal O}_{n} \cap {\mathfrak m}_{n})}^{2}$. 
 Note that all separatrices of ${\mathcal F}$ are contained in $(f_1, f_2)^{-1} ({\bf 0})$.
 As a consequence of the hypothesis, there exists an irreducible component $S$ of   $(f_1, f_2)^{-1} ({\bf 0})$
 of codimension $1$ containing infinitely many separatrices.
 Moreover, there exists a meromorphic first integral $h$ of ${\mathcal F}$ that is non-constant on $S$ by 
 Proposition \ref{pro_basica}. Finally $h_{|S}$ is purely meromorphic since $S$ contains infinitely
 many separatrices.
 \end{proof}
 \begin{remark}
 ${\mathcal F}_{|S}$ is a holomorphic foliation of codimension $1$ with a purely meromorphic first integral. 
 The level sets of $h_{|S}$ consists of finitely many leaves and their closures. 
 Hence, the set of separatrices of ${\mathcal F}_{|S}$ has the same cardinal as ${\mathbb C}$.
 \end{remark}

 \section{Existence of a dicritical hypersurface}
 \label{sec:cod1}
 In this section we are going to show Theorem \ref{teo:exist_cod1}.
 The proof follows many of the same steps of the proof of Theorem \ref{teo:infinite}. 
 We are going to adapt the ideas to the current setting.

 We denote 
 $S = \{ P \in  \overline{\operatorname{B}}_{\epsilon_{0}} \setminus \{ {\bf 0} \} : {\bf 0} \in \overline{\mathfrak L}_{P}^{\epsilon_{0}} \}$
 and $\Sigma = \{ {\bf 0} \} \cup S$.
  Thus $\Sigma$ is the set of separatrices. It is compact by Proposition \ref{pro:stability}. 
  Since transversality is an open property,
  the hypothesis of the theorem holds for $0< \epsilon' - \epsilon_{0} <<1$.
  Let $\Gamma$ be the isolated separatrix of ${\mathcal F}$. 
  
\subsection{The neighborhood of the set of separatrices}
Instead of a system of transverse sections, we are going to consider a transverse section 
$T:= T^{\epsilon_{0}}$ transverse to 
$\Sigma$ at a point $O^{\epsilon_{0}} \in \Gamma \setminus \{ {\bf 0} \}$.
We assume that $\overline{T} \setminus \{ {\bf 0} \} \subset {\operatorname{B}}_{\epsilon_{0}} \setminus \Sigma$, 
$T \cap \Sigma = \{ O^{\epsilon_{0}} \}$, 
that $T$ is transverse to  ${\mathcal F}$ and that $\overline{T}$ is homeomorphic to a closed ball whereas
$T$ is homeomorphic to an open ball.

We can prove the existence of a fundamental system of invariant open and connected neighborhoods of $\Sigma$ in 
$\overline{\operatorname{B}}_{\epsilon_{0}}$ with analogous properties as in Proposition \ref{pro:transv_sections}. 
We just remove property (4) in Proposition \ref{pro:transv_sections} and in property (3) we just consider $T$.
The proof is analogous to the already explained. Consider $m>>1$; the set  
\[ A=\{P\in U_{m}^{\epsilon_{0}} \setminus \Sigma;\; {\mathfrak L}_{P}^{\epsilon_{0}} \cap T \ne \emptyset\} \]
is open in $\overline{\operatorname{B}}_{\epsilon_{0}}$, 
its boundary in $\overline{\operatorname{B}}_{\epsilon_{0}}$ is contained in $\Sigma$ and 
$A$ is a union of connected components of $U_{m}^{\epsilon_{0}} \setminus \Sigma$  
by the proof of Proposition  \ref{pro:saturated}. 
Since one of the connected components of $A$ 
contains a pointed neighborhood of $O$ in $T$ and all of them contains points arbitrarily near $O$, 
it follows that $A$ is connected. 
 Moreover, $A \cap T$ is connected by arguments analogous to the ones in the proof of property (4)
 of Proposition \ref{pro:transv_sections}.

\subsection{Construction of holomorphic first integrals} 
\label{subsec:cons}
Our goal is finding a holomorphic complete first integral 
$f: U_{m}^{\epsilon_{0}} \cap {\operatorname{B}}_{\epsilon_{0}} \to T$.

Let $U$ be a open ${\mathfrak F}^{\epsilon_0}$-invariant neighborhood of ${\bf 0}$ in 
$\overline{\operatorname{B}}_{\epsilon_{0}}$ such that  
${\mathcal F}$ is transverse to $\partial  {\operatorname{B}}_{\epsilon_{0}}$
at $U \cap \partial  {\operatorname{B}}_{\epsilon_{0}}$.
For instance, we can consider $U = U_{m}^{\epsilon_{0}}$.
Denote   $O = O^{\epsilon_{0}}$.
 Analogously as in the proof of Proposition \ref{pro:submersiveintegrals},
 we obtain multivalued maps $\theta_1,\theta_2,\ldots,\theta_k: A \rightarrow T$
 such that $\theta_{j} (P)$ tends uniformly to $O$ for $1 \leq j \leq k$ and $d(P, \Sigma) \to 0$.
 We consider their restrictions $\Theta_1, \ldots, \Theta_k$ to $U \cap T$ and 
 \[ {\mathcal H}_{{\mathcal F}, \Gamma, U, T} :=\{ \Theta_1, \ldots, \Theta_k \}. \]
 Since $U \cap T$ is a pointed neighborhood of $O$ in $T$
 and $\dim (T) = q \geq 2$, it follows that, up to consider a smaller $U$ if necessary, for instance considering a bigger $m$, 
 $\Theta_j$ is univalued in $U \cap T$ and extends continuously to $O$
 by defining $\Theta_{j} (O) = O$ for any $1 \leq j \leq k$.
 Therefore $\Theta_j$ is holomorphic in $U \cap T$ for any $1 \leq j \leq k$.
 It satisfies  $\Theta_j (U \cap T) = U \cap T$ by the ${\mathfrak F}^{\epsilon_0}$ invariance of $U$ for $1 \leq j \leq k$.
 Moreover, by the construction of $\theta_1, \ldots, \theta_k$, we obtain that 
 $\Theta_{j}^{-1}$ and $\Theta_{j} \circ \Theta_{l}$ belong to ${\mathcal H} := {\mathcal H}_{{\mathcal F}, \Gamma, U, T} $
 for any choice of $1 \leq j, l \leq k$. Thus ${\mathcal H}$
 is a finite group of biholomorphisms of 
 $U \cap T$. It induces a finite subgroup of $\mathrm{Diff} (T, O)$.
 \begin{definition}
 \label{def:total} 
 We say that ${\mathcal H}_{{\mathcal F}, \Gamma, U, T} $ is the  total holonomy group   
 associated to ${\mathcal F}$, $U$, $\Gamma$ and $T$. 
 \end{definition}
 By construction of $\theta_1, \ldots, \theta_k$, we have that 
 \[ {\mathfrak L}_{P}^{\epsilon_{0}} \cap T = {\mathcal O}_{\mathcal H}(P) \]
 for any $P \in U  \cap T$, where ${\mathcal O}_{\mathcal H}(P)$ is the ${\mathcal H}$-orbit of $P$.

 Since ${\mathcal H}$ is a finite group, it is linearizable as a subgroup of  $\mathrm{Diff} (T, O)$, i.e. it is a subgroup of $\mathrm{GL} (q, {\mathbb C})$
 in some coordinates $({\rm x}_1, \ldots, {\rm x}_q)$ in $T$ centered at $O$.
 It is easy to construct linear maps $L_1, \ldots, L_q: {\mathbb C}^{q} \to {\mathbb C}$ such that for any 
 choice of  $g_1, \ldots, g_q \in {\mathcal H}$, we have
 \[ \{ L_{1} \circ g_{1} = 0\} \cap \{ L_{2} \circ g_{2} = 0\} \cap
 \ldots \cap  \{ L_{q} \circ g_{q} = 0\} = \{ {\bf 0} \} . \] 
 Then, $F_{j}:= \prod_{\Theta \in {\mathcal H}} (L_{j} \circ \Theta)$ is a ${\mathcal H}$-invariant homogeneous polynomial 
 of degree $|{\mathcal H}|$ for any $1 \leq j \leq q$ and 
 $(F_1, \hdots, F_q)^{-1} ({\bf 0}) = \{ {\bf 0} \}$.
 Moreover, $d F_{1} \wedge \ldots \wedge d F_q \not \equiv 0$ holds 
 because otherwise the generic level set of $(F_1, \ldots, F_q)$ would have positive dimension and as consequence
 every level set would have positive dimension, contradicting that the ${\bf 0}$-level is a point.

 Given $1 \leq j \leq q$, we can obtain an extension of $F_{j}$ to $A$, that we denote by $f_{j}$, 
 in such a way that is constant on leaves 
 ${\mathfrak L}_{P}^{\epsilon_{0}}$ since $F_j$ is ${\mathcal H}$-invariant. Moreover, we can extend $f_j$ to $U$
 by defining $f_{j}(P) =0$ for any $P \in U \setminus A$.
 Since the boundary of $A$ in $U$ is contained in $\Sigma$, it follows that 
 $f_j$ is continuous in $U$. Moreover, it is holomorphic
 in $f_{j}^{-1} ({\mathbb C}^{*}) \cap {\operatorname{B}}_{\epsilon_{0}}$ since such a set is contained in $A$.
 Therefore $f_j$ is holomorphic in $U \cap {\operatorname{B}}_{\epsilon_{0}}$ 
 by Rado's theorem (cf. \cite[Corollary Q.6]{Gunning1}).
 By construction $f:=(f_1, \ldots, f_q)$ is a holomorphic complete first integral of ${\mathcal F}$ and thus 
 ${\mathcal F}$ is $(\mathrm{CI})_{\mathcal{O}}$.
 \begin{remark}
 \label{rem:analytic}
 $\Sigma \cap {\operatorname{B}}_{\epsilon_{0}}$ is contained in the analytic set $V  \cap {\operatorname{B}}_{\epsilon_{0}}$
 where $V := (f_1, \ldots, f_q)^{-1}({\bf 0})$ and  
 $U \setminus V = A$. As a consequence 
 $U \setminus \Sigma$ is connected, $A = U \setminus \Sigma$ and $\Sigma = V$.
 \end{remark}
 \begin{remark}
 \label{rem:tci_non_iso}
 It is possible to define total holonomy groups in more general contexts. 
 For instance, consider 
$f_{j} (x_1, x_2, x_3, x_4) = \sum_{k=1}^{4} x_{k}^{j+1}$ for $j \in \{1,2\}$. We have
\[ df_1 \wedge df_2 = 6 \sum_{j < k} x_{j} x_{k} (x_{k} - x_{j}) dx_{j} \wedge dx_{k} . \] 
Since $\mathrm{cod} (\mathrm{Sing}(df_1 \wedge df_2 )) =3$, the foliation 
${\mathcal F}$ of codimension $2$ with first integrals $f_1$ and $f_2$ satisfies 
$\mathrm{Sing} ({\mathcal F}) = \mathrm{Sing}(df_1 \wedge df_2 )$.
By definition ${\mathcal F}$ is $\mathrm{(CI)}_{\mathcal O}$.
The singular set of ${\mathcal F}$ is the union of $15$ lines through the origin.
  We have $\mathrm{Sing}({\mathcal F}) \cap (f_1, f_2)^{-1}({\bf 0})  = \{ {\bf 0} \}$ and, moreover,  
 $(f_1, f_2)^{-1}({\bf 0})$ consists of at most $6$ separatrices.
 The importance of the 
 transversality condition in the definition of TCI is that it forces leaves containing points close to ${\bf 0}$
 to intersect the boundary $\partial {\operatorname{B}}_{\epsilon}$ of the domain of definition transversally.  
  This happens for ${\mathcal F}$ since 
 such leaves are going to intersect the boundary near $(f_1, f_2)^{-1}({\bf 0})$ that has a 
 local conic structure. 
 We can show analogues in this setting of 
 Propositions \ref{pro:transv_sections}, \ref{pro:saturated} and \ref{pro:submersiveintegrals}
 to define the total holonomy group of the foliation ${\mathcal F}$ in a transverse section to a separatrix.
 This expansion of the idea of TCI foliations to the case of non-isolated singularities will be studied in future work.
 Let us advance that there are examples of non-cyclic total holonomy groups. 
 \end{remark}
\subsection{End of the proof of Theorem \ref{teo:exist_cod1}}
It suffices to show that 
$\Sigma$ has an irreducible component of dimension $n-1$. 
Suppose, aiming at contradiction, that $\dim (\Sigma) \leq n-2$.
There exists a ball ${\operatorname{B}}_{\epsilon}$ contained in $U$ and we obtain that 
$\theta_{i}: {\operatorname{B}}_{\epsilon} \to T$ is a (univalued) holomorphic map for any $1 \leq i \leq k$
as in the proof of Proposition \ref{pro:submersiveintegrals}.
By considering coordinates $(\mathrm{x}_{1}, \ldots,  \mathrm{x}_{q})$ in $T$, we see that  
the map $H:= \theta_1$ is of the form $(f_{1}', \ldots, f_{q}')$. Moreover, it is a submersion on 
${\operatorname{B}}_{\epsilon} \setminus \Sigma$ by construction.
Now, we proceed as in subsection \ref{subsection:end}.
The $1$-form $\omega := d f_{1}' \wedge \ldots \wedge d f_{q}'$
satisfies that either $\mathrm{Sing} (\omega) = \{ {\bf 0} \}$ or $\dim (\mathrm{Sing} (\omega)) =n-1$.
The latter option can be ruled out since $\mathrm{Sing} (\omega) \subset \Sigma$. 
The former option is also impossible since $\dim (\mathrm{Sing} (\omega)) \geq q-1 \geq 1$  \cite[Lemma 3.1.2]{Medeiros:singular}.
The proof is complete.

 \section{TCI germs of vector fields}
 \label{sec:tci}
 In this section we are going to improve Theorem \ref{teo:exist_cod1} 
 for the case of vector fields. 
 Indeed, next we show  Theorem \ref{teo:characterization} 
 that provides a characterization of 
 topologically complete integrable vector fields with an isolated separatrix. 

 \begin{definition}
 We denote by ${\mathcal F}_{X}$ the foliation induced by a germ of holomorphic vector field $X$
 such that $\mathrm{cod} (\mathrm{Sing}(X)) \geq 2$.
 \end{definition}
 
First, we show the sufficient condition in Theorem \ref{teo:characterization}. 
 \begin{lemma}
 \label{lem:suff}
 Consider a germ of holomorphic foliation ${\mathcal F}$ in $\cn{n}$ 
 that is analytically conjugated to  ${\mathcal F}_{X}$ where
 \[ X= \sum_{j=1}^{n} \lambda_{j} x_j \frac{\partial}{\partial x_j}, \ \  
\lambda_1, \ldots, \lambda_{n-1} \in {\mathbb Z}^{-}, \ \ \lambda_{n} \in {\mathbb Z}^{+}.  \]
Then  ${\mathcal F}$ is topologically completely  integrable and completely integrable and has an isolated separatrix.
 \end{lemma}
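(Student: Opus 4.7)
The plan is, by the analytic conjugacy hypothesis, to assume outright that $\mathcal{F} = \mathcal{F}_X$ with $X$ the diagonal integer-eigenvalue model, and then to verify complete integrability and the four TCI conditions by explicit computation. Since $X$ is linear, $\mathcal{F}_X$ is invariant under the homothety $x \mapsto t x$, so checking the bullets of Definition \ref{def:tci} in a single fixed ball $\overline{\operatorname{B}}_{\epsilon_{0}}$ is enough. Writing $\mu_{j} := -\lambda_{j} \in \mathbb{Z}^{+}$ for $j < n$, I would take the monomials $f_{j} := x_{j}^{\lambda_{n}} x_{n}^{\mu_{j}} \in \mathcal{O}_{n}$, $1 \leq j \leq n-1$, as first integrals: a direct computation gives $X(f_{j}) = (\lambda_{n}\lambda_{j} + \mu_{j}\lambda_{n}) f_{j} = 0$, and the coefficient of $dx_{1} \wedge \cdots \wedge dx_{n-1}$ in $df_{1} \wedge \cdots \wedge df_{n-1}$ at $(1, \ldots, 1)$ is $\lambda_{n}^{n-1} \neq 0$, yielding functional independence. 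This already establishes $(\mathrm{CI})_{\mathcal{O}}$, hence CI.

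Because the $\lambda_{j}$ are integers, each orbit through a nonzero $a \in \mathbb{C}^{n}$ admits the single-valued global parametrization $\psi_{a}(s) = (a_{1} s^{\lambda_{1}}, \ldots, a_{n} s^{\lambda_{n}})$, $s \in \mathbb{C}^{*}$, and along the orbit $r^{2}(s) = \sum_{j} |a_{j}|^{2} |s|^{2\lambda_{j}}$ depends only on $|s|$ and is strictly convex in $\log |s|$ whenever $X$ has both a contracting and an expanding direction along the orbit. From this description I would extract three facts: (i) every orbit is a closed analytic subset of $\mathbb{C}^{n} \setminus \{\mathbf{0}\}$; (ii) $\mathbf{0} \in \overline{\mathcal{L}_{a}}$ precisely when either $a_{n} = 0$ or $a_{j} = 0$ for all $j < n$, so the separatrices are the leaves inside the invariant hyperplane $\{x_{n} = 0\}$ together with the punctured $x_{n}$-axis $\Gamma$; and (iii) since $\Gamma \cap \{x_{n} = 0\} = \{\mathbf{0}\}$, the axis $\Gamma$ is an isolated separatrix.

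Now fix $\epsilon_{0} > 0$ and check the four bullets of Definition \ref{def:tci}. The singularity at $\mathbf{0}$ is isolated since all $\lambda_{j}$ are nonzero, and closedness of the leaves of $\mathcal{F}^{\epsilon_{0}}$ in $\operatorname{B}_{\epsilon_{0}} \setminus \{\mathbf{0}\}$ follows from (i). For a closed leaf $\mathcal{L}_{a}$, meaning $a_{n} \neq 0$ and $a_{j_{0}} \neq 0$ for some $j_{0} < n$, the strict convexity of $r^{2}$ in $\log|s|$ makes $\mathcal{L}_{a} \cap \overline{\operatorname{B}}_{\epsilon_{0}}$ correspond to an annulus $|s| \in [s_{-}, s_{+}]$; the map $\psi_{a}$ is $d$-to-$1$ with $d := \gcd \{|\lambda_{j}| : a_{j} \neq 0\}$, so a generator of $\pi_{1}(\mathcal{L}_{a}) \cong \mathbb{Z}$ lifts to the rotation $s \mapsto e^{2\pi i / d} s$, which is the time-$T_{0}$ flow of $X$ for $T_{0} = 2\pi i / d$. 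Choosing the transversal $T = \{x_{n} = a_{n}\}$ (invariant under $\exp(T_{0} X)$ since $d \mid \lambda_{n}$), the holonomy is just the restriction of $\exp(T_{0} X)$: a diagonal map with entries $e^{2\pi i \lambda_{j}/d}$, trivial on the coordinates $j$ with $a_{j} \neq 0$ and a root of unity of finite order on the rest. The holonomy group is therefore finite cyclic. For the transversality bullet, since $\lambda_{j} \in \mathbb{R}$, $X(r^{2}) = \sum_{j} \lambda_{j} |x_{j}|^{2}$ is real; it equals $\sum_{j<n} \lambda_{j} |x_{j}|^{2} < 0$ on $\{x_{n} = 0\} \setminus \{\mathbf{0}\}$ and $\lambda_{n} |x_{n}|^{2} > 0$ on $\Gamma$. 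Hence $\mathrm{Re}(X)(r^{2}) \neq 0$ on every separatrix, so $X$ is not tangent to $\partial \operatorname{B}_{\epsilon_{0}}$.

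The one delicate step is the finite-holonomy computation, where the covering multiplicity $d$ of $\psi_{a}$ must be tracked against the eigenvalues; the integrality of the $\lambda_{j}$ is precisely what forces the resulting holonomy element to be of finite order rather than an infinite-order rotation. Everything else is a routine check made possible by the explicit monomial first integrals, the diagonal flow, and the reality of the eigenvalues.
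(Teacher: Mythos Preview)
Your argument is correct. The verification of $(\mathrm{CI})_{\mathcal{O}}$ via the monomials $x_{j}^{\lambda_{n}} x_{n}^{-\lambda_{j}}$, the identification of the separatrix set as $\{x_{n}=0\}\cup\Gamma$ with $\Gamma$ isolated, and the transversality check via $X(r^{2})=\sum_{j}\lambda_{j}|x_{j}|^{2}$ all match the paper exactly.

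The genuine difference is in how the closedness-of-leaves and finite-holonomy bullets are obtained. The paper does not compute these directly: having already established $(\mathrm{CI})_{\mathcal{O}}$, it simply invokes Proposition~\ref{prop:closedleaves} (every leaf of a CI germ is closed off the singular set) and Proposition~\ref{pro:finite} (every leaf of a CI germ has finite holonomy group), both proved earlier for arbitrary CI foliations. You instead exploit the explicit global parametrization $\psi_{a}(s)=(a_{j}s^{\lambda_{j}})$, the convexity of $r^{2}\circ\psi_{a}$ in $\log|s|$, and a direct identification of the holonomy generator with the diagonal map $\exp\!\bigl(\tfrac{2\pi i}{d}X\bigr)$, whose entries are roots of unity precisely because the $\lambda_{j}$ are integers. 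Your route is more hands-on and self-contained, and it makes visible exactly where integrality of the eigenvalues enters; the paper's route is shorter here because the heavy lifting was already done in Section~\ref{sec:ci}.
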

 \begin{proof}
 It suffices to show the properties for ${\mathcal F}_{X}$.
 The foliation ${\mathcal F}_{X}$ is $(\mathrm{CI})_{\mathcal{O}}$ since the map  
 \[ f:= (f_1, \ldots, f_{n-1}) := (x_{1}^{\lambda_{n}} x_{n}^{-\lambda_{1}}, \ldots, x_{n-1}^{\lambda_{n}} x_{n}^{-\lambda_{n-1}}) \]
 is a holomorphic complete first integral of  $\mathcal{F}_{X}$. The set of separatrices coincide with 
 $f^{-1}({\bf 0})$, i.e. it is the union of $x_{n}=0$ and the $x_n$-axis. Indeed, the separatrices, different than the $x_n$-axis, 
are the curves admitting parametrizations of the form 
  \[ t \mapsto (c_1 t^{-\lambda_1}, \ldots, c_{n-1} t^{-\lambda_{n-1}}, 0) \]
for some $(c_{1}, \hdots c_{n}) \in ({\mathbb C}^{n})^{*}$.  Since
 \[ {X \left( \sum_{j=1}^{n-1} |x_{j}|^{2} \right)} =  \sum_{j=1}^{n-1} \lambda_{j} |x_{j}|^{2} \  \ \mathrm{and} \ \ 
  {X \left( |x_{n}|^{2} \right)} =  \lambda_{n} |x_{n}|^{2}, \]
  it follows that $X$ is transverse to $\partial {\operatorname{B}}_{\epsilon}$ at the points in  
  $x_{n}=0$ and the $x_{n}$-axis
 for $\epsilon >0$.  
  We deduce that ${\mathcal F}_{X}$ is topologically completely integrable by 
 Propositions \ref{prop:closedleaves} and  \ref{pro:finite}. 
 \end{proof}
 
  Assume, in the remainder of the section, 
that ${\mathcal F}$ is a germ of holomorphic one-dimensional foliation in $\cn{n}$ ($n \geq 3$)
that is topologically completely integrable and has an isolated separatrix $\Gamma$ and 
fix the coordinate system in the definition of TCI (cf. Definition \ref{def:tci}).
Let $X$ be a germ of vector field with isolated singularity
defining the foliation ${\mathcal F}$. Consider the notations in section \ref{sec:cod1}.  

 \begin{proposition}
 \label{pro:non-nil}
 The linear part $D_{\bf 0} X$ of $X$ at the origin is non-nilpotent.
 \end{proposition}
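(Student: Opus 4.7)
I argue by contradiction: assume $D_{\mathbf 0} X$ is nilpotent. By Theorem \ref{teo:exist_cod1}, $\mathcal F$ is $(\mathrm{CI})_{\mathcal O}$ and admits a dicritical hypersurface $S \subset \Sigma$ of codimension one. I would choose a transverse section $T$ to $\Gamma$ at a point $O = \gamma(t_0) \in \Gamma \setminus \{\mathbf 0\}$ with $|t_0|$ so small that $T \cap S$ is an analytic set of dimension at least $n-2 \geq 1$; this is possible by general position, since $S$ is a codimension-one hypersurface through $\mathbf 0$ and $O$ may be brought arbitrarily close to $\mathbf 0$ along $\Gamma$. Then I consider the total holonomy group $\mathcal H = \mathcal{H}_{\mathcal F, \Gamma, U, T}$ of Section \ref{subsec:cons}, a finite subgroup of $\mathrm{Diff}(T, O)$.

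The central step is to show that $\mathcal H$ is trivial. Each $\Theta \in \mathcal H$ arises as the analytic continuation of the holonomy transport $P \mapsto \mathfrak L_P \cap T$ along a path in $U \setminus \Sigma$, and its derivative $D_O \Theta$ coincides with the monodromy of the variational equation $\dot \xi = DX(\phi(s))\xi$ along the underlying path $\phi$. Homotoping $\phi$ within its class in $U \setminus \Sigma$ to a loop in $\Gamma \setminus \{\mathbf 0\}$ winding close to $\mathbf 0$, the coefficient $DX$ is controlled by the nilpotent $D_{\mathbf 0} X$, which forces the monodromy to be unipotent. Since $\mathcal H$ is finite, $D_O \Theta$ has finite order, and a unipotent matrix of finite order equals the identity. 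Thus every $\Theta$ is tangent to the identity and of finite order in $\mathrm{Diff}(T, O)$, and so $\Theta = \mathrm{id}$, whence $\mathcal H = \{\mathrm{id}\}$.

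With $\mathcal H$ trivial, the construction of Section \ref{subsec:cons} gives $F_j$ as $\mathcal H$-invariant polynomials of degree $|\mathcal H| = 1$, i.e.\ linear functionals on $T$; the requirement $(F_1, \ldots, F_{n-1})^{-1}(O) = \{O\}$ forces their linear independence, making $f|_T \colon T \to \mathbb C^{n-1}$ a linear isomorphism. On the other hand, $S \subset \Sigma$ and $f(\Sigma) = \{\mathbf 0\}$ yield $f(T \cap S) = \{\mathbf 0\}$, so by injectivity of $f|_T$ one has $T \cap S \subset (f|_T)^{-1}(\mathbf 0) = \{O\}$. This contradicts the positive-dimensional intersection secured at the outset, proving $D_{\mathbf 0} X$ cannot be nilpotent.

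The main obstacle is rigorously establishing the unipotence of $D_O \Theta$. Because $D_{\mathbf 0} X$ is nilpotent, $X|_\Gamma$ vanishes at $\mathbf 0$ to order at least two, so the transverse variational equation acquires an irregular singular point at $\mathbf 0$ and the naive formula $\exp(2\pi i A / \lambda)$ for the holonomy's linearization is not directly available. The technical work will be to combine the formal structure of the monodromy around this irregular singularity with the \emph{a priori} finiteness of $\mathcal H$ to rule out Stokes obstructions and conclude that $D_O \Theta$ has all eigenvalues equal to $1$ and is diagonalizable, hence is the identity.
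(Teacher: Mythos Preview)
Your approach has a fatal gap in the final step. The transversal $T$ used in Section~\ref{subsec:cons} is required to satisfy $T\cap\Sigma=\{O\}$; since $O\in\Gamma$ and $\Gamma$ is the isolated separatrix, this forces $T\cap S=\emptyset$. You therefore cannot use the same $T$ both for the total holonomy construction and for the positive-dimensional intersection with $S$. More decisively, the implication ``$\mathcal H$ trivial $\Rightarrow$ contradiction with the existence of a dicritical $S$'' is simply false: for the linear model $X_0=\sum_j\lambda_j x_j\,\partial_{x_j}$ with $\lambda_n=1$ and $\lambda_1,\dots,\lambda_{n-1}\in\mathbb Z^{-}$, the total holonomy group is trivial while the dicritical hypersurface $\{x_n=0\}$ is present. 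So even if your central step went through, no contradiction would follow; nilpotence would have to be used again in the endgame, and your argument does not do this.

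The central step is also not established. Elements $\Theta\in\mathcal H$ arise from holonomy transport along paths in leaves $\mathfrak L_Q\subset U\setminus\Sigma$, and $\Gamma\subset\Sigma$, so you cannot literally homotope such paths into $\Gamma$ within $U\setminus\Sigma$. You give no argument that every $\Theta$ is realized, in the limit $Q\to O$, by a loop in $\Gamma$; the paper's identification $\mathcal H=\langle\Theta\rangle$ comes only later and relies on the formal normal form, which itself requires non-nilpotence. Even for the subgroup coming from the genuine holonomy of $\Gamma$, you correctly flag the irregular singularity of the transverse variational equation and do not resolve it.

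The paper's proof is completely different and avoids holonomy groups altogether. It picks a \emph{non-isolated} separatrix $\Gamma_\circ$ inside a two-dimensional irreducible germ contained in $\Sigma$. The TCI transversality of $\mathcal F$ with $\partial\operatorname{B}_\epsilon$ along $\Sigma$ yields an $\mathbb S^1$-valued direction field $(a,b)$ on $\partial\Sigma$; Mumford's description of the link of a normal surface singularity forces $(a,b)$ to be null-homotopic on $\Gamma_\circ\cap\partial\operatorname{B}_\epsilon$. Poincar\'e--Hopf then gives index $1$ for the pullback $Z=g(t)\,\partial_t$ of $X$ to a Puiseux disc of $\Gamma_\circ$, so $g$ has a simple zero at $0$. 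A direct computation with the Puiseux parametrization converts this into a nonzero eigenvalue of $D_{\mathbf 0}X$ along the tangent direction of $\Gamma_\circ$.
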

 \begin{proof}
  By definition of TCI there exists a decreasing sequence ${(\epsilon_{m})}_{m \geq 1}$ in ${\mathbb R}^{+}$ such that
 ${\mathcal F}$ is transverse to $\partial {\operatorname{B}}_{\epsilon_{m}}$ at the points of 
 \[ \Sigma_{m} = \{ P \in  \overline{\operatorname{B}}_{\epsilon_{m}} \setminus \{ {\bf 0} \} : 
 {\bf 0} \in \overline{\mathfrak L}_{P}^{\epsilon_{m}} \}  \cup \{ {\bf 0} \}\]
 for any $m \in {\mathbb Z}_{\geq 1}$.
 The set $\Sigma_{m} \cap  {\operatorname{B}}_{\epsilon_{m}} $ is analytic in ${\operatorname{B}}_{\epsilon_{m}} $ 
 for $m \geq 1$  by Remark \ref{rem:analytic}.
 Since $\Sigma_{m+1} \subset \Sigma_m$ for any $m \in {\mathbb Z}_{\geq 1}$, the germ
 $(\Sigma_{m}, {\bf 0})$ is independent of $m$ for any $m  >>1$.
 Since $\Sigma_{m} \cap  {\operatorname{B}}_{\epsilon_{m}} $ is analytic in 
 $ {\operatorname{B}}_{\epsilon_{m}} $ for $m  \in {\mathbb Z}_{\geq 1}$, 
 there exists $m_{0} \in {\mathbb Z}_{\geq 1}$ such that 
 $\Sigma_m = \Sigma_{m_0} \cap {\operatorname{B}}_{\epsilon_{m}} $ for any $m  \geq m_{0}$.
 Consider $\epsilon = \epsilon_{m}$ for some $m>>1$; the pair 
 $(\overline{\operatorname{B}}_{\epsilon}, \overline{\operatorname{B}}_{\epsilon} \cap \Sigma_{m_{0}})$
 is homeomorphic to the cone over  
 $(\partial {\operatorname{B}}_{\epsilon}, \partial {\operatorname{B}}_{\epsilon} \cap \Sigma_{m_{0}})$
 by the local conic structure of analytic sets \cite[Lemma 3.2]{Burghelea-Verona:local_analytic}.
 
 By Theorem \ref{teo:exist_cod1} the analytic set  $\Sigma_{m_{0}} \cap {\operatorname{B}}_{\epsilon_{m_{0}}}$ 
 has dimension $n-1 \geq 2$.
 Choose germs ${\Gamma}_{\circ}$ of
 non-isolated separatrix of ${\mathcal F}$ and $S$ of irreducible analytic 
 set of dimension $2$ such that 
 ${\Gamma}_{\circ} \subset S \subset (\Sigma_{m_{0}}, {\bf 0})$. 
 Let $\epsilon = \epsilon_{m}$, for some $m>>1$, such that 
  $(\overline{\operatorname{B}}_{\epsilon}, \overline{\operatorname{B}}_{\epsilon} \cap S)$ and 
  $(\overline{\operatorname{B}}_{\epsilon}, \overline{\operatorname{B}}_{\epsilon} \cap {\Gamma}_{\circ})$
 are homeomorphic to the cones over 
  $(\partial {\operatorname{B}}_{\epsilon}, \partial {\operatorname{B}}_{\epsilon} \cap S)$ and
  $(\partial {\operatorname{B}}_{\epsilon}, \partial {\operatorname{B}}_{\epsilon} \cap{\Gamma}_{\circ})$ respectively.
 
 Denote $\Sigma = \Sigma_m$
 and $\partial \Sigma = \partial {\operatorname{B}}_{\epsilon} \cap \Sigma$. Consider now the function  
 $$
 \rho: {\mathbb C}^{n} \to {\mathbb R}_{\geq 0};\quad 
  (x_1, \ldots, x_n)  \mapsto  \sum_{j=1}^{n} |x_{j}|^{2}
 $$
 and the tangent vector $Y(P)$ that is the gradient at $P$ of the restriction of 
 $\rho$ to ${\mathcal L}_{P}^{\epsilon_{m_0}}$. Since $X$ is transverse to $\partial {\operatorname{B}}_{\epsilon}$
 at $\partial \Sigma$, it follows that 
 $Y(P)$ points towards the exterior of ${\operatorname{B}}_{\epsilon}$ for any 
$P \in \partial \Sigma$. In particular, we can define 
maps $(a,b): \partial \Sigma \to{\mathbb S}^{1}$ 
and $c: \partial \Sigma \to {\mathbb R}^{+}$  such that 
\[ Y(P) = c(P) (a(P) \mathrm{Re} (X) (P)  + b(P) \mathrm{Im} (X) (P)) \]
for  $P \in \partial \Sigma$ (cf. Definition \ref{def:flow}).
We can extend $(a,b)$
to $\Sigma \setminus \{ {\bf 0} \}$ continuously by the local conic structure of analytic sets. 
In particular $(a,b)$ is defined in $(S \cap {\operatorname{B}}_{\epsilon}) \setminus \{ {\bf 0} \}$.
Consider the normalization $\pi: \hat{S} \to S \cap {\operatorname{B}}_{\epsilon}$; 
since $S$ is irreducible at ${\bf 0}$, it follows that $\pi^{-1} ({\bf 0})$ is a singleton. Moreover, 
we can lift $(a,b)$ to   $\hat{S} \setminus \pi^{-1} ({\bf 0})$. Let $\hat{\Gamma}_{\circ}$ be an
 irreducible curve 
such that $\pi (\hat{\Gamma}_{\circ}) = \Gamma_{\circ} \cap {\operatorname{B}}_{\epsilon}$.
The map $\pi$ induces a homeomorphism from 
$\hat{\Gamma}_{\circ} \cap \pi^{-1}(\partial {\operatorname{B}}_{\epsilon'})$ to
${\Gamma}_{\circ} \cap \partial {\operatorname{B}}_{\epsilon'}$ for any $\epsilon' \in (0, \epsilon)$.
The description of the link of a normal surface singularity by Mumford 
implies that $\hat{\Gamma}_{\circ} \cap \pi^{-1}(\partial {\operatorname{B}}_{\epsilon'})$ is a torsion element of 
$H_{1} (\hat{S} \setminus \pi^{-1} ({\bf 0}) , \mathbb{Z})$ for any $\epsilon' \in (0, \epsilon)$ \cite[page 234]{Mumford:topology_normal}.
As a consequence $(a,b)_{|\hat{\Gamma}_{\circ} \cap \pi^{-1}(\partial {\operatorname{B}}_{\epsilon'})}$ 
is homotopic to the constant map $(1,0)$ and then 
so is $(a,b)_{|{\Gamma}_{\circ} \cap \partial {\operatorname{B}}_{\epsilon'}}$ for   $\epsilon' \in (0, \epsilon)$.
By continuity $(a,b)_{|{\Gamma}_{\circ} \cap \partial {\operatorname{B}}_{\epsilon}}$ is homotopic to $(1,0)$.
Similar arguments are explored in  \cite{Ito:Poincare-Bendixson}.

Let $\gamma_{\circ} (t)$ be 
an irreducible Puiseux parametrization  
of $\Gamma_{\circ}$. 
Assume that it is defined (and injective) in a neighborhood of $\overline{D}$ where 
$D$ is homeomorphic to an open disc in ${\mathbb C}$ such that  $\overline{D}$ is homeomorphic to a closed disc, 
$\gamma_{\circ} (D) = {\operatorname{B}}_{\epsilon} \cap \Gamma_{\circ}$ and 
$\gamma_{\circ} (\partial D) = \partial {\operatorname{B}}_{\epsilon} \cap \Gamma_{\circ}$.
Denote by $Z = g(t) \frac{\partial}{\partial t}$ the vector field defined in a neighborhood of $\overline{D}$
such that 
\[ (D_{t} \gamma_{\circ}) (Z(t)) \equiv X(\gamma_{\circ} (t)). \]
The previous paragraph implies that $\mathrm{Re} (Z)_{|\partial D}$ is homotopic to $Y_{|\partial D}$ that points towards the
outside of $D$. Therefore, we obtain 
\[ \mathrm{ind}(Z, 0) = \chi (D) = 1 \]
by applying Poincar\'{e}-Hopf theorem (cf. \cite[Chapter 6]{Milnor:topology_differentiable}) to $\mathrm{Re} (Z)$. 
It follows that the vanishing order of $g(t)$ at $0$ is equal to $1$. 
Assume, without lack of generality, that $[1:0: \ldots: 0]$ is the direction in the tangent cone of $\Gamma_{\circ}$ at ${\bf 0}$.
The Puiseux parametrization $\gamma_{\circ} := (\gamma_{1}, \ldots, \gamma_{n})$ satisfies that 
the vanishing order of $\gamma_{1}$ is equal to $k$ for some $k \in {\mathbb Z}_{\geq 1}$
and $\gamma_{j} = o(t^{k+1})$ for any $2 \leq j \leq n$. Denote 
$X = \sum_{j=1}^{n} h_{j} \frac{\partial}{\partial x_{j}}$.  Since
\[ h_{j} (\gamma_{\circ} (t)) = g(t) \frac{\partial \gamma_{j}}{\partial t} (t) \]
for any $1 \leq j \leq n$, it follows that the vanishing orders satisfy
\[ \nu_{0} (h_1 (\gamma_{\circ} (t)) ) = k <  \nu_{0} (h_j (\gamma_{\circ} (t)) ) \]
for any $2 \leq j \leq n$. This implies 
\[ \lambda:= \frac{\partial h_{1}}{\partial x_{1}} ({\bf 0}) \neq 0 = \frac{\partial h_{j}}{\partial x_{1}} ({\bf 0}) \]
for any $2 \leq j \leq n$. Therefore $\lambda$ is a non-vanishing eigenvalue of $D_{0} X$ corresponding to the 
eigenvector $(1,0, \ldots, 0)$.
 \end{proof}
\begin{proof}[proof of Theorem \ref{teo:characterization}]
It suffices to show the necessary condition by Lemma \ref{lem:suff}.
Since the foliation ${\mathcal F}$ is $(\mathrm{CI})_{\mathcal{O}}$ by Theorem \ref{teo:exist_cod1} and
$\mathrm{spec} (D_{0} X) \neq \{ 0 \}$ by Proposition \ref{pro:non-nil},  it follows that $X$ is analytically conjugated 
to a vector field of the form $(1+  g(x_1, \hdots, x_{n})) X_0$, where 
\[ X_{0} = \sum_{j=1}^{n} \lambda_{j} x_{j}   \frac{\partial}{\partial x_j}   \]
and $g$ belongs to the maximal ideal of ${\mathcal O}_{n}$ by a theorem of Zhang 
\cite[Theorem 1.3]{Zhang:ana_norm2}
(cf. also \cite{Llibre-Pantazi-Walcher:first_integrals, Bruno:analytical_form} for related results), so we can assume $X=X_{0}$.
Moreover, the result of Zhang also implies that the set
\[ M_{\lambda}:= \left\{ (k_1, \ldots, k_n) \in {\mathbb Z}_{\geq 0}^{n} : \sum_{j=1}^{n} k_j \lambda_j = 0 \right\}  \]
has rank $n-1$ (cf. also the proof of Proposition \ref{pro:form_conj}). Therefore, we obtain
$(\lambda_1, \ldots, \lambda_{n}) \in {\mathbb Z}^{*}$ up to multiplication by a non-vanishing complex number.
Note that $\lambda_{j} \neq 0$ for any $1 \leq j \leq n$ since ${\mathcal F}$ has an isolated singularity at the origin.

Suppose $\lambda_1, \ldots, \lambda_{r} \in {\mathbb Z}^{-}$ and 
$\lambda_{r+1}, \ldots, \lambda_{n} \in {\mathbb Z}^{+}$ for some $0 \leq r \leq n$.
Thus
\[ \{ x_{r+1} = \ldots = x_{n} =0\} \cup  \{ x_1 = \ldots = x_{r} = 0 \} \]   
 is the union of all separatrices. Since
there exists an isolated separatrix by hypothesis, it follows that 
$r=1$ or $r=n-1$. Up to replacing $X$ with $-X$ we can suppose $r=n-1$.
\end{proof}

\appendix \section{Linearization via total holonomy group}

The goal of this appendix is providing an alternative proof of Theorem  \ref{teo:characterization}, 
and more precisely of the necessary condition, 
in which every holomorphic first integral is obtained via invariant functions of the total holonomy group 
 of a TCI vector field.

First, we will see that Proposition \ref{pro:non-nil} and techniques of jordanization of vector fields show that the foliation is formally conjugated to a 
completely integrable linear foliation (Proposition \ref{pro:form_conj}). This reduces the problem to show the next result:

\begin{theorem}[{\cite[Llibre-Pantazi-Walcher]{Llibre-Pantazi-Walcher:first_integrals}}]
\label{teo:formal_to_analytic}
Let $\mathcal{F}$ be a germ of one-dimensional  foliation in $\cn{n}$ ($n \geq 2$).
Then $\mathcal{F}$
is analytically conjugated to the foliation $\mathcal{F}_{X}$ given by a germ of vector field of the form 
\[ X= \sum_{j=1}^{n} \lambda_{j} x_j \frac{\partial}{\partial x_j}, \ \  
\lambda_1, \ldots, \lambda_{n-1} \in {\mathbb Z}^{-}, \ \ \lambda_{n} \in {\mathbb Z}^{+},   \] 
if and only ${\mathcal F}$ is formally conjugated to ${\mathcal F}_{X}$.
\end{theorem}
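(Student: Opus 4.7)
The ``only if'' direction is immediate, so I concentrate on the converse and assume $\mathcal{F}$ is formally conjugated to $\mathcal{F}_X$. The plan is to produce analytic invariant subvarieties from the hyperbolic splitting of $\mathrm{spec}(D_0 X)$, use the formal conjugation to identify the total holonomy group along the isolated separatrix with a finite cyclic analytic group, and then invoke the construction of subsection \ref{subsec:cons} to obtain analytic first integrals from which the analytic conjugation can be built.

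First I would exploit the splitting $\mathrm{spec}(D_0 X)=\{\lambda_1,\ldots,\lambda_{n-1}\}\sqcup\{\lambda_n\}$ into clusters lying in disjoint half-planes of $\mathbb{C}$: the classical holomorphic invariant manifold theorem produces an analytic smooth isolated separatrix $\Gamma$ tangent to the $\lambda_n$-eigenvector and, when $n\geq 3$, an analytic smooth invariant hypersurface $S_\infty$ tangent to the sum of the remaining eigenspaces. The same hyperbolic splitting, together with a computation analogous to the one in the proof of Lemma \ref{lem:suff}, gives transversality of $\mathcal{F}$ with sufficiently small spheres at points of $\Gamma$. Combined with Propositions \ref{prop:closedleaves} and \ref{pro:finite}, applied via the formal first integrals pulled back from $\mathcal{F}_X$, this shows that $\mathcal{F}$ is TCI with isolated separatrix $\Gamma$.

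Next I would compute the total holonomy group $\mathcal{H}=\mathcal{H}_{\mathcal{F},\Gamma,U,T}$. Let $T$ be an analytic transverse section to $\Gamma$ at some $O\in\Gamma\setminus\{\mathbf{0}\}$ close to the origin, and let $h\in\mathrm{Diff}(T,O)$ be the analytic holonomy of a generator of $\pi_1(\Gamma\setminus\{\mathbf{0}\})$. The formal conjugation to $\mathcal{F}_X$ shows that $h$ is formally conjugate to the cyclic linear map $(z_1,\ldots,z_{n-1})\mapsto(\zeta^{\lambda_1}z_1,\ldots,\zeta^{\lambda_{n-1}}z_{n-1})$ with $\zeta=e^{2\pi i/\lambda_n}$; in particular $h^{\lambda_n}=\mathrm{id}$ formally and hence analytically, so $\mathcal{H}=\langle h\rangle$ is a finite cyclic analytic group, analytically conjugate to the diagonal model by averaging. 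The construction of subsection \ref{subsec:cons} then yields $n-1$ analytic first integrals $F_1,\ldots,F_{n-1}$ of $\mathcal{F}$ whose restrictions to $T$ are, by the diagonal description of $\mathcal{H}$, pure $\lambda_n$-th powers of linear coordinates.

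Finally I would extract the analytic conjugation from $\Gamma$, $S_\infty$ and $(F_1,\ldots,F_{n-1})$. Let $u$ be an analytic defining function of $S_\infty$; the vanishing order of $F_j$ along $S_\infty$ is $-\lambda_j$, matching the formal model, so $F_j/u^{-\lambda_j}$ is analytic and its zero locus off $S_\infty$ is an analytic hypersurface $H_j$ of multiplicity $\lambda_n$, with the factorization $F_j/u^{-\lambda_j}=G_j^{\lambda_n}$ valid on $T$ by the previous step and propagating to a neighborhood of the origin by holonomy and Riemann extension. The map $\Phi=(G_1,\ldots,G_{n-1},u)$ is then an analytic germ whose linear part is invertible, since $du$ points in the $\lambda_n$-direction and each $dG_j$ in the $\lambda_j$-direction by construction, and it conjugates $\mathcal{F}$ with $\mathcal{F}_X$. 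The main obstacle is this root extraction: the equality $F_j/u^{-\lambda_j}=G_j^{\lambda_n}$ must hold globally near the origin as an identity of analytic germs, which requires the concrete diagonal structure of $\mathcal{H}$ to ensure that $H_j$ is a single analytic hypersurface carrying all the branching of $F_j$ with uniform multiplicity.
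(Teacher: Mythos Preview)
Your proposal has a genuine gap at the step where you invoke Propositions \ref{prop:closedleaves} and \ref{pro:finite} ``applied via the formal first integrals pulled back from $\mathcal{F}_X$''. Those propositions are stated and proved for CI foliations, i.e.\ foliations with convergent meromorphic first integrals; the arguments use the analytic structure of level sets and the finiteness result \cite[Proposition 3.46]{Rib:cimpa} for diffeomorphisms preserving meromorphic functions. A formal conjugation to $\mathcal{F}_X$ only supplies \emph{formal} first integrals, and neither proposition survives in that setting. Since the whole point of the theorem is precisely to upgrade formal to analytic, assuming the CI machinery applies is circular. The same issue reappears in your holonomy computation: asserting that $h$ is formally conjugate to the diagonal model requires knowing that the formal conjugation of foliations induces one between holonomies along the respective separatrices, which is not automatic and is never justified.

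The paper closes this gap with an ingredient you do not have: Proposition \ref{pro:formal_trans} (formal transversality), which shows by an explicit iterative scheme that $\mathcal{F}$ is \emph{analytically} conjugate, for every $k$, to a vector field of the form \eqref{equ:inter} with perturbation in $(x_1,\ldots,x_{n-1})^k\cap(x_n)^k$. This analytic normal form up to arbitrary order is what drives everything downstream: it lets one compute the holonomy $\Theta$ directly (not via a formal conjugacy) and see $\Theta^{\lambda_n}=\mathrm{id}$; it gives the explicit Lyapunov-type inequality \eqref{equ:aux_dec} from which TCI is established by hand, without appealing to Propositions \ref{prop:closedleaves} or \ref{pro:finite}; and, crucially, it controls the Taylor expansion of the first integrals $F_j$ well enough to identify $F_j=f_j^{r_j}x_n^{s_j}$ with $f_j=x_j+\text{h.o.t.}$, which is exactly the root extraction you flag as the ``main obstacle'' in your last paragraph. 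Your sketch has no substitute for this control, and the factorization $F_j/u^{-\lambda_j}=G_j^{\lambda_n}$ cannot be concluded from the diagonal form of $\mathcal{H}$ alone.
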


The  proof of Theorem \ref{teo:formal_to_analytic} in \cite{Llibre-Pantazi-Walcher:first_integrals} is based on small divisor techniques in 
\cite{Bruno:analytical_form}. Here, we introduce an interplay of formal and topological techniques to generate all 
holomorphic first integrals of the foliation, leading to an alternative proof of Theorem \ref{teo:formal_to_analytic}.

\subsection{Jordanization of formal vector fields}
 First, let us introduce the Jordan decomposition of formal vector fields, 
 a useful tool that we will use to understand the properties 
 of non-degenerated singularities of $1$-dimensional  
 $(\mathrm{CI})_{\mathcal{O}}$ foliations. Given
 $X \in \Xf{}{n}$, it can be written in a unique
 way in the form 
\[ X = X_{S} + X_{N},  \] 
where $X_{S} \in \Xf{}{n}$ is semisimple, 
i.e. of the form 
\[ X_{S} = \sum_{j=1}^{n} \lambda_{j} x_{j} \frac{\partial}{\partial x_j} \]
in some formal coordinates, $X_{N} \in \Xf{}{n}$ 
is nilpotent (i.e. $D_{0} X$ is nilpotent) and $[X_{S}, X_{N}]=0$ (cf. \cite{MarJ}).  
The decomposition is particularly simple in the ``formal" CI case.

\begin{lemma}
\label{lem:jordan}
Let $X \in \Xf{}{n}$. 
Suppose there exist $f_1, \ldots, f_{n-1} \in \hat{K}_{n}$ 
(cf. Definition \ref{def:series})
such that $d f_{1} \wedge \ldots \wedge d f_{n-1} \not \equiv 0$ and
$X(f_{j})=0$ for any $1 \leq j <n$. 
Then $X_S$ and $X_N$ are linearly dependent 
over $\hat{K}_{n}$. 
Moreover, we have $X= X_{S}$ if there exist $f \in \hat{\mathcal O}_{n} \setminus \{ {\bf 0} \}$
and $\lambda \in {\mathbb C}^{*}$ such that $X(f) = \lambda f$.
\end{lemma}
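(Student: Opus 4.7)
The strategy is to combine the weight decomposition induced by $X_{S}$ with the topological nilpotency of $X_{N}$, bootstrapping from the holomorphic case to the meromorphic one via an ideal-preservation argument. In formal coordinates where $X_{S}=\sum_{j=1}^{n}\lambda_{j}x_{j}\partial_{x_{j}}$ is diagonal, decompose $\hat{\mathcal O}_{n}=\bigoplus_{c}V_{c}$ into $X_{S}$-weight spaces $V_{c}=\{h:X_{S}(h)=ch\}$; since $[X_{S},X_{N}]=0$ the nilpotent part $X_{N}$ preserves each $V_{c}$, and its restriction is topologically nilpotent because $D_{0}X_{N}$ is nilpotent on every finite-dimensional graded piece $V_{c}\cap\mathbb{C}[x]_{k}$. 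For a holomorphic first integral $h\in\hat{\mathcal O}_{n}$, write $h=\sum_{c}h_{c}$; the equation $0=X(h)=\sum_{c}(ch_{c}+X_{N}(h_{c}))$ decouples by weight and yields $X_{N}(h_{c})=-ch_{c}$, so inspecting the lowest-degree homogeneous part of $h_{c}$ forces $h_{c}=0$ whenever $c\neq 0$. Hence $h=h_{0}$ and $X_{S}(h)=X_{N}(h)=0$ for every $h\in\ker X\cap\hat{\mathcal O}_{n}$.

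To pass to $\hat{K}_{n}$, write a meromorphic first integral as $f_{j}=p_{j}/q_{j}$ with $\gcd(p_{j},q_{j})=1$ in the UFD $\hat{\mathcal O}_{n}$; the identity $X(f_{j})=0$ combined with coprimality yields $X(p_{j})=\alpha_{j}p_{j}$ and $X(q_{j})=\alpha_{j}q_{j}$ for a common $\alpha_{j}\in\hat{\mathcal O}_{n}$. The derivation $X$ preserves the principal ideal $(p_{j})$, and because on each finite-dimensional quotient $\hat{\mathcal O}_{n}/\mathfrak{m}_{n}^{k}$ the semisimple part $X_{S}$ is a polynomial in $X$ without constant term (classical Jordan--Chevalley), $X_{S}$ also preserves $(p_{j})$. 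Therefore $X_{S}(p_{j})=\beta_{j}p_{j}$ and $X_{S}(q_{j})=\beta'_{j}q_{j}$ for some $\beta_{j},\beta'_{j}\in\hat{\mathcal O}_{n}$, and $X_{S}(f_{j})=(\beta_{j}-\beta'_{j})f_{j}$. A short computation using $[X,X_{S}]=0$ shows $X(\beta_{j})=X_{S}(\alpha_{j})=X(\beta'_{j})$, so $\beta_{j}-\beta'_{j}\in\ker X\cap\hat{\mathcal O}_{n}$ and, by the preceding step, $X_{S}(\beta_{j}-\beta'_{j})=0$.

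The crux is showing $\beta_{j}=\beta'_{j}$. Consider the holomorphic $1$-form $\omega_{j}^{*}=q_{j}\,dp_{j}-p_{j}\,dq_{j}=q_{j}^{2}\,df_{j}$; a direct calculation gives $L_{X}\omega_{j}^{*}=2\alpha_{j}\omega_{j}^{*}$, so $X$ preserves the rank-one $\hat{\mathcal O}_{n}$-submodule $\hat{\mathcal O}_{n}\omega_{j}^{*}\subset\ker i_{X}$, and the same Jordan--Chevalley argument produces $\delta_{j}\in\hat{\mathcal O}_{n}$ with $L_{X_{S}}\omega_{j}^{*}=\delta_{j}\omega_{j}^{*}$. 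Expanding $L_{X_{S}}\omega_{j}^{*}$ directly yields
\[
L_{X_{S}}\omega_{j}^{*}=(\beta_{j}+\beta'_{j})\omega_{j}^{*}+p_{j}q_{j}\,d(\beta_{j}-\beta'_{j}),
\]
so setting $\mu_{j}=\delta_{j}-\beta_{j}-\beta'_{j}$ one has $p_{j}q_{j}\,d(\beta_{j}-\beta'_{j})=\mu_{j}\omega_{j}^{*}$. Applying $i_{X_{S}}$ to this identity, using $X_{S}(\beta_{j}-\beta'_{j})=0$ on the left and $i_{X_{S}}\omega_{j}^{*}=p_{j}q_{j}(\beta_{j}-\beta'_{j})$ on the right, gives $\mu_{j}p_{j}q_{j}(\beta_{j}-\beta'_{j})=0$ and hence $\mu_{j}(\beta_{j}-\beta'_{j})=0$ in the integral domain $\hat{\mathcal O}_{n}$. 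Either $\beta_{j}=\beta'_{j}$ and we are done, or $\mu_{j}=0$ and then $d(\beta_{j}-\beta'_{j})=0$, so $\beta_{j}-\beta'_{j}$ is a constant in $\mathbb{C}$. A leading-order analysis of the relations $X(p_{j})=\alpha_{j}p_{j}$ and $X_{S}(p_{j})=\beta_{j}p_{j}$ identifies the lowest-degree homogeneous part of $p_{j}$ as a simultaneous eigenvector of $D_{0}X$ and $D_{0}X_{S}$ with respective eigenvalues $\alpha_{j}(0)$ and $\beta_{j}(0)$; since $D_{0}X_{N}=D_{0}X-D_{0}X_{S}$ is nilpotent and has $0$ as its only eigenvalue, $\alpha_{j}(0)=\beta_{j}(0)$, and symmetrically $\alpha_{j}(0)=\beta'_{j}(0)$, so that constant vanishes and $X_{S}(f_{j})=0$ in every case. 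Performing this for each $j$ shows $X_{S}$ is tangent to the rank-one foliation $\mathcal F$, hence $X_{S}=hX$ for some $h\in\hat{K}_{n}$ and $X_{N}=(1-h)X$, establishing linear dependence over $\hat{K}_{n}$. The form-theoretic step together with the leading-order comparison is the main technical obstacle.

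For the moreover clause, apply the weight decomposition to $X(f)=\lambda f$: one gets $X_{N}(f_{c})=(\lambda-c)f_{c}$ for each weight $c$, and topological nilpotency forces $f_{c}=0$ for $c\neq\lambda$, so $f=f_{\lambda}$ is weight $\lambda$ and $X_{N}(f)=0$. If $X_{S}=0$ then $X=X_{N}$ cannot have a nonzero eigenvalue by topological nilpotency, contradicting $\lambda\in\mathbb{C}^{*}$; hence $X_{S}\neq 0$, and writing $X_{N}=gX_{S}$ with $g\in\hat{K}_{n}$ from the first part yields $0=X_{N}(f)=gX_{S}(f)=g\lambda f$, forcing $g=0$. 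Thus $X_{N}=0$ and $X=X_{S}$.
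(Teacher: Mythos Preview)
Your proof is correct and takes a genuinely different, more hands-on route than the paper's. The paper observes that the formal diffeomorphisms preserving $f_1,\ldots,f_{n-1}$ form a pro-algebraic group whose Lie algebra is $\mathfrak{g}=\{Z:Z(f_j)=0\ \forall j\}$; Chevalley's theorem then forces $X_S,X_N\in\mathfrak{g}$, and since $df_1\wedge\cdots\wedge df_{n-1}\not\equiv 0$ makes $\mathfrak{g}$ a rank-one $\hat K_n$-module, linear dependence follows in two lines. Your argument instead replays the Jordan--Chevalley polynomial property directly on the ideals $(p_j)$, $(q_j)$ and on the rank-one submodule $\hat{\mathcal O}_n\,\omega_j^*\subset\Omega^1$, then uses the identity $L_{X_S}\omega_j^*=(\beta_j+\beta_j')\omega_j^*+p_jq_j\,d(\beta_j-\beta_j')$ together with $i_{X_S}$ and a leading-order comparison to pin down $\beta_j=\beta_j'$. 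What the paper's approach buys is brevity and a clean conceptual picture (the result is an instance of a general fact about Lie algebras of pro-algebraic groups, for which it cites outside references); what yours buys is self-containment, since you avoid those references and make every step explicit---at the cost of a considerably longer argument. Your treatment of the ``moreover'' clause is also slightly more careful: you verify $X_S\neq 0$ before writing $X_N=gX_S$, whereas the paper simply asserts $X_S(f)=\lambda f$ and $X_N(f)=0$ as a known property of the Jordan decomposition and proceeds from there.
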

\begin{proof}
The group 
\[ G = \{ \phi \in \diffh{}{n} : f_{j} \circ \phi = f_j \ \forall 1 \leq j < n \} \]
is pro-algebraic, i.e. it is the projective limit of a sequence of linear algebraic groups \cite[Remark 3.28]{Rib:cimpa}. 
Its Lie algebra ${\mathfrak g}$ satisfies
\begin{equation}
\label{equ:lag}
 {\mathfrak g} = \{ Z \in \Xf{}{n} :  Z(f_{j})=0 \ \forall 1 \leq j < n \} . 
\end{equation}
Chevalley's theorem  implies that the Lie algebra contains the semisimple and nilpotent parts of
their elements \cite[Proposition 2.5]{JR:finite}. Thus, we obtain 
\[ X_{S}(f_j) = X_{N} (f_j) = 0 \ \  \forall 1 \leq j < n . \] 
Since $d f_{1} \wedge \ldots \wedge d f_{n-1} \not \equiv 0$, 
given $Y \in {\mathfrak g} \setminus \{0\}$, 
any formal vector field $Z$
in ${\mathfrak g}$ is of the form $g Y$ where $g \in \hat{K}_{n}$.

By construction of the Jordan decomposition of $X$ and since $f$ is an eigenvector of $X$, 
we obtain $X_{S}(f) =\lambda f$ and $X_{N}(f) =0$. 
Note that $X_{N} = g X_{S}$, 
where $g \in \hat{K}_{n}$, by the first part of 
the proof and thus we obtain $g=0$, $X_{N}=0$ and $X=X_S$.
\end{proof}  

Let us begin the proof of the necessary condition in Theorem \ref{teo:characterization}. 
Next result is a consequence of Theorem 1.3 of \cite{Zhang:ana_norm2}. We include a simple proof for the 
sake of completeness.

\begin{proposition}
\label{pro:form_conj}
Let ${\mathcal F}$ be a one-dimensional germ of TCI foliation in $\cn{n}$ with an isolated separatrix. 
Then ${\mathcal F}$ is formally conjugated to the foliation given by a vector field of the form \eqref{equ:normal_form}.
\end{proposition}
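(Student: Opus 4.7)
The plan is to combine the Jordan decomposition of formal vector fields with the $(\mathrm{CI})_{\mathcal{O}}$-hypothesis to reduce $X$ to a formal multiple of its semisimple part, and then to extract the signs of the eigenvalues from the isolated separatrix condition. The argument breaks into three stages: reducing to $X_S$, identifying the eigenvalues as nonzero integers, and pinning down their signs.

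First, by Theorem \ref{teo:exist_cod1} there exist holomorphic first integrals $f_1,\ldots,f_{n-1}$ with $df_1\wedge\cdots\wedge df_{n-1}\not\equiv 0$, and by Proposition \ref{pro:non-nil} the linear part $D_0 X$ is non-nilpotent, so $X_S\neq 0$. Lemma \ref{lem:jordan} then yields $X_N=h\,X_S$ for some $h\in\hat{K}_n$, hence $X=(1+h)X_S$ as formal meromorphic vector fields. Since two formal vector fields that are $\hat{K}_n$-proportional define the same formal foliation, $X_S$ also induces ${\mathcal F}$. As $X_S$ is semisimple, in suitable formal coordinates
\[ X_S=\sum_{j=1}^n \lambda_j\,x_j\,\frac{\partial}{\partial x_j}, \]
and the proposition reduces to identifying the $\lambda_j$.

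Second, the isolated singularity of ${\mathcal F}$ at $\mathbf{0}$ translates into the zero set $\{P:\lambda_j P_j=0\ \forall j\}$ of $X_S$ being $\{\mathbf{0}\}$, forcing $\lambda_j\neq 0$ for every $j$. A formal power series annihilated by the diagonal operator $X_S$ is supported on the resonant monomials $x^{\alpha}$ with $\sum_j \alpha_j\lambda_j=0$; since $f_1,\ldots,f_{n-1}$ are also first integrals of $X_S$, they yield $n-1$ functionally independent elements of the ring generated by such monomials, which in turn forces the lattice
\[ M_\lambda=\Bigl\{\alpha\in\mathbb{Z}^n:\sum_{j=1}^n \alpha_j\lambda_j=0\Bigr\} \]
to have rank $n-1$. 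Equivalently, the $\lambda_j$ span a one-dimensional $\mathbb{Q}$-subspace of $\mathbb{C}$; after rescaling $X_S$ (which preserves ${\mathcal F}$) we may assume $\lambda_1,\ldots,\lambda_n\in\mathbb{Z}^{*}$.

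The hardest step is pinning down the signs via the isolated separatrix. The existence of a non-constant holomorphic first integral forces $M_\lambda\cap\mathbb{Z}_{\geq 0}^n\neq\{0\}$, so the $\lambda_j$ cannot all share the same sign; set $J=\{j:\lambda_j>0\}$, so $1\leq|J|\leq n-1$. Analyzing the linear flow of $X_S$ one sees that the leaf through $(a_1,\ldots,a_n)$ has $\mathbf{0}$ in its closure if and only if $\{j:a_j\neq 0\}$ is entirely contained in $J$ or entirely in $J^c$, since only in these cases can $\mathrm{Re}(\lambda_j t)\to -\infty$ be arranged simultaneously for every $j$ with $a_j\neq 0$ along a single real direction of $t\in\mathbb{C}$. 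If both $|J|\geq 2$ and $|J^c|\geq 2$, this produces two positive-dimensional families of separatrices, inside $\{x_j=0:j\in J^c\}$ and $\{x_j=0:j\in J\}$ respectively; every coordinate axis is then a limit of leaves from one such family, so no separatrix is isolated, contradicting the hypothesis. Hence $|J|=1$ or $|J^c|=1$, and replacing $X$ by $-X$ if necessary we obtain $\lambda_n\in\mathbb{Z}^+$ and $\lambda_1,\ldots,\lambda_{n-1}\in\mathbb{Z}^-$, which is the required formal normal form.
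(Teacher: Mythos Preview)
The first two stages are essentially the paper's argument and are correct. The genuine gap is in the third stage. You analyze the separatrices of the linear vector field $X_0=\sum_j\lambda_j x_j\,\partial/\partial x_j$, but the hypothesis that $\mathcal F$ has an isolated separatrix refers to the original \emph{analytic} foliation, and at this point you have only a \emph{formal} conjugacy between $\mathcal F$ and $\mathcal F_{X_0}$. A formal conjugacy carries no topological information about leaves: it does not tell you that the separatrix set of $\mathcal F$ coincides with that of $X_0$, nor that the property ``isolated separatrix'' transfers. Your sentence ``analyzing the linear flow of $X_S$'' already mixes categories, since $X_S$ is a formal object with no honest flow in the original coordinates.

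The paper supplies the missing analytic bridge. Because the eigenvalues $\lambda_j$ are nonzero reals, the holomorphic stable/unstable manifold theorem produces genuine analytic invariant submanifolds $S^-$, $S^+$ for $\mathcal F$, of dimensions $r$ and $n-r$. The restrictions $\mathcal F_{|S^\pm}$ are formally linearizable with linear part in the Poincar\'e domain, hence \emph{analytically} linearizable; this shows that every leaf of $\mathcal F$ inside $S^-\cup S^+$ is a separatrix of the actual $\mathcal F$, and that $S^-\cup S^+$ is the full separatrix set of $\mathcal F$. Only with this in hand does the existence of an isolated separatrix force $r\in\{1,n-1\}$. Without invoking these convergence results your argument establishes the desired sign pattern only for the model $X_0$, not for $\mathcal F$ itself.
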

\begin{proof}
The foliation ${\mathcal F}$ is $(\mathrm{CI})_{\mathcal{O}}$ by Theorem \ref{teo:exist_cod1}.
Let $f = (f_{1}, \ldots, f_{n-1})$ be the complete holomorphic first integral of ${\mathcal F}$ constructed in 
subsection \ref{subsec:cons}. Consider the   Jordan decomposition 
$X = X_{S} + X_{N}$ of $X$.  
Note that $X_{S} \neq 0$ by Proposition \ref{pro:non-nil} 
since the eigenvalues of $D_0 X$ and $X_S$ coincide. It is of the form 
$X_{0} := \sum_{j=1}^{n} \lambda_{j} x_{j} \frac{\partial}{\partial x_j}$
in some formal coordinates that we fix from now on.
Since $X$ is a multiple of $X_{S}$ by Lemma \ref{lem:jordan}, 
${\mathcal F}$ is formally conjugated to the foliation given by $X_0$.

If there is just one non-vanishing eigenvalue of $D_{0} X$  
then ${\mathcal F}_{X}$ is  regular, contradicting the hypothesis. 
If the number $k$ of non-vanishing eigenvalues satisfies $1 < k < n$, we have that 
the singularity of $X$ is non-isolated, again providing a contradiction. Thus, we obtain 
$\lambda_{j} \neq 0$ for any $1 \leq j \leq n$. Now, 
consider the set 
\begin{equation}
\label{equ:semigroup}
 M_{\lambda} := \left\{ (k_1, \ldots, k_n) \in {\mathbb Z}_{\geq 0}^{n} : \sum_{j=1}^{n} k_j \lambda_j = 0 \right\} . 
 \end{equation}
Any monomial $\underline{x}^{\underline{k}} =x_{1}^{k_1} \ldots x_{n}^{k_n}$
that has a non-zero coefficient in $f_j$ for some $1 \leq j \leq n-1$ satisfies 
$X_{S}(\underline{x}^{\underline{k}})=0$ and hence $\underline{k} \in M_{\lambda}$. 
We claim that the ${\mathbb Q}$-vector space generated by $M_{\lambda}$ has dimension $n-1$. 
Otherwise, there is a vector field 
\[ Y = \sum_{j=1}^{n} \mu_{j} x_{j} \frac{\partial}{\partial x_j} \] 
such that $\underline{\mu}$ is not a complex multiple of $\underline{\lambda}$ and 
$\underline{k} \underline{\mu} =0$ for any $\underline{k} \in M_{\lambda}$.
In particular, we obtain $Y \in {\mathfrak g}$ (cf. Equation \eqref{equ:lag}) but this is impossible since $Y$ is not a multiple of $X_S$.
Now, $\dim_{\mathbb Q} (M_{\lambda} \otimes_{\mathbb Z} {\mathbb Q} ) =n-1$ implies  
$(\lambda_1, \ldots, \lambda_{n}) \in {\mathbb Z}^{*}$ up to multiplication by a non-vanishing complex number.

Suppose $\lambda_1, \ldots, \lambda_{r} \in {\mathbb Z}^{-}$ and 
$\lambda_{r+1}, \ldots, \lambda_{n} \in {\mathbb Z}^{+}$ for some $0 \leq r \leq n$.
There exist stable and unstable manifolds $S^{-}$ and $S^{+}$ of ${\mathcal F}$ that we can assume 
\begin{equation}
\label{equ:ss}
 S^{-} = \{ x_{r+1} = \ldots = x_{n} =0\} \ \  \mathrm{and} \ \ S^{+} =  \{ x_1 = \ldots = x_{r} = 0 \}, 
 \end{equation}   
up to a tangent to the identity change of coordinates. Since ${\mathcal F}_{|S^{-}}$ and  ${\mathcal F}_{|S^{-}}$ are formally linearizable foliations
whose linear part at the origin belong to  the Poincar\'{e} domain, both 
${\mathcal F}_{|S^{-}}$ and  ${\mathcal F}_{|S^{-}}$ are analytically linearizable. 
Therefore, $S^{-} \cup S^{+}$ is the union of all separatrices. Since 
there exists an isolated separatrix by hypothesis, it follows that 
$r=1$ or $r=n-1$. Up to replacing $X$ with $-X$ we can suppose $r=n-1$.
\end{proof}
\begin{remark}
\label{rem:ss}
 We have
$S^{-} = \{ x_{n} = 0 \}$ and $S^{+} = \cap_{1  \leq j < n} \{ x_j = 0 \}$. Up to multiply $X$ by a function of the fom 
$1 + h$, where $h$ belongs to the maximal ideal of ${\mathcal O}_{n}$, we can assume $X(x_n) = \lambda_n x_n$. Since ${\mathcal F}_{|S^{-}}$
is analytically linearizable, we can assume 
\[ ({\mathcal F}_{X})_{|S^{-}} = {\mathcal F}_{\sum_{1 \leq j \leq n-1} \lambda_{j} x_{j} \frac{\partial}{\partial x_j}} \]
up to a change of coordinates $({\bf g} (x_1, \hdots, x_{n-1}), x_n) \in \diff{}{n}$.
\end{remark}
So, as a consequence of Proposition \ref{pro:form_conj}, and in order to prove Theorem \ref{teo:characterization}, it suffices to show 
Theorem \ref{teo:formal_to_analytic}.

 \subsection{Formal transversality}

Denote 
$X_0 = \sum_{j=1}^{n} \lambda_{j} x_{j} \frac{\partial}{\partial x_j}$.  
First let us see that ${\mathcal F}$ is formally conjugated to ${\mathcal F}_{X_{0}}$ by a formal diffeomorphism that 
is transversally formal (cf. \cite[Chapter VI]{Ban}) along $S^{-} \cup S^{+}$ (see Equation \eqref{equ:ss}).
This type of approach was used in \cite{Chaves:holonomy_equivalence} to improve the theorems
that relate analytic conjugacy of holonomy maps and analytic conjugacy of certain singularities of vector fields
 \cite{Eli-Ilya}  (cf. \cite{Reis:equivalence}).
 
In order to read this subsection it is not necessary to know what formal transversality is. Proposition \ref{pro:formal_trans}
is the unique result that will be used in the following.
\begin{proposition}
\label{pro:formal_trans}
Given $k \in {\mathbb Z}_{\geq 1}$, the foliation ${\mathcal F}$ is analytically conjugated to a foliation given by a vector field of the form 
\begin{equation}
\label{equ:inter}
X = \sum_{j=1}^{n-1} (\lambda_{j} x_{j} + f_{j}) \frac{\partial}{\partial x_j} + \lambda_{n} x_{n} \frac{\partial}{\partial x_{n}} 
\end{equation}
where $f_{j} \in (x_1, \ldots, x_{n-1})^{k} \cap (x_{n})^{k}$ for any $1 \leq j <n$.
\end{proposition}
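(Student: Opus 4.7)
The plan is to argue by induction on $k \in \mathbb{Z}_{\geq 1}$. Write $I = (x_1, \ldots, x_{n-1})$ and $J = (x_n)$. At stage $k$ the goal is to exhibit analytic coordinates, together with an analytic representative of $\mathcal{F}$, in which the vector field $X$ takes the form \eqref{equ:inter} with $f_j \in I^k \cap J^k$ for every $1 \leq j < n$.

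For the base case $k=1$, I would start from Remark \ref{rem:ss}: after the initial analytic change of coordinates and unit adjustment supplied there, one has $X(x_n) = \lambda_n x_n$ and $X|_{S^-}$ agrees with $X_0|_{S^-}=\sum_{j<n}\lambda_j x_j\partial_{x_j}$ (up to a further unit with leading term $1$ on $S^-$, which can be absorbed while keeping the $x_n$-component linear to the required order). This forces $f_j \in J$. The invariance of the isolated separatrix $S^+ = \{x_1 = \cdots = x_{n-1} = 0\}$ then forces the $\partial_{x_j}$-components of $X$ to vanish along $S^+$, whence $f_j \in I$ and therefore $f_j \in I \cap J$.

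For the inductive step $k \to k+1$, I would look for an analytic change of variables $\phi(y) = y + \psi(y)$ with $\psi_n \equiv 0$ (which automatically preserves $X(x_n) = \lambda_n x_n$) and $\psi_j \in I^k \cap J^k$ for $j < n$, together with a compatible unit multiplication of $X$, so as to kill the monomials of $f_j$ at the boundary bidegrees (those $x^\alpha$ with $|\alpha'|=k$ or $\alpha_n=k$, where $\alpha'=(\alpha_1,\ldots,\alpha_{n-1})$). The leading effect of $\phi$ on $X - X_0$ is the commutator $[X_0, \psi \cdot \partial]$, so the problem reduces termwise to the cohomological equation
\[ X_0(\psi_j) - \lambda_j \psi_j \equiv - f_j \pmod{I^{k+1} \cap J^{k+1}}, \quad 1 \leq j < n, \]
i.e.\ the division $c_\alpha^\psi = c_\alpha^f \big/ \bigl(\sum_i \alpha_i \lambda_i - \lambda_j\bigr)$ for each monomial $x^\alpha$ at the relevant bidegrees.

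Two features drive the proof. First, the formal conjugacy of $\mathcal{F}$ to $\mathcal{F}_{X_0}$ provided by Proposition \ref{pro:form_conj} implies inductively that the only monomials of $f_j$ which cannot be killed by such a $\psi_j$ are foliation-resonant ones of the form $c \, x_j x^\beta$ with $\sum_i \beta_i \lambda_i = 0$; these can be absorbed into an analytic unit multiplying $X$, and because the relevant first-integral monomials $x^\beta$ occurring lie in $I\cap J$, the unit adjustment does not disturb $X(x_n)=\lambda_n x_n$ beyond the target order. Second, for non-resonant $\alpha$ the denominator $\sum_i \alpha_i \lambda_i - \lambda_j$ is a nonzero integer, hence at least $1$ in absolute value, so the formal $\psi_j$ has coefficients dominated by those of $f_j$ and therefore converges in the same polydisc. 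Since $\psi, f \in I^k \cap J^k$, the higher-order remainder $[\psi\cdot\partial, f\cdot\partial]$ and the iterated commutators all land in $I^{k+1} \cap J^{k+1}$, so the transformed vector field has the required form. I expect the main delicate point to be the coordinated bookkeeping between the change of coordinates and the unit adjustment, ensuring that the elimination of foliation-resonant terms is compatible with preserving $X(x_n) = \lambda_n x_n$ modulo $I^{k+1} \cap J^{k+1}$; this ultimately rests on the arithmetic of the integer exponents $(\lambda_1,\ldots,\lambda_n)$ and the fact that $(\lambda_1,\ldots,\lambda_{n-1})$ and $\lambda_n$ have opposite signs.
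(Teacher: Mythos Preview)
Your approach shares the inductive framework and the cohomological equation with the paper's proof, but it diverges at the crucial point of handling resonances, and there the proposal has a genuine gap.

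The paper does not attempt to absorb resonant terms into a unit. Instead, it first exploits Lemma \ref{lem:jordan} (together with the normalization $X(x_n)=\lambda_n x_n$ from Remark \ref{rem:ss}) to conclude $X=X_S$, i.e.\ $X$ is formally conjugated to $X_0$ \emph{as a vector field}, not merely as a foliation. This permits an analytic (polynomial) change of coordinates making $j^{k'}X=X_0$ for some fixed $k'\gg k$, so that every remaining perturbation term has total degree $\geq k'+1$. The bidegree induction is then organized as ``type $(r,s)\to(r+1,s)$'' and ``$(r,s)\to(r,s+1)$'': in the first kind of step one freezes the $x_n$-degree at $r\leq k$, and the monomials in play have $I_+$-degree at least $k'+1-r$. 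Hence the denominator $\lambda_j-\sum_{l<n}k_l\lambda_l-r\lambda_n$ is dominated by the large positive term $\sum_{l<n}k_l|\lambda_l|$ and is bounded below by $1$. Resonances are thus avoided entirely; no unit absorption is needed. (The passage $(1,1)\to(1,2)$ is handled separately via Fuchsian theory, again using $k'\gg1$.)

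Your proposed handling of resonances does not work as stated. First, resonant monomials $x^\alpha$ with $\sum_i\alpha_i\lambda_i=\lambda_j$ need not be of the form $x_jx^\beta$ with $\beta\in M_\lambda$: for instance with $(\lambda_1,\lambda_2,\lambda_3)=(-1,-2,3)$ the monomial $x_1^5x_3$ is resonant for $\partial_{x_2}$ and lies in $I^5\cap J$, yet has $\alpha_2=0$. The formal conjugacy from Proposition \ref{pro:form_conj} does not by itself force such cross-resonances to be absent at the boundary of $I^k\cap J^k$, because this bigraded filtration is not the one controlling the Poincar\'e--Dulac procedure. Second, even for ``diagonal'' resonances $c_j\,x_jx^\beta\,\partial_{x_j}$, a unit $u=1+v$ contributes $v\lambda_jx_j$ to the $j$-th component, so absorption would require $c_j=c\lambda_j$ for a common $c$; arbitrary coefficients cannot be absorbed this way, and each unit adjustment also disturbs $X(x_n)=\lambda_n x_n$. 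The paper's high-order truncation $j^{k'}X=X_0$ is precisely the device that sidesteps both issues.
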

\begin{proof}
We can assume ${\mathcal F} = {\mathcal F}_{X}$ where we are in the setting of Remark \ref{rem:ss}.
Since $X_0$ is $\mathrm{(CI)}_{\mathcal O}$ and $X(x_n) = \lambda_{n} x_n$, 
we obtain $X=X_{S}$ by Lemma \ref{lem:jordan}. Since $X$ is formally conjugated to $X_0$,  
we can suppose $j^{k'} X = X_0$ for some $k'>>1$, up
to an analytic change of coordinates that preserves the previous properties.
Denote $I_{-} = (x_n)$ and $I_{+} = (x_1, \ldots, x_{n-1})$.
We say that $X$ is of type $(r,s)$ if it is of the form 
\eqref{equ:inter}
with $f_{j} \in  I_{-}^{r} \cap I_{+}^{s} \cap  {\mathfrak m}^{k' + 1}$ 
for  any $1 \leq j < n$.
By Remark \ref{rem:ss},  $X$ is of type $(1,1)$.

Since a vector field of the form 
\[ X' =  \sum_{j=1}^{n-1} \left(\lambda_{j} x_{j} + \sum_{k=1}^{n-1} a_{jk} (x_{n}) x_{k} \right) \frac{\partial}{\partial x_j} + \lambda_{n} x_{n} \frac{\partial}{\partial x_{n}},   \]
where $a_{jk} \in (x_{n})^{k'}$ for all $1 \leq j,k < n$, has no resonances when $k'>>1$, it is analytically conjugated to $X_0$
by a change of coordinates of the form 
\[ \left( x_{1} + \sum_{k=1}^{n-1} b_{1k} (x_{n}) x_{k} , \ldots,  x_{n-1} + \sum_{k=1}^{n-1} b_{n-1 \, k} (x_{n}) x_{k} , x_{n} \right) \]
by the theory of Fuchsian singularities (cf. \cite[Theorems 16.15 and 16.16]{Ilya-Yako}). By considering $X'$ such that $X-X' \in I_{+}^{2}$, we obtain that the 
vector field $X$ is of type $(1,2)$ in these coordinates.

Now, let us see that we can make $X$ of type $(r+1, s)$ if $X$ is of type $(r, s)$, 
for $1 \leq r < k$ and $2 \leq s \leq k$, up
to an analytic change of coordinates. We have 
\[ f_j = x_{n}^{r} a_j (x_1, \ldots, x_{n-1}) + O(x_{n}^{r+1}) \]
where
\[ a_{j} = \sum_{\underline{k}} a_{j, \underline{k}} x_{1}^{k_1} \ldots x_{n-1}^{k_{n-1}} \in I_{+}^{s} \cap {\mathfrak m}^{k' + 1 -r} \]
for any $1 \leq j < n$. We define
\[ b_{j, \underline{k}}  = 
 \frac{- a_{j, \underline{k}} }{\lambda_j -\sum_{l=1}^{n-1} k_{j} \lambda_{j} - r \lambda_n}  . \]
Notice that $a_{j, \underline{k}} \neq 0$ implies 
$k_{1} + \ldots + k_{n-1} \geq k' + 1 -r$ and 
thus the denominator does not vanish if 
$k'>>1$. Moreover, since the modulus of the denominator is bounded by below by $1$, 
it follows that 
\[ Y := \sum_{j=1}^{n-1} \left( \sum_{\underline{k}} b_{j, \underline{k}} x_{1}^{k_1} \ldots x_{n-1}^{k_{n-1}}  \right) x_{n}^{r}  
\frac{\partial}{\partial x_j} \in \Xn{}{n}.   \]
By construction it satisfies
\[ [Y, X_0] =  - \sum_{j=1}^{n-1} a_{j} (x_1, \ldots, x_{n-1}) x_{n}^{r}  \frac{\partial}{\partial x_j} .  \]
Consider the diffeomorphism $\phi = \mathrm{exp} (Y)$ (cf. Definition \ref{def:flow}), it satisfies
\[ \phi^{*} X = X + [Y,X] + \frac{1}{2} [Y,[Y,X]] + \frac{1}{3!} [Y,[Y,[Y,X]]] + \ldots =\]
\[ X  + [Y,X_{0}] + [Y, X - X_{0}] + \frac{1}{2} [Y,[Y,X]] + \frac{1}{3!} [Y,[Y,[Y,X]]] + \ldots \]
and\ $\phi^{*} X$ is of type $(r+1, s)$. 
Finally, the transition from type $(r,s)$ to type $(r,s+1)$, for 
$1 \leq r \leq k$ and $1< s<k$, is proved analogously to 
the transition from type $(r,s)$ to type $(r+1,s)$.
\end{proof}
\begin{corollary}
${\mathcal F}$ is conjugated to ${\mathcal F}_{X_0}$ by a formal diffeomorphism that is
transversally formal along $S^{-} \cup S^{+}$.
\end{corollary}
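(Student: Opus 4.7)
The plan is to extract the formal conjugation as the transversally formal limit of a compatible sequence of analytic diffeomorphisms supplied by Proposition \ref{pro:formal_trans}. Writing $I_- = (x_n)$ and $I_+ = (x_1, \ldots, x_{n-1})$, I would re-examine the inductive proof of that proposition and verify that one can choose analytic diffeomorphisms $\phi_k \in \diff{}{n}$ with $\phi_k^{*} X$ of type $(k,k)$ and
\[ \phi_{k+1} = \phi_k \circ \psi_k, \qquad \psi_k - \mathrm{id} \in (I_{-}^{k} \cap I_{+}^{k})^{n}. \]
The initial Fuchsian-theory step that passes from type $(1,1)$ to $(1,2)$ is absorbed once and for all into $\phi_1$, and every subsequent transition is a composition with an exponential $\exp(Y)$, for $Y \in \Xn{}{n}$ whose coordinates lie in appropriate high powers of $I_-$ and $I_+$; iterating then yields the stated compatibility automatically.

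Given this compatibility, for each multi-index $\underline{\alpha}$ the coefficient of $x^{\underline{\alpha}}$ in $\phi_k$ stabilizes once $k$ exceeds a bound depending on $|\underline{\alpha}|$, and the coefficient-wise limit defines a formal diffeomorphism $\hat{\phi} \in \diffh{}{n}$ with $\hat{\phi} \equiv \phi_k \pmod{I_-^k \cap I_+^k}$ for every $k$. Since $\phi_k^{*} X - X_0$ has coefficients in $I_-^k \cap I_+^k$, the formal identity $\hat{\phi}^{*} {\mathcal F} = {\mathcal F}_{X_0}$ follows. Transversal formality along $S^{-} = \{x_n = 0\}$ is obtained by writing $\hat{\phi} = \sum_{l \geq 0} \hat{g}_l(x_1, \ldots, x_{n-1}) \, x_n^l$: stability identifies $\hat{g}_l$ with the $x_n^l$-coefficient of $\phi_m$ for any $m > l$, which is holomorphic in $(x_1, \ldots, x_{n-1})$ because $\phi_m$ is analytic. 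The symmetric expansion in powers of $x_1, \ldots, x_{n-1}$ with coefficients depending on $x_n$ gives transversal formality along $S^{+}$.

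The hard part will be establishing the compatibility condition $\psi_k - \mathrm{id} \in (I_-^k \cap I_+^k)^n$. It is not automatic from the bare statement of Proposition \ref{pro:formal_trans}, since the formal centralizer of $X_0$ is infinite-dimensional, spanned by the monomial vector fields $x^{\underline{k} + e_j} \tfrac{\partial}{\partial x_j}$ with $\underline{k} \in M_\lambda$, so a priori two independent applications of the proposition could differ by nontrivial centralizer elements and fail to match modulo $I_-^k \cap I_+^k$. The remedy is not to invoke Proposition \ref{pro:formal_trans} as a black box but to re-run its inductive construction, tracking carefully that every exponential composed along the way is tangent to the identity to the required order in both $I_-$ and $I_+$, so that the telescoping composition has the desired closeness.
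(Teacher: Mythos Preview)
Your approach is essentially the paper's: re-run the inductive construction of Proposition \ref{pro:formal_trans}, verify that each successive correction $\psi_k$ satisfies $\psi_k - \mathrm{id} \in (I_-^k \cap I_+^k)^{n}$, and take the coefficient-wise limit. The paper organizes the induction slightly differently (at each stage it composes three maps $\psi_1, \psi_2, \psi_3$, the first being a fresh high-order truncation rather than an exponential), but the substance is identical.

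One point the paper makes explicit that you gloss over: the $\phi_k$ must all be defined in a \emph{fixed} neighborhood of $S = (S^- \cup S^+) \cap {\operatorname{B}}_\epsilon$, not merely as germs at the origin with possibly shrinking domains. This is what guarantees that your coefficients $\hat g_l$ are holomorphic on all of $S^- \cap {\operatorname{B}}_\epsilon$ (and symmetrically for $S^+$), which is the content of transversal formality; convergence of the germ at $\mathbf{0}$ alone would not suffice. In the paper this is the third bullet of the inductive claim, and it follows because each $\psi_j$ fixes $S$ pointwise and the exponentials $\exp(Y)$ are entire in their defining domain. You should state and track this domain control explicitly.
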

\begin{proof}
Let us consider the notations and definitions in Proposition \ref{pro:formal_trans}.
We can suppose that $X$ is of type $(2,2)$. 
Consider 
$S:= (S^{+} \cup S^{-}) \cap {\operatorname{B}}_{\epsilon}$ for some $\epsilon >0$
so that $X$ is defined in a neighborhood of $S$.
It suffices to show that there exist  
$X_k \in \Xn{}{n}$ and 
$\phi_{k} \in \diff{}{n}$, for any $k \geq 2$, such that 
\begin{itemize}
\item $X_{2} = X$ and $X_{k+1}={(\phi_{k})}_{*} X_{k}$ for any $k \geq 2$;
\item $X_{k}$ is of type $(k, k)$ for any $k \geq 2$;
\item $X_{k}$ and $\phi_{k}$ are 
defined in a neighborhood of $S$ and
$S \subset \mathrm{Fix} (\phi_k)$ for any $k \geq 2$;
\item $\phi_{k}$ is of type $(k,k)$, i.e.  
\[ \phi_{k} (x_1, \ldots, x_n) = (x_1 + f_1 , 
\ldots, x_{n-1} + f_{n-1}, x_n), \]
where $f_1, \ldots, f_{n-1} \in I_{+}^{k} \cap I_{-}^{k}$ for any $k \geq 2$.
\end{itemize}
Then there is a unique 
$\hat{\phi} \in \diffh{}{n}$
with $j^{k-1} \hat{\phi} = j^{k-1} (\phi_{k} \circ \ldots \circ \phi_2)$
for any $k \geq 2$. Moreover, by construction $\hat{\phi}$ is transversally formal 
along $S$ and conjugates $X$ and $X_0$.

We can construct $X_{k+1}$ and $\phi_k$ by induction on $k$.
By the proof of Proposition \ref{pro:formal_trans}, 
$\phi_k$ is the composition of three local diffeomorphisms:
the first one $\psi_{1}$ gives  $j^{k'} ((\psi_{1})_{*} X_k) = X_0$
for $k'>>1$, and $\psi_2$ and $\psi_{3}$ guarantee that 
$(\psi_{2} \circ \psi_{1})_{*} X_k$ and 
$(\psi_{3} \circ \psi_{2} \circ \psi_{1})_{*} X_k$ are of types $(k+1,k)$
and $(k+1, k+1)$ respectively.
It is easy to see that 
$\psi_j$ is defined in a neighborhood of $S$,
$S \subset \mathrm{Fix} (\psi_j)$
and $\psi_{j}$ is of type $(k,k)$ for any $1 \leq j \leq 3$. 
Thus $\phi_k = \psi_{3} \circ \psi_{2} \circ \psi_{1}$ is the desired diffeomorphism.
\end{proof}

\subsection{Generating first integrals}

Finally, we get to the topological part of the proofs of Theorems  \ref{teo:characterization} and  \ref{teo:formal_to_analytic}.
We will compute the total holonomy group at a transverse section $T = \{x_{n} = c_0\}$ for some $c_0 \in {\mathbb C}^{*}$. 
By considering well chosen invariant functions by the action of the total holonomy group, we obtain a natural candidate for
the analogue for ${\mathcal F}$ of the 
the complete first integral  $(x_{1}^{\lambda_{n}} x_{n}^{-\lambda_{1}}, \ldots, x_{n-1}^{\lambda_{n}} x_{n}^{-\lambda_{n-1}})$ 
of ${\mathcal F}_{X_{0}}$. Formal transversality, and more precisely Proposition \ref{pro:formal_trans},
allows us to control such first integrals in the neighborhood of $x_{n}=0$.

Consider the holonomy map $\Theta$ associated to the isolated separatrix of ${\mathcal F}$. 
It is the holonomy map associated to 
the path $\gamma:[0,1] \to S^{+} \setminus \{ {\bf 0} \}$ defined by 
$\gamma (t) = (0, \ldots, 0, c_0 e^{2 \pi i t})$. The next result is a consequence of the formal transversality property 
provided by Proposition \ref{pro:formal_trans}.
\begin{lemma}
The holonomy map $\Theta$ associated to the isolated separatrix of ${\mathcal F}$ has finite order.
\end{lemma}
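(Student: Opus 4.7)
The plan is to reduce the claim to a comparison of $\Theta$ with the holonomy of the linear normal form $X_{0} = \sum_{j=1}^{n} \lambda_{j} x_{j} \partial/\partial x_{j}$ around its separatrix $S^{+} = \bigcap_{j < n} \{x_{j} = 0\}$. For $X_{0}$ the explicit flow $x_{j}(s) = x_{j} e^{\lambda_{j} s}$ shows that lifting $\gamma$ to the leaf through $(x_{1}, \ldots, x_{n-1}, c_{0})$ corresponds to complex time $s = 2\pi i/\lambda_{n}$, so the holonomy on $T = \{x_{n} = c_{0}\}$ is the diagonal linear map
\[ L(x_{1}, \ldots, x_{n-1}) = (\mu_{1} x_{1}, \ldots, \mu_{n-1} x_{n-1}), \qquad \mu_{j} = e^{2\pi i \lambda_{j}/\lambda_{n}}. \]
Each ratio $\lambda_{j}/\lambda_{n}$ is rational with denominator dividing $\lambda_{n}$, so $L^{\lambda_{n}} = \mathrm{id}$, and the goal is to transfer this finiteness to $\Theta$ by exploiting Proposition \ref{pro:formal_trans}.

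Fix $k \in \mathbb{Z}_{\geq 1}$. By Proposition \ref{pro:formal_trans}, up to an analytic change of coordinates $\Phi_{k}$ I may assume that $\mathcal{F} = \mathcal{F}_{X_{k}}$, where
\[ X_{k} = X_{0} + R_{k}, \qquad R_{k} = \sum_{j=1}^{n-1} f_{j}^{(k)} \frac{\partial}{\partial x_{j}}, \qquad f_{j}^{(k)} \in (x_{1}, \ldots, x_{n-1})^{k} \cap (x_{n})^{k}. \]
Let $\Theta_{k}$ denote the holonomy of $\mathcal{F}_{X_{k}}$ on $T$ along $\gamma$; note that $S^{+}$ is still a separatrix of $X_{k}$ and $\gamma \subset S^{+}$. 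Because $R_{k}$ vanishes to order $k$ in the transverse variables and the $X_{0}$-trajectories starting at $(x, c_{0})$ with small $|x|$ remain, over the bounded complex time $s \in [0, 2\pi i/\lambda_{n}]$, within a tubular neighborhood of $\gamma$ on which $|R_{k}| \leq C|x|^{k}$, a Gronwall-type comparison between the flows of $X_{0}$ and $X_{k}$ should produce
\[ \Theta_{k}(x) = L(x) + O(|x|^{k}). \]
Iterating $\lambda_{n}$ times and using $L^{\lambda_{n}} = \mathrm{id}$ then gives $\Theta_{k}^{\lambda_{n}}(x) = x + O(|x|^{k})$.

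The diffeomorphism $\Phi_{k}$ induces an analytic equivalence between transverse sections near the base point that conjugates $\Theta$ to $\Theta_{k}$. Since conjugation by a germ of biholomorphism fixing the origin preserves the vanishing order of $\mathrm{id} - h$ at the origin, the same estimate $\Theta^{\lambda_{n}}(y) = y + O(|y|^{k})$ follows for $\Theta$. As $k$ is arbitrary, $\Theta^{\lambda_{n}}$ has the same formal Taylor expansion as the identity, so by analyticity $\Theta^{\lambda_{n}} = \mathrm{id}$, and the order of $\Theta$ divides $\lambda_{n}$.

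The main technical point will be the Gronwall comparison $\Theta_{k}(x) = L(x) + O(|x|^{k})$: one must bound the deviation of the $X_{k}$-trajectory from the $X_{0}$-trajectory over the full loop parameter, using that the base trajectories are precompact and that the perturbation is uniformly controlled by $|x|^{k}$ there. This is a routine but non-trivial estimate that plays, in this appendix, the role that the formal-transversality corollary played in the argument of the main text.
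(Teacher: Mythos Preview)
Your proposal is correct and follows essentially the same strategy as the paper: apply Proposition~\ref{pro:formal_trans} for each $k$, compare the holonomy with the linear holonomy $L$ to get $\Theta^{\lambda_{n}} = \mathrm{id} + O(|x|^{k})$, and use the analytic invariance of the order of tangency with the identity to conclude $\Theta^{\lambda_{n}} = \mathrm{id}$.

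The one point where the paper is slightly more direct concerns the estimate you call ``the main technical point''. Because the normal form in Proposition~\ref{pro:formal_trans} has $X(x_{n}) = \lambda_{n} x_{n}$ \emph{exactly}, the holonomy along $\gamma$ is literally the restriction to $\{x_{n}=c_{0}\}$ of the time-$2\pi i/\lambda_{n}$ flow of $X$, i.e.\ $\Theta = \mathrm{exp}\bigl(\tfrac{2\pi i}{\lambda_{n}} X\bigr)\big|_{x_{n}=c_{0}}$. Since the perturbation terms $f_{j}^{(k)}$ lie in the ideal $I_{+}^{k}=(x_{1},\ldots,x_{n-1})^{k}$, which is stable under the flow, one reads off $\Theta_{j} - e^{2\pi i \lambda_{j}/\lambda_{n}} x_{j} \in I_{+}^{k}$ directly, bypassing the Gronwall comparison you sketch. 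Your Gronwall route is fine (and works just as well), but the exact flow description makes the ``routine but non-trivial'' step entirely algebraic.
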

\begin{proof}
We can suppose that ${\mathcal F}= {\mathcal F}_{X}$ where $X$ is of the form \eqref{equ:inter}
where $f_{j} \in (x_1, \ldots, x_{n-1})^{k} \cap (x_{n})^{k}$ for any $1 \leq j <n$ and some $k \in {\mathbb Z}_{\geq 2}$ by 
Proposition \ref{pro:formal_trans}.
The holonomy map $\Theta$ is of the form 
\[ \Theta = (\Theta_1, \hdots, \Theta_{n-1}) (x_1, \ldots, x_{n-1}) =
{ \mathrm{exp} \left( \frac{2 \pi i}{\lambda_{n}} X \right)}_{| x_{n}=c_0}   \]
and satisfies 
$\Theta_{j} - e^{\frac{2 \pi i \lambda_j}{\lambda_{n}}} x_j \in I_{+}^{k}$ for any $1 \leq j <n$.
Note that $\Theta^{\lambda_n}$ is tangent to the identity and its order of tangency with the identity is at least $k$. 
Since $k$ is arbitrary and the order of 
tangency with the identity is an analytic invariant, we deduce that $\Theta$ has finite order. 
In particular, we obtain $\mathrm{exp}(2 \pi i X) = \mathrm{id}$.
\end{proof}
Fix  a transverse section $T = \{x_{n} = c_0 \}$, with $0 < c_0 < \epsilon <<1$,  and let $\Gamma$ be the $x_{n}$-axis.  
Next, we describe the total holonomy group (cf. Definition \ref{def:total}).
\begin{lemma}
${\mathcal F}$ is topologically completely integrable and 
its total holonomy group ${\mathcal H}_{{\mathcal F}, \Gamma, U, T}$ is equal to the finite group $\langle \Theta \rangle$.
\end{lemma}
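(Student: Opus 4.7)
The plan is to combine the analytic near-linearization provided by Proposition \ref{pro:formal_trans} with the finite-order property of $\Theta$ just established, and then carry out explicit flow computations. With $X$ in the form \eqref{equ:inter} for $k$ large, ${\mathcal F}$ is a small analytic perturbation of ${\mathcal F}_{X_{0}}$, which is TCI by Lemma \ref{lem:suff}, and both the topological and the group-theoretic assertions will follow from the global $2\pi i$-periodicity of $X$.

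First I would verify topological complete integrability. Since $X(x_n)=\lambda_{n}x_{n}$ and the coefficients $f_{j}$ lie in $(x_{1},\ldots,x_{n-1})^{k}\cap (x_{n})^{k}$, both $S^{-}=\{x_{n}=0\}$ and $\Gamma=\bigcap_{j<n}\{x_{j}=0\}$ are $X$-invariant and contain all one-dimensional separatrices. Transversality of $X$ to $\partial \operatorname{B}_{\epsilon}$ along the separatrices follows by evaluating $X(\rho)$ on them with $\rho=\sum_{j}|x_{j}|^{2}$: the leading part $2\sum_{j}\lambda_{j}|x_{j}|^{2}$ is strictly positive on $\Gamma\setminus\{\mathbf{0}\}$ and strictly negative on the separatrices lying in $S^{-}\setminus\{\mathbf{0}\}$, dominating the higher-order perturbation for small $\epsilon$. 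The previous lemma further gives $\exp(2\pi i X)=\mathrm{id}$, so the flow of $X$ on $\operatorname{B}_{\epsilon_{0}}$ is globally periodic and every orbit of $X$, equivalently every leaf of ${\mathcal F}$, is biholomorphic to a cylinder $\mathbb{C}^{*}$. Such leaves are closed analytic subsets of $\operatorname{B}_{\epsilon_{0}}\setminus\{\mathbf{0}\}$ and carry finite (cyclic) holonomy groups, which yields TCI.

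Next I would compute the total holonomy group. One inclusion is direct. For $P\in U\cap T$ close to $O$, the path $\gamma(t)=\exp(tX)(P)$ with $t\in[0,\,2\pi i/\lambda_{n}]$ lies in the leaf of ${\mathcal F}$ through $P$ and ends at $\Theta(P)$; since $X$ is a small perturbation of $X_{0}$ and the linearized path $\exp(tX_{0})(P)$ keeps the first $n-1$ coordinates at constant modulus (the exponents $\lambda_{j}t$ being purely imaginary) while $x_{n}$ traces a circle of radius $c_{0}$, the path $\gamma$ stays in $U$, giving $\Theta\in \mathcal{H}_{\mathcal{F},\Gamma,U,T}$ and $\langle \Theta \rangle\subset \mathcal{H}_{\mathcal{F},\Gamma,U,T}$. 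For the reverse inclusion, given $\Psi\in \mathcal{H}_{\mathcal{F},\Gamma,U,T}$ and $P$ near $O$, the point $\Psi(P)\in \mathfrak{L}_{P}^{\epsilon_{0}}\cap T$ is an intersection of the $X$-orbit of $P$ with $T$, so $\Psi(P)=\exp(tX)(P)$ for some $t\in \mathbb{C}$; the relation $X(x_{n})=\lambda_{n}x_{n}$ forces $c_{0}e^{\lambda_{n}t}=c_{0}$ and hence $t\in (2\pi i/\lambda_{n})\mathbb{Z}$, so $\Psi(P)\in \{\Theta^{k}(P):k\in \mathbb{Z}\}$. Continuity of $\Psi$ together with connectedness of a small neighborhood of $O$ in $T$ then forces the integer $k$ to be independent of $P$, giving $\Psi=\Theta^{k}$ and $\mathcal{H}_{\mathcal{F},\Gamma,U,T}\subset \langle \Theta \rangle$.

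The main obstacle is making the TCI verification airtight in the presence of the residual perturbation in \eqref{equ:inter}; the decisive observation is that $\exp(2\pi i X)=\mathrm{id}$ converts the finite-order property of $\Theta$ on $T$ into a global periodicity of the $X$-flow on $\operatorname{B}_{\epsilon_{0}}$, which automatically makes every leaf a $\mathbb{C}^{*}$-cylinder that is closed off the origin and has finite holonomy, and the same periodicity of the flow underlies the explicit identification of $\mathcal{H}_{\mathcal{F},\Gamma,U,T}$ with $\langle \Theta \rangle$ in the second half of the argument.
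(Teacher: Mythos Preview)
Your use of the identity $\exp(2\pi i X)=\mathrm{id}$ is a nice organizing principle, and your computation of the reverse inclusion $\mathcal{H}\subset\langle\Theta\rangle$ via $x_{n}\circ\exp(tX)=e^{\lambda_{n}t}x_{n}$ is more direct than what the paper does. However, there is a genuine gap in your verification of TCI, and it propagates into the second half of the argument.

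The step ``the flow is $2\pi i$-periodic, so every leaf is a $\mathbb{C}^{*}$-cylinder and hence a closed analytic subset of $B_{\epsilon_{0}}\setminus\{\mathbf{0}\}$'' does not hold as stated. Periodicity tells you only that the \emph{full orbit} through $P$ is an immersed image of $\mathbb{C}/2\pi i\mathbb{Z}$; the leaf $\mathcal{L}_{P}^{\epsilon}$ is the connected piece of that orbit lying in the ball, an open subset of $\mathbb{C}^{*}$, and there is no general reason for it to be closed in $B_{\epsilon}\setminus\{\mathbf{0}\}$. What is actually needed is a properness statement: as $\mathrm{Re}(t)\to -\infty$ the trajectory must leave the ball rather than accumulate on $S^{-}$. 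The paper obtains precisely this from the Lyapunov-type inequality
\[
\mathrm{Re}(X)\Bigl(\ln|\underline{x}^{-}|^{2}+\tfrac{m}{4\lambda_{n}}\ln|x_{n}|^{2}\Bigr)<0,
\]
which forces $|\underline{x}^{-}|$ to blow up as $|x_{n}|\to 0$ along a real trajectory; your sketch offers no substitute. A related omission is that you never construct the $\mathfrak{F}^{\epsilon}$-invariant neighborhood $U$ required in the very definition of the total holonomy group. The paper builds it as $U_{W}$ by flowing a boundary neighborhood $W$ of $\Gamma\cap\partial B_{\epsilon}$ backward along $\mathrm{Re}(X)$, again using the Lyapunov inequality to see that each such trajectory reaches $\partial B_{\epsilon}$ near $S^{-}$ in finite time. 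Without $U$ in hand, your inclusion $\Theta\in\mathcal{H}$ (which needs the path $t\mapsto\exp(tX)(P)$, $t\in[0,2\pi i/\lambda_{n}]$, to stay in $U$) is not grounded, and your reverse inclusion tacitly assumes that any two points of $\mathfrak{L}_{P}^{\epsilon_{0}}\cap T$ are joined by the $X$-flow inside the domain, which is again the missing closedness/properness statement.

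Once $U$ and closedness are in place, your computation of $\mathcal{H}$ is a valid alternative to the paper's argument, which instead deforms an arbitrary leaf-path forward along $\mathrm{Re}(X)$ into $W\subset\partial B_{\epsilon}$ and reads off that the resulting holonomy lies in $\langle\Theta\rangle$.
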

\begin{proof}
We can suppose that $X$ is of the form \eqref{equ:inter} by Proposition \ref{pro:formal_trans} for some $k \geq 2$.
Denote $m = \min (-\lambda_1, \ldots, -\lambda_{n-1}) $,   $\underline{x}^{-} = (x_1, \ldots, x_{n-1})$
and $\underline{x} = (\underline{x}^{-} , x_{n})$.
We have
\begin{equation}
\label{equ:aux_tr}
X ( |x_{n}|^{2}  ) = \lambda_{n}  |x_{n}|^{2} \ \mathrm{and} \   
X (|\underline{x}^{-}|^{2}) = \sum_{j=1}^{n-1} \lambda_j |x_{j}|^{2} + O(|\underline{x}^{-}|^{3}) 
\end{equation}
and as a consequence we get
\begin{equation}
\label{equ:aux_dec}
\mathrm{Re} (X) \left( \ln |\underline{x}^{-}|^{2}  + \frac{m}{4 \lambda_{n}} \ln |x_{n}|^{2} \right) <0 . 
\end{equation}
Thus $|\underline{x}^{-}||x_{n}|^{\frac{4}{m \lambda_{n}}} \circ \mathrm{exp} (t X)$ 
is a decreasing function of $t \in {\mathbb R}$.

Consider the foliation ${\mathcal F}_{\partial}$ induced by ${\mathcal F}$ in $\partial {\operatorname{B}}_{\epsilon}$.
There exists an open ${\mathcal F}_{\partial}$-invariant neighborhood $W$ of 
$\Gamma \cap  \partial {\operatorname{B}}_{\epsilon}$ in $\partial {\operatorname{B}}_{\epsilon}$ 
since $\Theta$ has finite order. 
All leaves of ${\mathcal F}_{\partial}$ in $W$ are closed. 
From now on, we consider that $W$ is a sufficiently small neighborhood of 
$\Gamma \cap  \partial {\operatorname{B}}_{\epsilon}$.

Let $P \in W \setminus \Gamma$. Let $c_{P}$ be the minimum real positive number $c$ such that 
$\mathrm{exp}(- c X) (P) \not \in {\operatorname{B}}_{\epsilon}$.
Property \eqref{equ:aux_dec} implies that $c_{P}$ is well-defined and, moreover, that 
$A_{P}:=\mathrm{exp}(- c_{P} X) (P)$ is in a small neighborhood of $S^{-} \cap \partial {\operatorname{B}}_{\epsilon}$ 
The real flow of $X$ is transverse to $\partial {\operatorname{B}}_{\epsilon}$
at both $P$ and $A_{P}$ by Property \eqref{equ:aux_tr}. Thus $c_{P}$ depends continuously on $P \in W \setminus \Gamma$. 
Consider a leaf ${\mathcal L}_{\partial}$ of ${\mathcal F}_{\partial}$ through $P \in W \setminus \Gamma$.
The previous discussion implies that
\[ {\mathfrak L}_{P}^{\epsilon} = \cup_{Q \in {\mathcal L}_{\partial}} \{ \mathrm{exp}(t X) (Q) : t \in [-c_{Q},0] \}. \] 
is a closed leaf of ${\mathfrak F}^{\epsilon}$ for any $P \in W \setminus \Gamma$.
In particular, the set 
\[ U_{W} = (S^{-} \cap  \overline{\operatorname{B}}_{\epsilon}) \cup \cup_{P \in W} {\mathfrak L}_{P}^{\epsilon} \]
is an open ${\mathfrak F}^{\epsilon}$-invariant neighborhood of $S^{-} \cup S^{+}$ in 
$\overline{\operatorname{B}}_{\epsilon}$
such that $X$ is transverse to $\partial {\operatorname{B}}_{\epsilon}$ at $U_{W} \cap \partial {\operatorname{B}}_{\epsilon}$.
By considering any open ${\mathcal F}_{\partial}$-invariant neighborhood of 
$\Gamma \cap  \partial {\operatorname{B}}_{\epsilon}$ in $\partial {\operatorname{B}}_{\epsilon}$, we obtain 
a fundamental system of ${\mathcal F}^{\epsilon}$-invariant neighborhoods of $S^{-} \cup S^{+}$
in $\overline{\operatorname{B}}_{\epsilon}$. Denote  $U:=U_{W}$.


Any path $\gamma:[0,1] \to {\mathfrak L}_{P}^{\epsilon} \cap U$, with $P \not \in S^{-}$, 
can be deformed continuously in ${\mathfrak L}_{P}^{\epsilon}$
into a path $\overline{\gamma}:[0,1] \to {\mathfrak L}_{P}^{\epsilon} \cap W$.
Indeed given any $s \in [0,1]$, there exists $t_{s} \in {\mathbb R}_{\geq 0}$ such that 
$\mathrm{exp}(t X) (\gamma (s)) \in U \setminus W$ for any $0 \leq t < t_s$ and 
$\mathrm{exp}(t_{s} X) (\gamma (s)) \in W$. The transversality of $X$ with $\partial {\operatorname{B}}_{\epsilon}$ in 
$U$ implies that $s \mapsto t_s$ is continuous in $[0,1]$. The path $\overline{\gamma}$ is defined by 
$\overline{\gamma} (s) = \mathrm{exp}(t_s X) (\gamma (s))$ for $s \in [0,1]$.
Therefore,  the holonomy group of any leaf is finite since it can be considered as a subgroup of $\langle \Theta \rangle$.
Moreover, the total holonomy group ${\mathcal H}_{\mathcal F, \Gamma, U, T}$ coincides with $\langle \Theta \rangle$ 
and hence is finite.
We obtain that the foliation ${\mathcal F}$ is topologically completely integrable. 
\end{proof}
\subsubsection{End of the proof of Theorem \ref{teo:formal_to_analytic}}
Suppose $\gcd (\lambda_1, \ldots, \lambda_n) =1$ without lack of generality.
We can assume that  $X$ is of the form \eqref{equ:inter}  for some $k >>1$ fixed by Proposition \ref{pro:formal_trans}.
Given $1 \leq j < n$, we define 
\[ r_j = \lambda_n / \gcd (\lambda_j, \lambda_n) \in \mathbb{Z}^{+} \ \ \mathrm{and} \ \  
s_j = -\lambda_j / \gcd (\lambda_j, \lambda_n) \in \mathbb{Z}^{+}. \] 
The function $g_j = x_{j}^{r_j} x_{n}^{s_j}$ is a first integral of 
$X_0$ for any $1 \leq j < n$.

The holonomy map $\Theta$ is of the form $\Theta = (\Theta_1, \hdots, \Theta_{n-1})$
where $\Theta_{j} - e^{\frac{2 \pi i \lambda_j}{\lambda_{n}}} x_j \in I_{+}^{k}$ for any $1 \leq j <n$ and 
$I_{+}=(x_1, \ldots, x_{n-1})$. It has order $\lambda_{n}$. It can be linearized in $\diff{}{n-1}$
by a map of the form 
\[ G = \frac{1}{\lambda_{n}} \sum_{j=0}^{\lambda_{n}-1} (D_{0} \Theta)^{-j} \circ \Theta^{j}. \]
Indeed, we obtain $D_0 G = \mathrm{id}$, $D_{0} \Theta \circ G = G \circ \Theta$ and 
$G- \mathrm{id} \in I_{+}^{k} \times \ldots \times  I_{+}^{k}$.

Fix $1 \leq j < n$.
The equality $x_{j}^{r_j} \circ D_{0} \Theta = x_{j}^{r_j}$ implies
\[ (x_{j}^{r_j} \circ G) \circ \Theta = x_{j}^{r_j} \circ G . \] 
Note that $x_{j}^{r_j} \circ G - x_{j}^{r_j} \in I_{+}^{k}$.
Since $\langle \Theta \rangle$ is the total holonomy group of ${\mathcal F}$, we can construct a first integral
$F_j$ of $X$ in a neighborhood of ${\bf 0}$ such that $(F_{j})_{|T} \equiv  c_{0}^{s_{j}} (x_{j}^{r_j} \circ G)$. It satisfies
\[ F_{j} = x_{j}^{r_j} a_{j} (x_{n}) + h_j (x_1, \hdots, x_n) \]
where $a_{j} \not \equiv 0$ and $h_j \in I_{+}^{k}$ for any $1 \leq j < n$. Moreover, by construction 
$F_{j}$ is of the form $f_{j}^{r_j} x_{n}^{d_j}$ where $f_{j}  \in {\mathcal O}_{n}$ is irreducible and 
$d_{j} \in {\mathbb Z}_{\geq 0}$.
Since $r_{j} < k$, all non-vanishing monomials $x_{j}^{r_j} x_{n}^{d}$ of $x_{j}^{r_j} a_{j} (x_{n})$ are 
resonant, i.e. $r_j \lambda_j + d \lambda_n =0 $ and thus $d = s_j$. In particular, we obtain $a_j \equiv x_{n}^{s_j}$. 

Consider a non-vanishing monomial $x_1^{k_1} \ldots x_{n}^{k_n}$ of $h_j$. If $k_n \geq k$ then $k_n > s_j$.
Otherwise the monomial is resonant, i.e. $k_1 \lambda_1 + \ldots + k_n \lambda_n = 0$, 
since $X$ is of the form \eqref{equ:inter}. But this implies 
\[ k_n = \sum_{l=1}^{n-1} -\frac{\lambda_l}{\lambda_n} k_l  \geq (k_1 + \ldots + k_{n-1})
\min_{1 \leq l < n} \frac{-\lambda_{l}}{\lambda_{n}}  \geq k \min_{1 \leq l < n} \frac{-\lambda_{l}}{\lambda_{n}} >s_j . \]
We deduce
\[ F_{j} = f_{j}^{r_j} x_{n}^{d_j} = (x_{j}^{r_{j}} + h.o.t.) x_{n}^{s_j} \]
and hence $d_{j} = s_{j}$ and $f_{j} = x_j + h.o.t$. 

Resuming, $(y_1, \hdots, y_n) = (f_1, \ldots, f_{n-1}, x_n)$ is a system of coordinates in which 
$y_{j}^{r_j} y_{n}^{s_j}$ is a first integral of ${\mathcal F}$ for any $1 \leq j <n$. 
Therefore ${\mathcal F}$ is induced by the vector field $\sum_{j=1}^{n} \lambda_j y_j \frac{\partial}{\partial y_j}$.
\begin{remark}
The semigroup $M_{\lambda}$ (cf. Equation \eqref{equ:semigroup}) has a finite generator system $S_{\lambda}$.
Considering $X$ of the form  \eqref{equ:inter}  for some $k >>1$ fixed, we obtain the first integral  
$f_{1}^{k_{1}} \hdots f_{n-1}^{k_{n-1}} x_{n}^{k_{n}}$of ${\mathcal F}$  by applying the previous construction to the 
$\langle \Theta \rangle$-invariant function $(x_{1}^{k_1} \hdots x_{n-1}^{k_{n-1}} c_{0}^{k_{n}}) \circ G$
for any $(k_1, \hdots, k_{n}) \in S_{\lambda}$. Thus, the construction provides the ring
\[ {\mathbb C} \{ f_{1}^{k_{1}} \hdots f_{n-1}^{k_{n-1}} x_{n}^{k_{n}} : (k_1,\hdots,k_n) \in S_{\lambda} \} \]
of holomorphic first integrals of ${\mathcal F}$.
\end{remark}
\subsubsection{End of the proof of the necessary condition in Theorem  \ref{teo:characterization}}
It is an immediate consequence of Proposition \ref{pro:form_conj} and Theorem  \ref{teo:formal_to_analytic}.

\bibliography{rendu.bib}
\end{document}